\documentclass[a4paper,12pt]{preprint}
\usepackage[margin=1.3in]{geometry}  % set the margins to 1in on all sides
\usepackage[full]{textcomp}

\usepackage{mathalfa}
\usepackage{ulem}
\usepackage{microtype}
\usepackage{url}
\usepackage[shortlabels]{enumitem}

\usepackage{booktabs}

\usepackage{enumitem}
\usepackage{amsmath}
\usepackage{amsfonts} 
\usepackage{amssymb}             % for blackboard bold, etc

\usepackage{comment}

\usepackage{mathrsfs}
\usepackage{booktabs}
\usepackage{mathtools}

\usepackage{tikz}
\usepackage{amsthm}                % better theorem environment
\usepackage{mhequ}
\usepackage{hyperref}

\usepackage{bm}

\hypersetup{pdfstartview=}

\addtocontents{toc}{\protect\hypersetup{hidelinks}}

\DeclareSymbolFont{sfoperators}{OT1}{ptm}{m}{n}
\DeclareSymbolFontAlphabet{\mathsf}{sfoperators}

\makeatletter
\def\operator@font{\mathgroup\symsfoperators}
\makeatother

\newcommand{\triplenorm}[1]{%
  \left\vert\!\left\vert\!\left\vert
  #1
  \right\vert\!\right\vert\!\right\vert}

\numberwithin{equation}{section}

\newtheorem{thm}{Theorem}[section]
\newtheorem{defn}[thm]{Definition}
\newtheorem{lem}[thm]{Lemma}
\newtheorem{prop}[thm]{Proposition}

\newtheorem{assumption}[thm]{Assumption}
\newtheorem{postulation}[thm]{Postulation}
\theoremstyle{remark}

\newtheorem{rmk}[thm]{Remark}

\DeclareMathOperator{\id}{id}

\makeatletter
\def\th@newremark{\th@remark\thm@headfont{\bfseries}}

\def\bdiamond{\mathop{\mathpalette\bdi@mond\relax}}
\newcommand\bdi@mond[2]{%
	\vcenter{\hbox{\m@th
			\scalebox{\ifx#1\displaystyle 2.6\else1.8\fi}{$#1\diamond$}%
	}}%
}

\def\bDiamond{\mathop{\mathpalette\bDi@mond\relax}}
\newcommand\bDi@mond[2]{%
	\vcenter{\hbox{\m@th
			\scalebox{\ifx#1\displaystyle 2.6\else1.2\fi}{$#1\Diamond$}%
	}}%
}
\makeatletter

\definecolor{darkgreen}{rgb}{0.1,0.7,0.1}
\definecolor{darkred}{rgb}{0.7,0.1,0.1}
\definecolor{darkblue}{rgb}{0,0,0.7}
\newcommand{\NN}{\mathbb{N}}
\newcommand{\RR}{\mathbb{R}}      % for Real numbers
\renewcommand{\SS}{\mathbb{S}}

\newcommand{\WW}{\mathbb{W}}
\newcommand{\XX}{\mathbb{X}}

\newcommand{\aA}{\mathcal{A}}
\newcommand{\bB}{\mathcal{B}}
\newcommand{\cC}{\mathcal{C}}
\newcommand{\dD}{\mathcal{D}}
\newcommand{\eE}{\mathcal{E}}
\newcommand{\fF}{\mathcal{F}}
\newcommand{\gG}{\mathcal{G}}
\newcommand{\hH}{\mathcal{H}}

\newcommand{\lL}{\mathcal{L}}
\newcommand{\mM}{\mathcal{M}}
\newcommand{\nN}{\mathcal{N}}
\newcommand{\oO}{\mathcal{O}}

\newcommand{\rR}{\mathcal{R}}
\newcommand{\sS}{\mathcal{S}}
\newcommand{\tT}{\mathcal{T}}
\newcommand{\uU}{\mathcal{U}}
\newcommand{\vV}{\mathcal{V}}

\newcommand{\xX}{\mathcal{X}}
\newcommand{\yY}{\mathcal{Y}}

\makeatletter % Guarantees same font as sin, cos, etc. Don't know why they don't show up in Times though...
\newcommand{\pa}{{\operator@font pa}}
\newcommand{\cov}{{\operator@font cov}}
\renewcommand{\div}{{\operator@font div}}
\newcommand{\var}{{\operator@font var}}
\newcommand{\corr}{{\operator@font corr}}
\newcommand{\diam}{{\operator@font diam}}
\newcommand{\Av}{{\operator@font Av}}
\newcommand{\trig}{{\operator@font trig}}
\newcommand{\Sym}{{\operator@font Sym}}
\newcommand{\Enh}{{\operator@font Enh}}
\newcommand{\EEnh}{\overline {\operator@font Enh}}
\makeatother

\newcommand{\W}{\mathbf{W}}
\newcommand{\X}{\mathbf{X}}

\newcommand{\n}{\mathbf{n}}

\newcommand{\emL}{\ensuremath{\mathscr{L}}}

\newcommand{\emC}{\ensuremath{\mathscr{C}}}
\newcommand{\sft}{\mathsf{T}}

\newcommand{\eps}{\varepsilon}
\newcommand{\md}{\mathrm{d}}
\renewcommand{\d}{\partial}

\newcommand{\dist}{\operatorname{dist}}

\newcommand{\bracket}[1]{\langle #1 \rangle}

\usepackage{amsmath}
\allowdisplaybreaks[4]

\title{Generalised stochastic curvature flow in $d \geq 2$, and sharp interface limit for the stochastic Allen-Cahn equation with nonlinear diffusion}

\author{Weijun Xu$^1$ \quad Shuhan Zhou$^2$}
\institute{Institute for Theoretical Sciences, Westlake University, China.\\\email{xuweijun@westlake.edu.cn}
\and School of Mathematical Sciences, Peking University, China.\\\email{zhoushuhan@stu.pku.edu.cn
}
}

\begin{document}
\maketitle

\begin{abstract}
We construct the local-in-time solution of the generalised anisotropic direction-dependent curvature flow in dimension $d \geq 2$ forced by a white-in-time and smooth-in-space Gaussian noise. This seems to be the first construction with a white-in-time noise which also allows spatial dependence, even in the simpler case of isotropic stochastic mean curvature flow. The main difficulty is that the stochastic PDE describing the flow has a multiplicative noise depending nonlinearly on both the solution and its gradient. The key technique is a transform developed in \cite{BKMZ20} based on rough characteristics that removes this rough multiplicative term. We also illustrate the relationship of this transform with previously known special situations. 

As an application of the construction, we show that in a short time interval, the sharp interface limit of the stochastic Allen-Cahn equation with nonlinear diffusion and the same noise (slightly smoothened in time) is given by the above direction-dependent stochastic curvature flow. 
\end{abstract}

\setcounter{tocdepth}{2}
% \microtypesetup{protrusio. n=false}
\tableofcontents
% \microtypesetup{protrusion=true}

\section{Introduction and main result}

Fix $d \geq 2$, and $\Omega \subset \RR^d$ be an open, connected domain with smooth boundary. The aim of the article is to study the sharp interface limit (as $\eps \rightarrow 0$) of the stochastic Allen-Cahn equation with nonlinear diffusion and Neumann boundary condition, given by
\begin{equation} \label{SAC}
\left\{
    \begin{aligned}
 \partial_t u_\eps &=\div \big( D(u_\eps) \nabla u_\eps \big) +\eps^{-2} f(u_\eps) +\eps^{-1} \xi_\eps\;, \quad &&(t,x) \in \RR^+\times \Omega\;,\\
\frac{\partial u_\eps}{\partial \nu} &=0\;, \quad &&(t,x) \in \RR^+\times \partial \Omega\;.\\
\end{aligned}\right.
\end{equation}
Throughout, we make the following assumptions on the diffusion matrix $D$, the reaction term $f$ and noise $\xi_\eps$. 

\begin{assumption} \label{ass:assumption_overall}
    \begin{enumerate}
        \item The diffusion matrix $D: \RR \rightarrow \RR^{d \times d}$ is symmetric and smooth. Furthermore, there exists $\Lambda > 1$ such that 
        \begin{equation*}
            \Lambda^{-1} \leq \eta^\sft \, D(s) \, \eta \leq \Lambda
        \end{equation*}
        for all $s \in \RR$ and $\eta \in \RR^d$ with $|\eta|=1$. 

        \item The reaction $f: \RR \rightarrow \RR$ is smooth. It has exactly three roots: $f(-1) = f(x_0) = f(1) = 0$ for some $x_0 \in (-1,1)$, and satisfy
        \begin{equation} \label{eq:cond_f_bistable}
        f'(\pm  1) < 0\;, \qquad f'(x_0) > 0\;.
        \end{equation}
        \item Equipotential condition: 
        \begin{align} \label{eq:cond_Df_equi}
        \int_{-1}^{1} D_{ij}(s) f(s) \,\md s = 0 \qquad \text{for every} \quad 1 \leq i,j \leq d\;.
        \end{align}
        \item The noise $\xi_\eps = \xi_\eps(t,x)$ has the form 
        \begin{equation} \label{eq:cond_noise_form}
        \xi_\eps(t,x) = \sum_{j=1}^m \sigma_j(x) \cdot \frac{\md W^{j;\eps}_t}{\md t}
        \end{equation}
        where $\sigma_j \in \cC_c^\infty (\Omega)$, and $W^{j;\eps}$ are smooth approximations of independent standard Brownian motions $W^{j}$'s such that for some $\kappa \in (0,\frac{1}{3})$, one has \footnote{This can be achieved by taking any $\kappa'<\kappa$ and letting $W^{j;\eps}$ be mollification of $W^j$ at scale $\eps^{\kappa'}$.}
        \begin{equation} \label{eq:cond_BM_approximation}
        \sup_{t \in [0,T]} \left|\frac{\md}{\md t} W_t^{j; \eps} \right| \lesssim \eps^{-\kappa}\;, \qquad \sup_{t \in [0,T]} \left|\frac{\md^2}{\md t^2} W_t^{j; \eps} \right|\lesssim \eps^{-2 \kappa}
        \end{equation}
        almost surely. The proportionality constants depend on $T$ and realisation of $W$ but are independent of $\eps$. 
    \end{enumerate}
\end{assumption}

Several simplified versions of \eqref{SAC} have been studied before. The case with $D = \id$ and $\xi_\eps \equiv 0$ corresponds to the standard Allen-Cahn equation. It is well known that it has a sharp interface limit given by the (deterministic) mean curvature flow (see for example \cite{ESS92}). 

The version of \eqref{SAC} with general nonlinear diffusion matrix $D$ (but still $\xi_\eps \equiv 0$) arises naturally as the hydrodynamical limit of a class of non-gradient type Glauber-Kawasaki dynamics (\cite{Funaki_particle}). It was shown in \cite{FP24} that it has a sharp interface limit given by an anisotropic direction-dependent curvature flow. 

The stochastic versions of \eqref{SAC} studied before (with $D=\id$) are mainly restricted to approximately white-in-time but space-independent noises (see \cite{Fun99, Web09}). In this case, its sharp interface limit is given by the standard mean curvature flow forced by an additive white-in-time noise. The extension from space-independent to space-dependent noise has been a challenge for a while (see the explanations of the difficulties in Section~\ref{sec:difficulty}). Our main theorem provides the first such extension in the case of a white-in-time and smooth-in-space noise. 

Before we state our main result, we first introduce a few quantities. They mainly arise from the deterministic part of \eqref{SAC}, and have been introduced in \cite[Section~1]{FP24}. For $e \in \RR^d\setminus\{0\}$, define
\begin{equation}\label{e:def_a_W_lambda}
    a_e(s) := e^\sft D(s) e\;, \quad W_e(s) = -2 \int_{-1}^s a_e(\tau) f(\tau) \,\md \tau\;, \quad \lambda(e) := \int_{-1}^{1} \sqrt{W_e(s)}\,\md s\;.
\end{equation}
Note that the equipotential condition \eqref{eq:cond_Df_equi} ensures $W_e(s) \geq 0$ for $s \in [-1,1]$. We then define $b, \mu_{ij}: \RR^{d}\setminus\{0\} \rightarrow \RR$ by
\begin{equation} \label{e:mu_c_lambda}
    \begin{split}
        b(e) &:= \frac{1}{\lambda(e)} \int_{-1}^{1} a_e(s) \,\md s\;,\\
        \mu_{ij}(e) &:= \dfrac{1}{\lambda(e)} \int_{-1}^{1} \left[ D_{ij}(s) \sqrt{W_e(s)} -\dfrac{\partial_{e_i} (W_e(s))}{2} \d_{e_j} \left(
        \dfrac{ a_e(s)}{\sqrt{W_e(s)}} \right) \right]\,\md s\;.
    \end{split}
\end{equation}
We are now ready to state our main theorem. 

\begin{thm} \label{thm:main}
Consider the problem \eqref{SAC} with the noise $\xi_\eps$ as above. Let $\vV_0 \Subset \Omega$ with smooth boundary $\d \vV_0$. Then for the initial data $u_\eps(0,\cdot)$ specified in Section~\ref{sec:convergence_smcf}, almost surely, there exists a random time $\tau>0$ and a randomly evolving closed hypersurface $(\Sigma_t)_{t \in [0,\tau]}$ such that the followings hold:
\begin{itemize}
\item[(i)] $\Sigma_0 = \d \vV_0$, and $(\Sigma_t)_{0 \leq t \leq \tau}$ evolve according to a direction-dependent stochastic curvature flow, given by
\begin{equation} \label{eq:smcf_surface}
    V\,\md t= - \sum_{i,j = 1}^d \mu_{ij}(\n) \, \d_{x_i} n_j \,\md t  - b(\n) \ \bm{\sigma}(x)\,\md \W\;.
\end{equation}
Here $\W = (W, \WW)$ is the Stratonovich enhancement of the standard Brownian motion $W$, $V$ is the outward normal velocity of $\Sigma_t$, and $\n = (n_j)_j$ is the outward normal vector to $\Sigma_t$ (extended constantly along normal directions). 

\item[(ii)] There exists $\theta>0$ such that 
\begin{equation*}
    \sup_{t \in [0,\tau]} \|u_\eps(t,\cdot) - \chi_{\Sigma_t}\|_{\lL^2(\Omega)} \lesssim \eps^\theta\;.
\end{equation*}
Here, $\chi_{\Sigma}$ denotes the function taking the value $-1$ in the interior and $+1$ outside $\Sigma$. 
\end{itemize}
\end{thm}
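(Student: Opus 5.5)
The proof splits into two parts: constructing the flow \eqref{eq:smcf_surface}, then the convergence statement (ii) by a comparison/sandwich argument.

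\textbf{Step 1 (construction).} Near $\d\vV_0$ I would parametrise $\Sigma_t$ as a normal graph over the smooth reference surface $\Sigma_0$, or equivalently through a signed distance-type level set function $\phi(t,x)$. This turns \eqref{eq:smcf_surface} into a quasilinear parabolic rough SPDE of the form
\begin{equation*}
\md \phi = F(\nabla\phi,\nabla^2\phi)\,\md t + G(x,\nabla\phi)\,\md\W .
\end{equation*}
Here $G$ is built from $b(\n)\bm\sigma(x)|\nabla\phi|$, so the noise is multiplicative and depends nonlinearly on $\nabla\phi$. Following \cite{BKMZ20}, I would remove this term by composing with the flow of rough characteristics of the first order rough Hamilton--Jacobi equation $\md\psi = G(x,\nabla\psi)\,\md\W$. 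This flow is locally well-posed for short random times, because $\bm\sigma$ is smooth and $G$ is smooth away from $\nabla\phi=0$, and $\nabla\phi\ne0$ near the interface. After the transform one obtains a random PDE with coefficients that are continuous in time. It is parabolic thanks to the ellipticity of $\mu_{ij}$, which comes from Assumption~\ref{ass:assumption_overall} and the analysis in \cite{FP24}. Standard quasilinear Schauder theory then gives a smooth local solution up to a random time $\tau$. Inverting the transform yields $\Sigma_t$ solving \eqref{eq:smcf_surface}, which is (i). The same construction with $\W$ replaced by the smooth lift of $W^\eps$ gives approximating flows $\Sigma^\eps_t$, and by rough path continuity their distance to $\Sigma_t$ is at most $\eps^{\theta'}$.

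\textbf{Step 2 (sub/supersolutions).} Since $\xi_\eps$ is smooth in time, \eqref{SAC} is a PDE pathwise. Following \cite{FP24}, I would build sub- and supersolutions of the form
\begin{equation*}
u^\pm = U_{\n}\big((d^\eps(t,x)\pm \eps p(t))/\eps\big) + \eps U_1 \pm q(t).
\end{equation*}
Here $d^\eps$ is the signed distance to $\Sigma^\eps_t$ shifted by $\eps^{\theta}$, $U_e$ is the direction-dependent travelling wave profile solving $(a_e U')' + f(U)=0$, and $U_1$ is a first order corrector. Solvability of the corrector equation is what forces the velocity law $V=-\mu_{ij}\d_i n_j - b\,\bm\sigma\dot W^\eps$. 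This is the point where $b(e)$ arises: the noise term $\eps^{-1}\xi_\eps$ projected onto $U'$, divided by $\int a_e (U')^2 = \lambda(e)$.

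\textbf{Main obstacle.} The hardest step is controlling the error terms involving $\dot W^\eps\sim\eps^{-\kappa}$ and $\ddot W^\eps\sim\eps^{-2\kappa}$. These enter through the time derivative of the corrector and through curvature derivatives of $\Sigma^\eps$, which are only uniformly bounded because of Step 1's rough-path estimates. I would balance them against the $\eps^{-2}$ gain coming from $f'(\pm1)<0$ and the spectral gap of the linearised operator, using $\kappa<1/3$.

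\textbf{Conclusion.} An initial layer argument brings $u_\eps(0,\cdot)$ between $u^-$ and $u^+$. The comparison principle for \eqref{SAC} then sandwiches $u_\eps$, giving $|u_\eps-\chi_{\Sigma^\eps_t}|$ small except in an $O(\eps^{\theta})$ neighbourhood of the interface. Combining this with the estimate on the distance between $\Sigma^\eps_t$ and $\Sigma_t$ yields (ii).
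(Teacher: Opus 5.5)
\textbf{Gaps in both steps.} Both gaps come from the same missing idea: the noise has to act at the foot point $x-\rho\nabla\rho$, not at $x$.

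\textbf{Step 1.} The equation you write, $\md\phi=F\,\md t+b(\n)\bm\sigma(x)|\nabla\phi|\,\md\W$, is the level-set form of \eqref{eq:smcf_surface}, and it is not parabolic. Since $\d_{x_i}n_j=|\nabla\phi|^{-1}(\delta_{jk}-n_jn_k)\d^2_{x_ix_k}\phi$, the principal symbol is $\mu(\n)(\id-\n\otimes\n)$, which vanishes in the direction $\n$. Moreover, $\mu$ is only known to be coercive on $\n^\perp$ (Lemma~\ref{lem:ellipticity}). So no Schauder theory is available after the transform. Incidentally, the transform yields a fully nonlinear equation through $Q^\eps_t$ in \eqref{eq:Q_matrix_def}, not a quasilinear one.

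The paper restores uniform parabolicity as follows. It works with the signed distance $\rho$ and adds $M(\nabla\rho\otimes\nabla\rho):\nabla^2\rho(\id-\rho\nabla^2\rho)^{-1}$. It imposes $|\nabla\rho|^2=1$ on the moving boundary and proves $|\nabla\rho|\equiv1$ via Lemma~\ref{lem:moving_domain_maximum_principle}, so the added term vanishes. This works only because $H=b(p)\bm\sigma(x-up)$. Then $G=|p|^2-1$ satisfies \eqref{e:condition_GH_factorization} with $L=-2b(p)p^\sft\nabla\bm\sigma^\sft(x-up)$, i.e.\ the boundary condition is transported by the rough characteristics. With $\bm\sigma(x)$ instead, the left side of \eqref{e:condition_GH_factorization} equals $2b\,p^\sft\nabla\bm\sigma^\sft(x)\neq0$ on $\{|p|=1\}$, and the noise destroys the signed-distance property.

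A normal-graph parametrisation over $\Sigma_0$ would be nondegenerate. However, it is not of the form you wrote, and the characteristic transform would have to be redone on $\Sigma_0$.

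\textbf{Step 2.} The same mismatch breaks the classical $p(t)$, $q(t)$ sandwich. For the signed distance $d^\eps$ to $\Sigma^\eps_t$, $\d_t d^\eps$ sees $\bm\sigma$ at the foot point, while \eqref{SAC} is forced by $\xi_\eps(t,x)$. So after solving for $U_1$ you are left with $\eps^{-1}U'(z)\,\oO(\eps^{-\kappa}|d^\eps|)$. At $z=(d^\eps\pm\eps p)/\eps$ you only have $|d^\eps|/\eps\leq|z|+p$, so this term is $\oO\big(\eps^{-\kappa}(1+p)\big)U'(z)$. In the interface region the only favourable term is $U'\dot p$, so you need $\dot p\gtrsim\eps^{-\kappa}p$. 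That gives $p(t)\sim e^{c\eps^{-\kappa}t}$ (the usual $e^{Lt}$, now with $L\gtrsim\eps^{-\kappa}$), and $\eps p(t)$ is not small on any fixed time interval.

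The paper avoids any growing shift. The profile $\uU(z,e,q,\delta)$ of Lemma~\ref{lem:CHL} absorbs $\eps\nabla^2\rho$, $\eps\xi_\eps$ and a perturbation $\pm\eps^\beta$ directly into the travelling-wave equation with speed $\mathfrak{c}$. Correspondingly, $\rho^\pm_\eps$ solves \eqref{eq:PDE_approximate_rho} with that perturbed speed. Only the constant shift $\pm\eps^a$ remains, and the mismatch costs $\eps^{-\kappa}|z\mp\eps^a|\,\uU_z=\oO(\eps^{-\kappa})$ by Proposition~\ref{prop:rho_approximate_effect}. Hence $\nN_\eps u^\pm_\eps=\pm\eps^{\beta-2}+\oO(\eps^{-\kappa})$ holds uniformly, including where $f'(\uU)>0$.

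The fix you need is to build such a speed perturbation into your $\Sigma^\eps_t$ instead of using a $p(t)$-shift. Convergence to $\Sigma_t$ then requires stability with respect to the drift (Lemma~\ref{lem:convergence_Feps}, Theorem~\ref{thm:stability}), not only rough-path continuity.

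Minor: solvability is tested against $Y=a_e(\overline\uU)\overline\uU_z$, not against $\overline\uU_z$. That is what produces $\int_{-1}^1a_e/\lambda(e)=b(e)$.
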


Part (i) of the theorem concerns local well-posedness of \eqref{eq:smcf_surface} and its precise meaning. These will be covered in Theorem~\ref{thm:well_posedness_smcf} and Proposition~\ref{prop:interpretation}, including a rigorous SPDE characterisation of \eqref{eq:smcf_surface} as well as its interpretation as a ``stochastic curvature flow".  Part (ii) of the theorem will be proved in Section~\ref{sec:convergence_smcf}. 

To the best of our knowledge, this seems to be the first time that a stochastic mean curvature flow in dimension $d \geq 2$ with white-in-time and space dependent noise is constructed. The main difficulty lies in the local well-posedness of \eqref{eq:smcf_surface} when $\W$ arises from standard Brownian motions and $\bm\sigma$ is non-constant. We refer to more discussions in Section~\ref{sec:difficulty}.

\subsection*{Notation}

When a function depends on various different variables (for example, $x$, $u$, $p$, $q$, etc.), we use $D_x$, $D_u$, $D_p$, $D_q$ to denote the Jacobians with respect to corresponding variables. In situations when these variables are themselves functions of the spatial variable $x \in \RR^d$, and that we view the original function as a function of $x \in \RR^d$, then we use $\nabla$ to denote the gradient with respect to the spatial variable (and $\nabla^\sft$ to denote its transpose). 

For a function $F = F(t,x,v, \nabla v, \nabla^2 v)$ with $v$ being real-valued spacetime function, we write $\fF$ as the functional 
\begin{equation*}
    (\fF v)(t,x) := F\big( t, x, v(t,x), \nabla v(t,x), \nabla^2 v(t,x) \big)\;.
\end{equation*}
We use the same convention for other such functions, including $G$ (with $\gG$ being the functional) and $H$ (with $\hH$ being the functional).

We say an open set $\oO_0 \Subset \oO$ if $\overline{\oO_0} \subset \oO$. For spacetime function $g$ on $[0,T] \times \oO$ with $\oO$ being an open bounded subset of $\RR^d$, we say $g \in \lL_t^\infty \cC_{x,loc}^\gamma ([0,T] \times \oO)$ if for every $\oO_0 \Subset \oO$, we have $\|g\|_{\lL_t^\infty \cC_x^\gamma ([0,T] \times \overline{\oO_0})} < +\infty$. Also, we say $g \in \lL_{x,loc}^\infty \cC_t^\gamma$ if for every $\oO_0 \Subset \oO$, we have $\|g\|_{\lL_x^\infty \cC_t^\gamma ([0,T] \times \overline{\oO_0})} < +\infty$. 

For $\gamma \in (0,1)$, we define the parabolic norm $\cC_{\pa}^\gamma$
\begin{equation*}
    \|g\|_{\cC_\pa^\gamma} := \|g\|_{\lL_t^\infty \cC_x^\gamma} + \|g\|_{\cC_t^{\gamma/2} \lL_x^\infty}\;.
\end{equation*}
We also need a parabolic norm with exponent between $2$ and $3$. In this case, we define (for $\gamma \in (0,1)$)
\begin{equation*}
    \|g\|_{\cC_\pa^{2+\gamma}} := \|\d_t g\|_{\cC_\pa^\gamma} + \|\nabla^2 g\|_{\cC_\pa^\gamma} + \|g\|_{\lL_{t,x}^\infty}\;.
\end{equation*}

\subsection*{Organisation of the article}

The rest of the article is organised as follows. In Section~\ref{sec:difficulty}, we give a description of the formal evolution \eqref{eq:smcf_surface} in terms of the signed distance function, and put it in the general context of rough PDEs. We then explain the main difficulties arising from the type of noises and our main strategy. Statements regarding its local well-posedness and stability in the general RPDE context are also given. 

In Section~\ref{sec:derivation_well_posedness_smcf}, we return to the original phase field evolution \eqref{SAC}. Starting with an ansatz for proper sub- and super-solutions to \eqref{SAC}, we derive the equations for the signed distance functions associated with them. Then, based on the statements about general RPDEs in Section~\ref{sec:difficulty}, we prove convergence of these signed distance functions to the limiting one. 

In Section~\ref{sec:sub_sup_convergence}, we rigorously construct the sub- and super-solutions to \eqref{SAC} from the signed distance functions constructed in Section~\ref{sec:derivation_well_posedness_smcf}, and apply the comparison principle to prove Theorem~\ref{thm:main}. 

Finally in Section~\ref{sec:RPDE_stability_proof}, we give detailed proofs of the stability of RPDEs stated in Section~\ref{sec:difficulty}.

\subsection*{Acknowledgement}

We are very grateful to Tadahisa Funaki and Jiajun Tong for inspiring and helpful discussions throughout this project. In particular, Tadahisa Funaki suggested that the techniques developed in an earlier draft version of this article (initially intended to deal with standard Allen-Cahn with $D = \id$) might be adapted to the nonlinear diffusion case \eqref{SAC}. Jiajun Tong pointed out to us the expression of $\pi_t(x)$ in \eqref{eq:foot_point}, which leads to the derivation of the forced mean curvature flow \eqref{eq:mcf_deterministic}. 

W. Xu was supported by Ministry of Science and Technology via the National Key R\&D Program of China (no.2023YFA1010100) and National Science Foundation China via the Key Programme Grant (no.12595280 and 12595281).

\section{Main strategy and statements about general rough PDEs}
\label{sec:difficulty}

\subsection{Formal derivation of the PDE and the main difficulty}

To illustrate the main difficulty, we begin with a simplified version of \eqref{SAC}: setting the diffusion matrix $D = \id$, and also replacing the noise $\eps^{-1} \xi_\eps$ by $\eps^{-1} g$ with a smooth spacetime function $g$. It is well known (see for example \cite{BSS93}) that in this case, the transition layer of $u_\eps$ converges to a hypersurface $\Sigma_t$ evolving according to a mean curvature flow with smooth forcing, given by 
\begin{equation} \label{eq:mcf_surface}
    V = -\div_{\Sigma_t} \n - c_0 \, g\;.
\end{equation}
Here $c_0$ is an explicit constant depending on $f$. The above evolution has a parametrisation by signed distance function as follows. Let
\[
\rho(t,x):=
\begin{cases}
-\mathrm{dist}(x,\Sigma_t)\;, & x \ \text{inside } \Sigma_t\;,\\[1mm]
\ \ \mathrm{dist}(x,\Sigma_t)\;, & x \ \text{outside } \Sigma_t\;.
\end{cases}
\]
In the case without external forcing, it is well known that the evolution of signed distance function associated with \eqref{eq:mcf_surface} (with $g=0$) is given by the nonlinear parabolic equation
\begin{equation*}
    \d_t \rho = \operatorname{tr}\! \big(\nabla^2\rho \, (\id-\rho \nabla^2\rho )^{-1} \big)\;,
\end{equation*}
and that $|\nabla \rho| = 1$, at least in a sufficiently small neighbourhood of $\Sigma_t$. To see what the evolution of $\rho$ should be with a general smooth spacetime function $g$, we first note that for every point $x$ also in a sufficiently small neighbourhood of $\Sigma_t$, there is a unique point $\pi_t(x) \in \Sigma_t$ such that $|\rho(t,x)| = |x-\pi_t(x)|$, and that one has the expression
\begin{equation} \label{eq:foot_point}
    \pi_t(x):=x-\rho(t,x)\nabla\rho(t,x)\in\Sigma_t\;.
\end{equation}
Along the line segment joining $x$ and $\pi_t(x)$ (which is in the normal direction), the signed distance varies exactly with the normal motion of the interface, which gives
\[
\partial_t\rho(t,x)=-\,V\bigl(t,\pi_t(x)\bigr)\;.
\]
Accordingly, the point which the external forcing $g$ acts on the surface $\Sigma_t$ should also be $\pi_t(x)$. This leads us to the equation
\begin{equation} \label{eq:mcf_deterministic}
    \partial_t\rho = \operatorname{tr} \! \big(\nabla^2\rho \, (\id-\rho \nabla^2\rho )^{-1} \big) \;+\; c_0 \, g \bigl(t,\,x-\rho \nabla\rho \bigr)\;.
\end{equation}
In particular, the forcing is evaluated at the point $\pi_t(x) = x - \rho(t,x) \nabla \rho(t,x) \in \Sigma_t$ rather than at $x$ itself.

In our situation where the smooth spacetime function $g$ is replaced by the noise $\xi_\eps$ in \eqref{eq:cond_noise_form} (for the moment still with $D = \id$ for simplicity), one expects the sharp interface limit to be given by mean curvature flow with stochastic forcing. The corresponding parametrisation with signed distance function is given by
\begin{equation} \label{eq:smcf_model}
    \d_t \rho = \operatorname{tr}\! \big(\nabla^2\rho \, (\id-\rho \nabla^2\rho )^{-1} \big) + c_0 \sum_{j=1}^{m} \sigma_j (x - \rho \nabla \rho) \cdot \frac{\md W^j_t}{\md t}\;.
\end{equation}
The main issue comes from the $\nabla \rho$ dependence inside the function $\sigma$, which makes the above equation formally critical. Indeed, if one counts parabolic spacetime regularity -- the white noise in time is formally $\cC^{-1-}$ (in terms of parabolic metric), the nonlinear heat operator increases the regularity of the solution by $2$ (hence one expects $\rho \in \cC^{1-}$), and then $\nabla \rho \in \cC^{0-}$. 

This places the equation beyond the ``subcritical'' framework of \cite{HN24}, which deals with quasi-linear rough evolution equations that are just ``better" than \eqref{eq:smcf_model} (in terms of either stronger smoothing, or more regular noise). The candidate for our limiting evolution \eqref{eq:smcf_model} just falls out of that scope. Furthermore, it is a fully nonlinear equation. 

Note that in the special situation of space-independent noise, $\sigma_j$ are constant functions, and hence the above mentioned issues do not appear.

\subsection{Main strategy}
\label{sec:RPDE_thm}

The equation \eqref{eq:smcf_model} can be put in a more general context of rough PDEs. We first introduce the concept of rough paths. 

\begin{defn} \label{defn:rp}
    For $\alpha \in (\frac{1}{3}, \frac{1}{2}]$, an $\alpha$-H\"older rough path over $\RR^m$ is a pair $\X = (X, \XX)$ with $X: [0,T] \rightarrow \RR^m$ and $\XX: \Delta_{[0,T]} \rightarrow \RR^{m \times m}$ such that
    \begin{itemize}
        \item Chen's identity: for every $0 \leq s \leq r \leq t \leq T$, one has
        \begin{equation*}
            \XX_{s,t} = \XX_{s,r} + \XX_{r,t} + (X_r - X_s) \otimes (X_t - X_r)
        \end{equation*}
        \item We have
        \begin{equation*}
            \|X\|_\alpha := \sup_{0 \leq s < t \leq T} \frac{|X_t - X_s|}{|t-s|^\alpha} < +\infty\;, \quad \|\XX\|_{2 \alpha} := \sup_{0 \leq s < t \leq T} \frac{|\XX_{s,t}|}{|t-s|^{2\alpha}} < +\infty\;.
        \end{equation*}
    \end{itemize}
\end{defn}

For an $\alpha$-H\"older rough path $\X$, we define its $\alpha$-H\"older rough path norm by
\begin{equation*}
    \|\X\|_{\emC^\alpha} := \|X\|_\alpha + \|\XX\|_{2\alpha}^{1/2}\;.
\end{equation*}
An $\alpha$-H\"older rough path is called geometric if it belongs to the closure of the canonical lifts of smooth paths in the $\alpha$-H\"older rough path topology. We denote the resulting space of geometric $\alpha$-H\"older rough paths by $\emC_g^\alpha([0,T],\RR^m)$. We compare $\X$ and $\widetilde{\X}$ in $\emC_g^\alpha$ by
\begin{equation*}
    d_\alpha (\X, \widetilde{\X}) := \|X - \widetilde{X}\|_\alpha + \|\XX - \widetilde{\XX}\|_{2\alpha}^{1/2}\;.
\end{equation*}
In our situation, we consider the rough path arising from Stratonovich enhancement of the Brownian motion. More precisely, let
\begin{equation*}
    \WW^\eps_{s,t} := \int_{s}^{t} (W^\eps_r - W^\eps_s) \otimes \md W^\eps_r\;, \quad \WW_{s,t} := \int_s^t (W_r - W_s) \circ_\otimes \md W_r\;,
\end{equation*}
where the latter is in Stratonovich sense. Let
\begin{equation*}
    \W^\eps := (W^\eps, \WW^\eps)\;, \qquad \W := (W, \WW)\;.
\end{equation*}
The following proposition is well known.

\begin{prop} \label{prop:BM_rough_paths}
    For every $\alpha \in (\frac{1}{3},\frac{1}{2})$, $\W^\eps$ and $\W$ belong to $\emC_g^\alpha ([0,T]; \RR^m)$. Furthermore, there exists $\theta>0$ such that
    \begin{equation*}
        d_\alpha (\W^\eps, \W) \lesssim \eps^\theta\;.
    \end{equation*}
\end{prop}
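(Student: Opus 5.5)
The argument splits into three parts: $\W^\eps$ is a geometric rough path, $\W$ is a geometric rough path, and the distance between them is controlled by a power of $\eps$. The convergence rate is the main point; the rest is standard.

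\emph{$\W^\eps$ is geometric.} Since $W^\eps$ is smooth, $\W^\eps$ is its canonical lift. Chen's identity follows from additivity of the Riemann--Stieltjes integral, and finite $\alpha$-H\"older norms are immediate on $[0,T]$. Hence $\W^\eps\in\emC^\alpha_g$ by definition.

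\emph{$\W$ is geometric.} Chen's identity for the Stratonovich integral follows from additivity of stochastic integrals. The H\"older bounds come from Kolmogorov's criterion for two-parameter processes: Gaussian hypercontractivity gives $\mathbb{E}|W_t-W_s|^p\lesssim|t-s|^{p/2}$ and $\mathbb{E}|\WW_{s,t}|^p\lesssim|t-s|^{p}$. Together with Chen's identity, the rough-path Kolmogorov criterion then yields $\|W\|_\alpha+\|\WW\|_{2\alpha}<\infty$ almost surely for every $\alpha<1/2$. Geometricity holds because $\WW$ is the limit of lifts of smooth approximations, namely $\W^\eps$ itself by the step below; alternatively, one can note that the symmetric part of $\WW_{s,t}$ equals $\frac12(W_t-W_s)^{\otimes2}$ and invoke the standard characterisation of weakly geometric paths with $\alpha'>\alpha$ and interpolation.

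\emph{The rate, which is the main obstacle.} I would take the realisation $W^{j;\eps}=W^j*\varphi_{\delta}$ with $\delta=\eps^{\kappa'}$, as in the footnote to Assumption~\ref{ass:assumption_overall}. I then estimate moments of the differences directly:
\begin{equation*}
\mathbb{E}|(W^\eps-W)_{s,t}|^2\lesssim \min(|t-s|,\delta)\;,\qquad \mathbb{E}|\WW^\eps_{s,t}-\WW_{s,t}|^2\lesssim \min(|t-s|^2,\delta|t-s|)\;.
\end{equation*}
The first bound follows from the stationarity of increments. For the second, I write $\WW^\eps_{s,t}-\WW_{s,t}$ as a second-chaos element. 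Its $L^2$ norm is computed via covariance integrals, using that the Stratonovich correction is exactly what the smooth lift sees in the limit; the diagonal part contributes symmetrically and cancels.

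Interpolating these bounds gives, for $\alpha<\alpha'<1/2$, bounds of order $\delta^{(1/2-\alpha')}|t-s|^{\alpha'}$ in $L^2$. Hypercontractivity on the first and second chaos upgrades these to $L^p$. The Kolmogorov criterion for rough-path distances (e.g.\ Friz--Hairer, Thm.~3.3) then yields $\mathbb{E}\,d_\alpha(\W^\eps,\W)^p\lesssim\delta^{p\theta'}$.

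To obtain an almost sure bound with a random constant, uniform in $\eps$, I apply Borel--Cantelli along $\eps_n=2^{-n}$. Interpolation between dyadic scales uses the uniform bounds on $\W^\eps$ coming from the assumed derivative bounds \eqref{eq:cond_BM_approximation}. This yields $d_\alpha(\W^\eps,\W)\lesssim\eps^\theta$ with $\theta=\kappa'\theta'/2$, say. The delicate part is this last step, passing from the expectation bound to a pathwise bound valid for all small $\eps$ simultaneously.
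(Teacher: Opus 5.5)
The paper gives no proof of this proposition; it is quoted as well known. Your route is the standard argument: Gaussian moment bounds, hypercontractivity, the Kolmogorov criterion for rough-path distances, then Borel--Cantelli. It is sound up to $\mathbb{E}\, d_\alpha(\W^\eps,\W)^p\lesssim\delta^{p\theta'}$.

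Restricting to the mollification from the footnote is necessary, not just convenient: \eqref{eq:cond_BM_approximation} alone does not force $W^\eps\to W$. Two small imprecisions, neither of which affects the argument since hypercontractivity holds on chaoses of order at most two:
\begin{itemize}
\item $\WW^\eps_{s,t}-\WW_{s,t}$ is not a pure second-chaos element, because its diagonal entries have a nonzero deterministic mean.
\item Its symmetric part $\tfrac12\bigl(W^\eps_{s,t}\otimes W^\eps_{s,t}-W_{s,t}\otimes W_{s,t}\bigr)$ does not cancel; it is controlled by the first level.
\end{itemize}

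The genuine gap is the passage from $\eps_n=2^{-n}$ to all small $\eps$, which you correctly flag as delicate. The bounds \eqref{eq:cond_BM_approximation} are derivative bounds in $t$ and say nothing about how $\W^\eps$ depends on $\eps$. Uniform bounds on $\|\W^\eps\|_{\emC^\alpha}$ plus a rate along $\eps_n$ give no rate, indeed no convergence, for $\eps\in(\eps_{n+1},\eps_n)$.

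Comparing $\W^\eps$ with $\W^{\eps_n}$ through the Lipschitz bound $\eps^{-\kappa}$ does not help. It only controls increments with $|t-s|\lesssim\eps^{(\kappa+\theta)/(1-\alpha)}$, far below the mollification scale $\delta=\eps^{\kappa'}$. On larger time scales, $\WW^\eps_{s,t}-\WW^{\eps_n}_{s,t}$ is not controlled by any uniform closeness of the two paths, because the area is not continuous in the uniform topology. Its typical size $\delta^{1/2}|t-s|^{1/2}$ comes from probabilistic cancellations, and you have turned those into almost sure statements only at countably many $\eps$. What is missing is control in the scale parameter.

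One standard fix, writing $W^\delta$ for the mollification at scale $\delta$:
\begin{itemize}
\item Replace the dyadic sequence by a grid $\{\delta_k\}$ with $|\delta_{k+1}-\delta_k|\le\delta_k^N$. It has only polynomially many points per dyadic block, so Borel--Cantelli still applies, since you have moments of all orders.
\item Between neighbouring grid points, use the explicit kernel. It gives deterministically $\|\dot W^{\delta}-\dot W^{\delta'}\|_\infty\lesssim\|W\|_\infty|\delta-\delta'|\delta^{-2}$ and $\|\dot W^\delta\|_\infty\lesssim\|W\|_\infty\delta^{-1}$ for $\delta'\in[\delta/2,\delta]$.
\item The Riemann--Stieltjes estimate then yields
\begin{equation*}
|W^\delta_{s,t}-W^{\delta'}_{s,t}|+|\WW^\delta_{s,t}-\WW^{\delta'}_{s,t}|\lesssim_{\|W\|_\infty,T}|t-s|\,|\delta-\delta'|\,\delta^{-3}\;,
\end{equation*}
which is negligible in $d_\alpha$ once $N$ is large.
\end{itemize}
Alternatively, run the Kolmogorov criterion jointly in $(\delta,s,t)$, using moment bounds for $\W^{\delta}-\W^{\delta'}$ in terms of $|\delta-\delta'|/\delta$.
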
 

We next introduce the concept of controlled rough paths. 

\begin{defn} \label{defn:controlled_rp}
    Let $E$ be a finite-dimensional normed space. A pair $(Y,Y')$ belongs to $\dD_W^{2\alpha}([0,T];E)$ if
\[
    Y\in\cC^\alpha([0,T];E)\;,\quad
    Y'\in\cC^\alpha([0,T];\emL(\RR^m,E))\;,
\]
and the remainder
\[
    R^Y_{s,t}:= Y_t - Y_s - Y'_s (W_t - W_s)
\]
satisfies $\|R^Y\|_{2\alpha}<\infty$. We equip this space with the norm
\[
    \|(Y,Y')\|_{\dD_W^{2\alpha}}
    :=|Y_0|+|Y'_0|+\|Y'\|_\alpha+\|R^Y\|_{2\alpha}\;.
\]
Finally, for an open set $\oO\subset\RR^d$, the notation
\[
    (Y,Y')\in\lL_{x,loc}^\infty
    \dD_W^{2\alpha}([0,T];E)
\]
means that $(Y(\cdot,x),Y'(\cdot,x))\in \dD_W^{2\alpha}([0,T];E)$ for every $x\in \oO$ and, for every compact set $K\subset \oO$,
\[
    \sup_{x\in K}
    \big\|(Y(\cdot,x),Y'(\cdot,x))\big\|_{\dD_W^{2\alpha}}
    <\infty\;.
\]
\end{defn}

Consider the general RPDE of the form
\begin{equation} \label{e:RPDE}
\left\{
\begin{aligned}
    &\md \rho = F(t,x,\rho, \nabla \rho, \nabla^2 \rho) \, \md t +  H(x, \rho, \nabla \rho) \, \md \W\;,\quad &&t\in[0,T]\;,x\in\overline{\oO_t}\;,\\
    &G (x, \rho, \nabla \rho) = 0\;, &&t\in[0,T]\;, x \in \d\oO_t\;,\\
    &\rho(0,\cdot) = \rho^0\;,&& x\in\overline{\oO}_0\;,
\end{aligned}\right.
\end{equation}
where $\oO_t$ is a subset of $\RR^d$ depending on $t$, and $\W=(W,\WW)\in\emC_g^\alpha([0,T],\RR^m)$ is an $\alpha$-H\"older rough path with $\alpha\in(\frac13,\frac12)$. Throughout this subsection we regard all vectors, including $x,p,\nabla\rho,H$ and $W$, as column vectors. In the notation above, $\hH(\rho)\,\md\W$ denotes the scalar rough integral obtained by pairing $\hH(\rho)$ with $\md\W$; we keep this conventional notation and do not write an additional transpose in \eqref{e:RPDE}. We first specify what we mean by a solution to \eqref{e:RPDE}.

\begin{defn} \label{def:RPDE}
    A function $\rho$ is called a solution to the RPDE \eqref{e:RPDE} if the following conditions hold:
    \begin{enumerate}
        \item $\nabla^k \rho \in \cC_\pa^{2\alpha}$ for $0 \leq k \leq 2$. And $\rho$ satisfies the initial and boundary condition such that $\rho(0,x)=\rho^0(x)$ holds for all $x\in\overline{\oO}_0$, and $G[\rho]=0$ holds for all $t\in[0,T]$, $x\in\partial\oO_t$.

        \item $\rho$ and $\nabla \rho$ are both controlled by $W$; more precisely, there exists $\rho'$ and $(\nabla \rho)'$ such that 
        \begin{equation*}
            (\rho, \rho') \in \lL_{x,loc}^\infty \dD_W^{2\alpha}([0,T]; \RR)\;, \qquad \big(\nabla \rho, (\nabla \rho)' \big) \in \lL_{x,loc}^\infty \dD_W^{2\alpha}([0,T]; \RR^d)\;.
        \end{equation*}
        Furthermore, for all $0\leq s\leq t\leq T$ and $x\in\cap_{s\leq r\leq t}\oO_r$, we have 
        \begin{equation*}
            \rho(t,x) = \rho(s,x) + \int_s^t (\fF\rho)(r,x)\,\md r + \int_s^t (\hH\rho)(r,x)\,\md\W_r\;,
        \end{equation*}
        where the second integral is the rough integral. 
    \end{enumerate}
\end{defn}

To solve \eqref{e:RPDE}, inspired by \cite[Chapter~12.2.3]{FH20}, we aim to construct a family of transformations $(\sS_t)_{t\geq0}$ such that the $\md\W$-term is eliminated in the PDE satisfied by 
\[
    v(t):=\sS_t\big(\rho(t)\big)\;.
\]
Differentiating in $t$ on both sides above, we formally get
\[
\md v =\md\sS_t(\rho)+\bracket{ \dD \sS_t(\rho),\md\rho} =\md\sS_t(\rho)+\bracket{ \dD \sS_t(\rho),\fF(\rho)}\,\md t +\bracket{\dD \sS_t(\rho),\hH(\rho)}\,\md\W\;,
\]
where $\dD$ denotes the Fr\'echet derivative. Thus, we would like to find $\sS_t$ such that 
\begin{equation}\label{e:transformation_cancellation}
    \md\sS_t(\rho)+\bracket{\dD \sS_t(\rho),\hH(\rho)}\,\md\W=0\;,
\end{equation}
so that the PDE for $v$ reduces to
\[
    \d_t v = \bracket{\dD\sS_t(\rho),\fF(\rho)} = \bracket{(\dD\sS_t)(\sS_t^{-1}(v)),\fF(\sS_t^{-1}(v))}\;,
\]
which can then be treated using classical theory.

Equation \eqref{e:transformation_cancellation} is an infinite-dimensional first order RPDE. Solving it is naturally related to the characteristic equation
\[
    \md\rho=\hH(\rho)\,\md\W\;.
\]
If $\tT_t$ denotes the solution flow of this characteristic equation, then formally $\md(\sS_t(\tT_t))=0$, and consequently $\sS_t=\tT_t^{-1}$. For this first order equation, the associated finite-dimensional characteristic system is
\begin{equation}\label{e:formal_characteristic}
        \begin{aligned}
        \md X &= -(D_pH)^\sft(X,U,P)\,\md \W\;,\\
        \md U &= \left(H(X,U,P)-D_pH(X,U,P)P\right)^\sft\,\md \W\;,\\
        \md P &= \left((D_xH)^\sft(X,U,P)+P (D_uH)^\sft(X,U,P)\right)\,\md \W\;.
    \end{aligned}
\end{equation}
Here $X,P\in\RR^d$ and $U\in\RR$. The second equation is scalar because $H-(D_pH) P\in\RR^m$, while the third equation is $\RR^d$-valued because $(D_xH)^\sft+P (D_uH)^\sft\in\RR^{d\times m}$.

However, rigorously pursuing this route requires computing the Fr\'echet derivative of the nonlinear flow $\tT_t$, which turns out to be rather cumbersome. To overcome this difficulty, \cite{BKMZ20} proposes a more efficient approach to derive the PDE for $v$. We adopt their method in what follows.

\subsection{Stability of the rough PDEs}

Since we will derive stochastic mean curvature flow from \eqref{SAC}, we start with a smooth-in-time noise and recover white-in-time noise in the $\eps \rightarrow 0$ limit. Hence, we consider the $\eps$-dependent system
\begin{equation*}
\left\{
\begin{aligned}
    &\d_t \rho_\eps = F_\eps \big(t,x,\rho_\eps, \nabla \rho_\eps, \nabla^2 \rho_\eps \big) + H (x, \rho_\eps, \nabla \rho_\eps) \dot{\W}^\eps\;,\quad &&t\in[0,T]\;,x\in\overline{\oO_t^\eps}\;,\\
    &G_\eps (x, \rho_\eps, \nabla \rho_\eps) = 0\;, &&t\in[0,T]\;, x \in \d\oO_t^\eps\;,\\
    &\rho_\eps(0,\cdot) = \rho^0\;,&& x\in\overline{\oO}_0\;.
\end{aligned}\right.
\end{equation*}
Throughout, we let
\begin{equation*}
    \Theta^\eps_t (\bm \theta) := \big( X^\eps_t (\bm\theta), \, U^\eps_t (\bm\theta), \, P^\eps_t (\bm\theta) \big)\;, \quad \bm\theta = (x,u,p) \in \RR^d \times \RR \times \RR^d
\end{equation*}
denote the solution to the RDE
\begin{equation} \label{e:characteristic}
        \begin{aligned}
        \md X^\eps &= -(D_p H)^\sft(X^\eps,U^\eps,P^\eps)\,\md \W^\eps\;, && X^\eps_0 = x\;,\\
        \md U^\eps &= \left(H(X^\eps,U^\eps,P^\eps)-D_p H(X^\eps,U^\eps,P^\eps) P^\eps \right)^\sft \,\md \W^\eps\;, && U^\eps_0 = u\;,\\
        \md P^\eps &= \left((D_x H)^\sft(X^\eps,U^\eps,P^\eps) + P^\eps (D_u H)^\sft(X^\eps, U^\eps, P^\eps) \right)\, \md \W^\eps\;, && P^\eps_0 = p\;,
    \end{aligned}
\end{equation}
where the $\eps=0$ case should be understood as the limiting RDE \eqref{e:formal_characteristic} and RPDE \eqref{e:RPDE}. Define $Q^\eps_t: \RR^d \times \RR \times \RR^d \times \RR^{d \times d} \rightarrow \RR^{d \times d}$ by
\begin{equation} \label{eq:Q_matrix_def}
    Q^\eps_t (\bm\theta, q) := \big( D_x P^\eps_t + D_u P^\eps_t \cdot p^\sft + D_p P^\eps_t \cdot q \big) \cdot \big( D_x X^\eps_t + D_u X^\eps_t \cdot p^\sft + D_p X^\eps_t \cdot q \big)^{-1}\;,
\end{equation}
where the Jacobian matrices of $P_t^\eps$ and $X_t^\eps$ are evaluated at $\bm\theta$. For $F_\eps: \RR^+ \times \RR^d \times \RR \times \RR^d \times \RR^{d \times d} \rightarrow \RR$ and $G_\eps: \RR^d \times \RR \times \RR^d \rightarrow \RR$, define $\widetilde{F}_\eps$ and $\widetilde{G}_\eps$ by (with $\bm\theta = (x,u,p)$)
\begin{equation} \label{eq:transforms_FG_tilde}
    \begin{split}
    \widetilde{F}_\eps (t,\bm\theta,q) &:= F_\eps \big( t, \,\Theta_t^\eps(\bm\theta), \,  Q^\eps_t (\bm\theta, q) \big) \cdot \exp \Big( - \int_0^t (D_u H)^\sft \big( \Theta_s^\eps(\bm\theta) \big) \,\md \W^\eps_s \Big)\;,\\
    \widetilde{G}_\eps (t, \bm\theta) &:= G_\eps \big( \Theta_t^\eps (\bm\theta) \big)\;.
    \end{split}
\end{equation}
We have the following crucial lemma. 

\begin{lem}\label{lem:transform}
Let $\widetilde{\oO}\subset\RR^d$ be a bounded, connected Lipschitz domain. Suppose $\widetilde{F}_\eps$ and $\widetilde{G}_\eps$ are transforms of $F_\eps$ and $G_\eps$ as in \eqref{eq:transforms_FG_tilde}. Suppose that the equation
    \begin{equation}\label{e:RPDE_transformed}
    \left\{
    \begin{aligned}
    &\partial_t v_\eps = \widetilde{F}_\eps(t,x,v_\eps,\nabla v_\eps,\nabla^2 v_\eps)\;, \quad &&t\in[0,T]\;, \; x \in \overline{ \widetilde{\oO}}\;,\\
    &\widetilde{G}_\eps(t,x,v_\eps,\nabla v_\eps) = 0\;,\quad &&t\in[0,T]\;, \, x\in \partial \widetilde{\oO}\;,\\
    &v_\eps(0,\cdot) = \rho^0\;,\quad &&x\in\overline{\widetilde{\oO}}\;.
    \end{aligned}\right.
    \end{equation}
    has a classical solution $v_\eps$ on $[0,T] \times \widetilde{\oO}$. Let $\Theta^\eps_t(\bm\theta)$ be the solution to \eqref{e:characteristic}. Write
    \begin{equation*}
        \Phi^\eps_t (x) := \big( x, \, v_\eps(t,x), \, \nabla v_\eps (t,x) \big)\;, \qquad \widehat{\Theta}^\eps_t (x) := \Theta^\eps_t \big( \Phi^\eps_t (x) \big)\;,
    \end{equation*}
    and the same for components $X^\eps, U^\eps, P^\eps$ of $\Theta^\eps$. Then the followings assertions hold: 
    \begin{enumerate}
    \item For every $\eps \geq 0$, there exists $\tau_\eps>0$ depending on $\|v_\eps\|_{\lL_t^\infty \cC_x^2([0,T] \times \overline{\widetilde{\oO}})}$ such that for every $t \in [0, \tau_\eps]$, the map $\widehat{X}_t^\eps: \widetilde{\oO} \rightarrow \RR^d$ is injective, and hence has an inverse $(\widehat{X}_t^\eps)^{-1}$. 

    \item If in addition $v_\eps \in \lL_t^\infty \cC_{x,loc}^4$ and $\nabla^k v_\eps \in \lL_{x,loc}^\infty \cC_t^\alpha$ for $0 \leq k \leq 3$, then the function
    \begin{equation*}
        \rho_\eps (t,x) := \big( \widehat{U}^\eps_t \circ (\widehat{X}^\eps_t)^{-1} \big)(x)\;, \quad t \in [0, \tau_\eps]\;, \; x \in \oO_t^\eps := \big\{ \widehat{X}_t^\eps(y): \; y \in \widetilde{\oO} \big\}
    \end{equation*}
    is a solution to \eqref{e:RPDE} in the sense of Definition~\ref{def:RPDE} (and classical solution for $\eps>0$). Moreover, we have 
    \begin{equation*}
        \big(\nabla^k\rho_\eps, \nabla^k(\hH(\rho_\eps))\big)\in \lL_{x,loc}^\infty \dD_{W^\eps}^{2\alpha}\;
    \end{equation*}
    for $0 \leq k \leq 2$, and $\nabla^3\rho_\eps\in\lL_{x,loc}^\infty\cC_t^\alpha$. Furthermore, for all open sets $\oO',\oO$ satisfying $\oO'\Subset\oO\Subset\widetilde{\oO}$, we have
    \begin{equation*}
        \|\nabla^4\rho_\eps\|_{\lL^\infty([0,\tau_\eps]\times \overline{\oO'})}\lesssim_{\|\W^\eps\|_{\emC^\alpha},\,\|v_\eps\|_{\lL_t^\infty\cC_x^4([0,T]\times \overline{\oO})}} 1\;.
    \end{equation*}
    \item Suppose further that $\|v_\eps\|_{\lL_t^\infty \cC_x^2([0,T] \times \overline{\widetilde{\oO}})}$ are uniformly bounded in $\eps \in [0,1]$. Then one can choose $\tau>0$ independent of $\eps$ such that all of the above are true with $\tau_\eps$ replaced by $\tau$. Furthermore, for every open set $\oO \Subset \widetilde{\oO}$, by decreasing $\tau$ if necessary (but still positive), one can ensure that
    \begin{equation*}
        \oO \Subset \big\{ \widehat{X}_t^\eps(y): \; y \in \widetilde{\oO} \big\}\;,
    \end{equation*}
    and one has the bound
    \begin{equation} \label{eq:PDE_stability}
        \|\rho_\eps - \rho_0\|_{\lL_t^\infty \cC_x^2 ([0,\tau]\times\overline{\oO} )} \lesssim d_\alpha (\W^\eps, \W) + \|v_\eps - v_0\|_{\lL_t^\infty \cC_x^2([0,T] \times \overline{\widetilde{\oO}})}\;.
    \end{equation}
    \end{enumerate}
\end{lem}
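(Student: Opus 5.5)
The plan is to follow the rough method of characteristics of \cite{BKMZ20} and argue first at the level of the flow $\Theta^\eps$ of \eqref{e:characteristic}. Standard RDE theory gives that, for $H$ smooth and $\W^\eps\in\emC_g^\alpha$, $\bm\theta\mapsto\Theta^\eps_t(\bm\theta)$ is a $\cC^k$ flow of diffeomorphisms for any $k$, locally uniformly in $\bm\theta$. We also get
\begin{equation*}
\sup_{t\le T}\|\Theta^\eps_t-\Theta^0_t\|_{\cC^k(K)}\lesssim d_\alpha(\W^\eps,\W)
\end{equation*}
on compact sets $K$, together with $\|\Theta^\eps_t-\id\|_{\cC^k(K)}\lesssim t^\alpha$ uniformly in $\eps$. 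These flow estimates are the only input from rough path theory.

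\textbf{Part 1.} We have $\widehat X^\eps_t(x)=X^\eps_t(x,v_\eps,\nabla v_\eps)$. Its Jacobian is $D_xX+D_uX\,(\nabla v_\eps)^\sft+D_pX\,\nabla^2v_\eps$, and this equals $\id+O(t^\alpha(1+\|v_\eps\|_{\cC^2}))$. The remainder term is uniformly small in $x$, and $v_\eps$ takes values in a fixed compact set. Hence, for $t\le\tau_\eps$, the map $\widehat X^\eps_t-\id$ is a contraction in $x$, which gives injectivity.

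\textbf{Part 2.} This is the computational core. First I check that $\widehat P^\eps_t=\nabla\rho_\eps\circ\widehat X^\eps_t$. This follows from the invariance of the contact structure $\md u-p\cdot\md x=0$ under the characteristic flow. Differentiating $U-P\cdot X$-type quantities along \eqref{e:characteristic}, one verifies that $D\widehat U-(\widehat P)^\sft D\widehat X=0$ at $t=0$ (because $p=\nabla v$ there) and that this quantity is propagated, so it stays zero. Differentiating once more, using $\nabla^2\rho_\eps\circ\widehat X=D\widehat P(D\widehat X)^{-1}$, one sees that $\nabla^2\rho_\eps\circ\widehat X^\eps_t=Q^\eps_t(\Phi^\eps_t,\nabla^2v_\eps)$.

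Next I differentiate $\rho_\eps(t,\widehat X_t(y))=\widehat U_t(y)$ in $t$. For $\eps>0$ everything is smooth, so the chain rule applies directly. The $\dot\W^\eps$-contributions from $\d_t\Theta$ at fixed $\bm\theta$ exactly reproduce $H\,\dot\W^\eps$ at the point $\widehat X$, by the characteristic identities. The $\d_t v_\eps$-contribution reduces to the term $D_u\Theta\,\d_tv_\eps$. The key identity needed here is
\begin{equation*}
D_uU-P\cdot D_uX=\exp\Big(\int_0^t(D_uH)^\sft\,\md\W^\eps\Big),
\end{equation*}
which comes from a linear RDE. This identity is exactly what cancels the exponential factor in $\widetilde F_\eps$, leaving $F_\eps(t,x,\rho_\eps,\nabla\rho_\eps,\nabla^2\rho_\eps)$. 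The boundary condition transfers directly via $\widetilde G_\eps$.

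For $\eps=0$, I pass to the limit. Alternatively, I redo the same computation with rough Itô/chain rules for controlled paths. Taking the time-regularity of $\nabla^kv$ as given, the spatial derivatives $\nabla^k\rho_\eps$ are compositions of smooth flow derivatives with controlled paths, and so they are controlled by $W$. This gives the $\dD^{2\alpha}_W$ claims and the $\nabla^4$ bound, with $\nabla^4$ costing $\nabla^4 v$ and $\cC^5$ of the flow.

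\textbf{Part 3.} A uniform $\cC^2$ bound on $v_\eps$ makes $\tau$ uniform in $\eps$. Covering $\oO$ follows from $\widehat X_t-\id=O(t^\alpha)$. For \eqref{eq:PDE_stability}, I write $\rho_\eps-\rho_0=\widehat U^\eps\circ(\widehat X^\eps)^{-1}-\widehat U^0\circ(\widehat X^0)^{-1}$, together with the analogous formulas for $\nabla\rho$ and $\nabla^2\rho$ via $P$ and $Q$. Each difference splits into a flow difference, bounded by $d_\alpha(\W^\eps,\W)$, and a $\Phi$ difference, bounded by $\|v_\eps-v_0\|_{\cC^2}$. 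Inverse maps are handled by a Lipschitz inverse-function estimate.

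I expect the main obstacle to be the $\eps=0$ case of Part~2. There one must justify the chain rule for rough compositions and establish that $\nabla^2\rho$ is controlled, which needs three derivatives of $v$ with $\cC^\alpha_t$ regularity. My first approach is to prove everything for $\eps>0$ with bounds uniform in $\eps$, and then pass to the limit using the stability of rough integrals.
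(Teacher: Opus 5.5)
Your proposal is correct and uses the same identities as the paper's proof:
\begin{itemize}
\item the contact identity $\nabla^\sft\widehat U_t=\widehat P_t^\sft\nabla^\sft\widehat X_t$ and its $u$-direction version, both obtained from the linear RDE $\md Z=Z\,H_u^\sft\,\md\W$ of Lemma~\ref{lem:jacobian_identity} (the $p$-direction, hence $\d_t\nabla v$, contributes nothing);
\item the identification $\nabla^2\rho\circ\widehat X_t=Q_t(\Phi_t,\nabla^2 v)$;
\item the transfer of the boundary condition;
\item Part~3 via flow stability plus an inverse-map estimate.
\end{itemize}

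The real difference is Part~2 at $\eps=0$. The paper stays at the rough level. Using Lemmas~\ref{lem:composition_2} and~\ref{lem:inverse_rough_path}, it writes $\rho=\widehat U\circ\widehat S$ as $\int(\eE,\eE')\,\md\W+\int\fF\,\md t$. Most of its proof is the explicit algebra showing $\eE=H^\sft(\widehat\Theta)$ and that $\eE'$ equals the Gubinelli derivative of $\hH(\rho)$ built from $(\nabla\rho)'$. The drift is treated exactly as you do.

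Your smooth-approximation route gets this second-order matching for free, because against canonical lifts of smooth paths the rough integral is a Riemann integral. In exchange you need two things:
\begin{itemize}
\item uniform-in-$n$ bounds on $\nabla^k\rho^n$ and $\hH(\rho^n)$ in $\dD^{2\alpha}_{W^n}$, which still come from the composition and inverse lemmas, but only at first order;
\item continuity of rough integration in $d_\alpha$.
\end{itemize}

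Make the limit precise by freezing $v_0$ and smoothing only $\W$, which is possible since $\W$ is geometric. Part~2 concerns an arbitrary solution $v_0$, and no convergence $v_\eps\to v_0$ is available inside this lemma, so ``uniform in $\eps$'' along the family $v_\eps$ is not the right limit. With $v_0$ frozen, the level-$n$ drift is not exactly $F(\rho^n)$. It is $F(t,\widehat\Theta^0_t,Q^0_t)$ times $\exp\big(\int_0^tH_u^\sft(\Theta^n_s)\,\md\W^n_s-\int_0^tH_u^\sft(\Theta^0_s)\,\md\W_s\big)$, composed with $(\widehat X^n_t)^{-1}$. This converges to $F(\rho)$, which is enough.

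One small point on Part~1: a bound on $\nabla\widehat X_t-\id$ gives injectivity only through quasi-convexity of bounded Lipschitz domains. Here $\widetilde\oO$ is a tubular neighbourhood, not convex, so calling $\widehat X_t-\id$ a ``contraction'' is exactly where the Lipschitz hypothesis must be used.
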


\begin{rmk}
    The converse also holds: if $\rho$ solves the RPDE \eqref{e:RPDE}, then $v$ is the classical solution to the transformed PDE \eqref{e:RPDE_transformed}; see \cite[Theorem~4.5]{BKMZ20} for details.
\end{rmk}

In what follows, when a fixed reference domain $\widetilde{\oO}$ is prescribed, we say that the RPDE \eqref{e:RPDE} is induced by $\widetilde{\oO}$ if the transformed problem is posed on the fixed domain $\widetilde{\oO}$, and the physical domain in \eqref{e:RPDE} is given by
\[
    \oO_t=\{\widehat{X}_t(x):x\in\widetilde\oO\}\;.
\]

\begin{thm} \label{thm:stability}
Fix $\alpha\in\left(\frac{1}{3},\frac{1}{2}\right)$. Let $\widehat{\oO} \subset \RR^d$ be an open, connected, bounded domain with smooth boundary, and fix a domain $\widetilde{\oO} \Subset \widehat{\oO}$ with $\partial\widetilde{\oO}\in\cC^{2+2\alpha}$. Let $\dD_2 \subset \RR \times \RR^d$ and $\dD_3 \subset \RR \times \RR^d \times \RR^{d \times d}$ be open sets such that
\begin{equation*}
    \begin{split}
    \dD_2 &\supset \big\{ (\rho^0(x), \nabla \rho^0(x))\;, \; x \in \overline{\widetilde{\oO}} \big\}\;,\\
    \dD_3 &= \dD_2 \times \text{a neighbourhood of} \; \big\{ \nabla^2 \rho^0(x)\;, \; x \in \overline{\widetilde{\oO}} \big\}\;.
    \end{split}
\end{equation*}
Consider the RPDE \eqref{e:RPDE} induced by $\widetilde{\oO}$, where $F_\eps$, $G_\eps$, $H$, $\W^\eps$ and $\rho^0$ satisfy the following conditions.
\begin{enumerate}
\item  $F_\eps$ is uniformly elliptic on $[0,T] \times \widehat{\oO} \times \dD_3$ in the sense that
\begin{equation}\label{e:condition_F_elliptic_RPDE}
\sum_{i,j=1}^d \partial_{q_{ij}}F_\eps(t,x,u,p,q)\,\eta_i\eta_j \geq \lambda |\eta|^2
\end{equation}
for some $\lambda > 0$ independent of $\eps$, $(t,x,u,p,q)$ in the above region and $\eta \in \RR^d$. 

\item $F_\eps$ is four times differentiable in $(x,u,p,q)$ with norm 
\begin{equation}\label{e:condition_F_RPDE}
    \triplenorm{F_\eps}_{\mathcal F,\mathrm{RPDE}}:=
    \sup_{\substack{x\in\overline{\widehat{\oO}}, (u,p,q)\in \dD_3\\ |\beta|\leq 4}}
\bigl\|D_{(x,u,p,q)}^\beta F_\eps(\cdot\,,x,u,p,q)\bigr\|_{\cC^{\alpha}([0,T])}<+\infty\;.
\end{equation}

\item $G_\eps$ is three times differentiable in $(u,p)$. Furthermore, each derivative $D_{(u,p)}^\beta G_\eps$ with $|\beta|\leq3$ is $\cC^{1+2\alpha}$ in $x$ with norm
\begin{equation}\label{e:condition_G_RPDE}
\triplenorm{G_\eps}_{\mathcal G}:=
\sup_{\substack{(u,p)\in \dD_2\\ |\beta|\leq 3}}
\bigl\|D_{(u,p)}^\beta G_\eps(\cdot\,,u,p)\bigr\|_{\cC^{1+2\alpha}(\overline{\widehat{\oO}})} <+\infty\;.
\end{equation}
\item 
    \(H\in\cC^7_b(\overline{\widehat{\oO}}\times\RR\times\RR^d)\). Moreover, there exists $L_\eps\in\cC_b^3 \bigl(\overline{\widehat{\oO}}\times\dD_2;\RR^{1\times m}\bigr)$ such that 
    \begin{equation} \label{e:condition_GH_factorization}
        -(D_x G_\eps)(D_p H)^\sft+(D_u G_\eps)\big(H-(D_p H)p\big)^\sft
        +(D_p G_\eps)\big((D_x H)^\sft+p(D_u H)^\sft\big)
        =G_\eps L_\eps
    \end{equation}
    holds for all
    \((x,u,p)\in\overline{\widehat{\oO}}\times\dD_2\).
\item $\rho^0\in \cC^{4+2\alpha}(\overline{\widetilde\oO})$ satisfies the compatibility condition
\begin{equation*}
G_\eps\bigl(x,\rho^0(x),\nabla \rho^0(x)\bigr)=0\;,\quad x\in\partial\widetilde\oO\;,
\end{equation*}
and the uniform non–tangentiality condition: there exists $c>0$ such that
\begin{equation} \label{e:condition_G_non_tangent_RPDE}
\inf_{x \in \d \widetilde{\oO},\eps\in[0,1]} \Big| \sum_{i=1}^d \d_{p_i} G_\eps \big( x,\rho^0(x),\nabla \rho^0(x) \big) \nu_i(x) \Big| \geq c\;,
\end{equation}
where $\nu(x)$ denotes the unit outward normal on $\partial\widetilde\oO$. 
\end{enumerate}
Then for every $\eps$, there exists $\tau_\eps > 0$ such that there is a unique solution to \eqref{e:RPDE} induced by $\widetilde{\oO}$. 

If furthermore $|\!|\!| F_\eps |\!|\!|$ and $|\!|\!| G_\eps |\!|\!|$  are all uniformly bounded in $\eps$, then one can choose the above existence time $\tau_\eps$ independent of $\eps$. Moreover, for every open set $\oO \Subset \widetilde{\oO}$, there exists $\tau>0$ independent of $\eps$ such that 
\begin{equation*}
    \|\rho_\eps-\rho_0\|_{\cC^{0,2} ([0,\tau]\times\overline{\oO})} \lesssim \triplenorm{F_\eps-F_0}_{\mathcal F,\mathrm{RPDE}} +\triplenorm{G_\eps-G_0}_{\mathcal G} +d_\alpha(\W^\eps,\W)\;.
\end{equation*}
\end{thm}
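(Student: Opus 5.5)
The plan is to reduce everything to Lemma~\ref{lem:transform}: solve the transformed fully nonlinear parabolic problem \eqref{e:RPDE_transformed} on the fixed domain $\widetilde{\oO}$ by classical Schauder theory, with bounds uniform in $\eps$. Then pull the solution back through the rough characteristics $\Theta^\eps$. The characteristic system \eqref{e:characteristic} has $\cC_b^7$ coefficients built from $H$ (at most one derivative is lost). Standard RDE flow theory therefore gives that $\bm\theta\mapsto\Theta^\eps_t(\bm\theta)$ is a $\cC^5$-diffeomorphism. The flow, its derivatives up to order four, and the time integral $\int_0^t (D_uH)^\sft(\Theta^\eps_s)\,\md\W^\eps_s$ are all $\alpha$-H\"older in $t$. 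They are bounded by a constant depending only on $\|\W^\eps\|_{\emC^\alpha}$, and they are Lipschitz in the rough path with respect to $d_\alpha$. Since $X^\eps_0=x$ and $P^\eps_0=p$, the matrix $D_xX^\eps_t+D_uX^\eps_t p^\sft+D_pX^\eps_t q$ equals $\id+O(t^\alpha)$ on bounded sets. Hence $Q^\eps_t$ in \eqref{eq:Q_matrix_def} is well defined for $t\le\tau$ small, with $Q^\eps_0(\bm\theta,q)=q$. It follows that $\widetilde F_\eps$ inherits from $F_\eps$ the uniform ellipticity \eqref{e:condition_F_elliptic_RPDE} (with $\lambda/2$), $\cC^\alpha$-in-time regularity and four $(x,u,p,q)$-derivatives, on a slightly shrunk version of $\dD_3$. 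Likewise $\widetilde G_\eps$ inherits \eqref{e:condition_G_RPDE} and the non-tangentiality \eqref{e:condition_G_non_tangent_RPDE} (with $c/2$) for small $t$. All bounds are uniform in $\eps$ and Lipschitz in $(F_\eps,G_\eps,\W^\eps)$.

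The key structural point is the boundary condition. The transformed condition $\widetilde G_\eps(t,x,v,\nabla v)=G_\eps(\Theta^\eps_t(x,v,\nabla v))$ is time dependent, but only rough in time. A classical oblique-derivative theory needs the compatibility $\widetilde G_\eps(t,\cdot)=0$ at $t=0$, and also regularity of $t\mapsto\widetilde G_\eps(t,\bm\theta)$ near the zero set. Here I use \eqref{e:condition_GH_factorization}. Differentiating $g_t:=G_\eps(\Theta^\eps_t(\bm\theta))$ along \eqref{e:characteristic}, the left side of \eqref{e:condition_GH_factorization} is exactly the $\md\W$-coefficient. This gives the linear RDE $\md g_t=g_t\,L_\eps(\Theta^\eps_t)\,\md\W^\eps_t$, so
\begin{equation*}
\widetilde G_\eps(t,\bm\theta)=G_\eps(\bm\theta)\exp\Big(\int_0^t L_\eps(\Theta^\eps_s(\bm\theta))\,\md\W^\eps_s\Big).
\end{equation*}
The exponential factor is nonvanishing. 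The boundary condition is therefore equivalent to the time-independent condition $G_\eps(x,v,\nabla v)=0$, which removes the rough time dependence from the boundary. Dividing by the factor is legitimate since $L_\eps\in\cC_b^3$.

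Next I solve \eqref{e:RPDE_transformed} with the boundary condition $G_\eps(x,v,\nabla v)=0$, using the standard fully nonlinear parabolic theory (Lunardi's linearisation method, or Lieberman) for oblique nonlinear boundary conditions. I linearise around $\rho^0$ and apply a contraction in $\cC^{2+2\alpha}_\pa$, using only $\cC^\alpha$ regularity in time of the coefficients. This requires time-H\"older dependence of order $\alpha\ge$ half the spatial exponent $2\alpha$, which holds. It gives a unique local solution $v_\eps$, on a time interval determined by the norms above, so the interval is uniform in $\eps$. To reach the regularity required in Lemma~\ref{lem:transform}(2), namely $\cC^4_x$ and $\nabla^k v_\eps\in\cC^\alpha_t$ for $k\le3$, I differentiate the equation in $x$ twice and apply interior Schauder estimates. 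This uses four derivatives of $F_\eps$ and $\rho^0\in\cC^{4+2\alpha}$; near $\d\widetilde\oO$ only the $\cC^2$ control is needed, which explains the restriction to $\oO\Subset\widetilde\oO$. Lipschitz dependence of the solution map on data then gives
\begin{equation*}
\|v_\eps-v_0\|_{\lL^\infty_t\cC^2_x}\lesssim \triplenorm{F_\eps-F_0}_{\mathcal F,\mathrm{RPDE}}+\triplenorm{G_\eps-G_0}_{\mathcal G}+d_\alpha(\W^\eps,\W),
\end{equation*}
since the transforms depend Lipschitz-continuously on these quantities. Lemma~\ref{lem:transform}(1)--(3) then yields existence, the stability estimate via \eqref{eq:PDE_stability}, and the inclusion $\oO\Subset\oO_t^\eps$. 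Uniqueness follows from the converse in the Remark after Lemma~\ref{lem:transform}: any solution transforms into a classical solution of \eqref{e:RPDE_transformed}, which is unique.

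I expect the main obstacle to be the parabolic step with merely $\cC^\alpha$-in-time coefficients and a nonlinear oblique boundary condition. I must check carefully that the Schauder/linearisation theory gives $\cC^{2+2\alpha}_\pa$ solutions, and Lipschitz dependence on $(\widetilde F,\widetilde G)$ in these weak norms, uniformly in $\eps$. If direct citation is unavailable, I would first prove the linear oblique problem with $\cC^\alpha_t\cC^{2\alpha}_x$ coefficients by freezing coefficients and using the time-dependent half-space estimates. I would then close the nonlinear fixed point on a short interval around $\rho^0$.
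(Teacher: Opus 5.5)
Your proposal is correct and follows essentially the same route as the paper: you use the factorization \eqref{e:condition_GH_factorization} to replace the rough time-dependent boundary condition $\widetilde G_\eps$ by $G_\eps=0$, check ellipticity of $\widetilde F_\eps$ for small $t$, solve the transformed problem by the Lunardi-type fixed point of Lemma~\ref{lem:nonlinear_PDE}, upgrade interior regularity by differentiating twice, and pull back through Lemma~\ref{lem:transform} using the RDE stability estimates. Your explicit formula $\widetilde G_\eps(t,\bm\theta)=G_\eps(\bm\theta)\exp\big(\int_0^t L_\eps(\Theta^\eps_s(\bm\theta))\,\md\W^\eps_s\big)$ makes the two boundary conditions equivalent in both directions, which is slightly cleaner than the paper's argument: the paper shows only $G_\eps=0\Rightarrow\widetilde G_\eps=0$ and then invokes uniqueness of the time-dependent problem, which is not established independently (you also handle RPDE uniqueness via the converse remark, which the paper omits).
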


\section{Derivation of the stochastic curvature flow}
\label{sec:derivation_well_posedness_smcf}

In this section, starting from an ansatz for the solution $u_\eps$ in terms of the signed-distance function $\rho_\eps$, we derive an equation for $\rho_\eps$. This ansatz is based on the modified profile $\uU$ specified in Lemma~\ref{lem:CHL} below. It is inspired by but different from the ones in \cite{FP24, Web09}. 

\subsection{Linearization and perturbation of the profile equation}

Let \(e\in\RR^d\setminus\{0\}\). Recall the quantity $a_e$ defined in \eqref{e:def_a_W_lambda}. Let $\overline{\uU}(\cdot,e)$ be the unique increasing solution to
\begin{equation} \label{e:def_U0}
\big(a_e(\overline{\uU}) \,\overline{\uU}_z\big)_z + f(\overline{\uU})=0\;, \quad \overline{\uU}(0)=0\;, \quad \overline{\uU}(\pm\infty)=\pm1\;.
\end{equation}
We refer to \cite[equations~(20) and~(21)]{EFHPS22} and \cite[equation~(2.4)]{FP24} for existence, uniqueness, and properties of the solution to the above equation. Set
\begin{equation} \label{eq:defn_Y}
Y(z,e):= a_e \big(\overline{\uU}(z,e) \big) \, \overline{\uU}_z(z,e)\;.
\end{equation}
For $\varphi=\varphi(z)$, define the (one-dimensional) linear operator
\begin{equation*}
\aA_e\varphi:= - \Big( \d_{z}^2 \big( a_e(\overline{\uU}) \, \varphi \big) + f'(\overline{\uU})\varphi \Big)\;.
\end{equation*}
Its $\lL^2$-adjoint $\aA_e^*$ is given by
\begin{equation*}
\aA_e^*\psi = - \Big( a_e(\overline{\uU}) \psi'' + f'(\overline{\uU}) \psi \Big)\;.
\end{equation*}
Then we have
\begin{equation*}
\aA_e \,\overline{\uU}_z = 0\;, \quad \aA_e^*Y = 0\;.
\end{equation*}

\begin{lem}\label{lem:profile_refined_asymptotics}
Let \(e\in\RR^d\setminus\{0\}\). Recall \(\overline{\uU}(\cdot,e)\) denotes the unique increasing solution to \eqref{e:def_U0}. Set
\begin{equation} \label{eq:defn_gamma}
    \gamma_\pm(e):=\sqrt{\frac{-f'(\pm1)}{a_e(\pm1)}}\;.
\end{equation}
Then there exist positive constants \(\kappa_\pm(e)\) such that, for all \(z\geq0\), we have
\begin{equation} \label{eq:Ubar_asymptotics}
    \begin{split}
    1\mp \,\overline{\uU}(\pm z,e) &= \kappa_\pm(e) e^{-\gamma_\pm(e)z} + \oO \big( e^{-2\gamma_\pm(e)z} \big)\;,\\
    \overline{\uU}_z(\pm z,e) &= \gamma_\pm(e)\kappa_\pm(e) e^{-\gamma_\pm(e)z} + \oO\big( e^{-2\gamma_\pm(e)z} \big)\;.
    \end{split}
\end{equation}
\end{lem}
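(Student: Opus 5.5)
The plan is to reduce the profile equation \eqref{e:def_U0} to a first-order autonomous ODE and then read off the asymptotics at $\pm\infty$. Fix $e$ and write $a=a_e$. Multiplying \eqref{e:def_U0} by $Y=a(\overline{\uU})\overline{\uU}_z$ and integrating from $-\infty$ (where $\overline{\uU}\to-1$ and $\overline{\uU}_z\to0$), one gets the first integral
\begin{equation*}
\tfrac12 Y^2 = -\int_{-1}^{\overline{\uU}} a(s)f(s)\,\md s = \tfrac12 W_e(\overline{\uU})\;,
\end{equation*}
so that $Y=\sqrt{W_e(\overline{\uU})}$, using that $\overline{\uU}$ is increasing. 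The equipotential condition \eqref{eq:cond_Df_equi} makes this consistent at $+\infty$ as well. Hence $\overline{\uU}$ solves the autonomous equation $\overline{\uU}_z = g(\overline{\uU})$ with $g(u):=\sqrt{W_e(u)}/a(u)$, where $g>0$ on $(-1,1)$.

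Next we study $g$ near $u=1$; the case $u=-1$ is symmetric. Since $W_e(1)=0$, $W_e'(1)=-2a(1)f(1)=0$ and $W_e''(1)=-2a(1)f'(1)>0$ by \eqref{eq:cond_f_bistable}, Taylor expansion gives $W_e(u)=-a(1)f'(1)(1-u)^2\,(1+h(u))$ with $h$ smooth and $h(1)=0$. Therefore $g(u)=(1-u)\,\tilde g(u)$ with $\tilde g$ smooth and
\begin{equation*}
\tilde g(1)=\sqrt{-a(1)f'(1)}/a(1)=\gamma_+(e)\;.
\end{equation*}
Setting $w:=1-\overline{\uU}$, the equation becomes $w_z=-\gamma_+ w + c\,w^2+\oO(w^3)$, which is a smooth ODE with a hyperbolic fixed point at $0$. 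Separating variables,
\begin{equation*}
\int \frac{\md w}{w\tilde g(1-w)} = -z + C\;,
\end{equation*}
and the integrand equals $\frac{1}{\gamma_+ w}+\phi(w)$ with $\phi$ smooth near $0$. This yields $\log w = -\gamma_+ z + \gamma_+ C' - \gamma_+\Phi(w)$ with $\Phi$ smooth and $\Phi(0)=0$. Hence $w=\kappa_+ e^{-\gamma_+ z}\bigl(1+\oO(w)\bigr)$, where $\kappa_+:=e^{\gamma_+ C'}$ is determined by the normalisation $\overline{\uU}(0)=0$. Since $w\lesssim e^{-\gamma_+ z}$ a priori from the same identity, this gives the first line of \eqref{eq:Ubar_asymptotics} for large $z$. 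For $z$ in a bounded set the statement is trivial, by enlarging the implicit constant and using positivity of $\kappa_+$.

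For the derivative we use $\overline{\uU}_z=g(\overline{\uU})=w\tilde g(1-w)=\gamma_+ w+\oO(w^2)$ and substitute the expansion of $w$. This gives $\gamma_+\kappa_+ e^{-\gamma_+ z}+\oO(e^{-2\gamma_+ z})$. The $-$ side is identical, with $w=1+\overline{\uU}(-z)$ and $W_e''(-1)=-2a(-1)f'(-1)$.

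The main obstacle is ensuring uniform control of the $\oO(w^2)$ remainder, that is, smoothness of $\tilde g$ near $\pm1$. This rests on the exact double zero of $W_e$ at $\pm1$, which needs both $f(\pm1)=0$ and the equipotential condition at $+1$, so that $W_e(1)=0$. Two further points must hold: $W_e>0$ on $(-1,1)$, so that $g$ does not vanish in the interior; and $a>0$, which follows from Assumption~\ref{ass:assumption_overall}. The first is standard; see \cite{FP24}.
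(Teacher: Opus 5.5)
Your proposal is correct and follows essentially the paper's argument: the first integral $a_e(\overline{\uU})\,\overline{\uU}_z=\sqrt{W_e(\overline{\uU})}$, then separation of variables with integrand $\frac{1}{\gamma_+(e)(1-s)}$ plus a bounded (in your version, smooth) remainder coming from the double zero of $W_e$ at $1$, then exponentiation to define $\kappa_+(e)$, and finally the derivative asymptotics read off from the first-order identity. The only cosmetic difference is that you phrase it as the ODE $w_z=-w\,\tilde g(1-w)$ and treat bounded $z$ separately, whereas the paper integrates $z=\int_0^{\overline{\uU}(z)} a_e(s)/\sqrt{W_e(s)}\,\md s$ directly from the normalisation point.
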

\begin{proof}
We prove the claims for $+ z$; the argument for $-z$ is identical after replacing \(1\) by \(-1\). Since $e \in \RR^{d} \setminus \{0\}$ is fixed, we omit the notation $e$ in various quantities (such as $\overline{\uU}$, $\gamma_+$, $\kappa_+$, etc.). Recall the definition of $W_e$ in \eqref{e:def_a_W_lambda}. Differentiating the quantity \((a_e(\overline{\uU})\,\overline{\uU}_z)^2 - W_e(\overline{\uU})\) in \(z\) and using the profile equation \eqref{e:def_U0}, we see this quantity is identically constant. Furthermore, it goes to $0$ as $z \rightarrow -\infty$. Hence, this constant is zero. This gives the first-order identity
\begin{equation}\label{e:first_order_identity}
    Y(z) = a_e(\overline{\uU}(z)) \, \overline{\uU}_z(z)=\sqrt{W_e \big( \overline{\uU}(z) \big)}\;.
\end{equation}
Using that $\overline{\uU}$ is increasing and that $\overline\uU(0) = 0$, we get
\begin{equation} \label{eq:z_inverse_expression}
    z=\int_0^{\overline{\uU}(z)}\frac{a_e(s)}{\sqrt{W_e(s)}}\,\md s\;.
\end{equation}
Since \(W_e'(1)=-2a_e(1)f(1) = 0\), \(W_e''(1) = -2a_e(1)f'(1)>0\), and $W_e(1) = 0$ by the equipotential condition \eqref{eq:cond_Df_equi}, Taylor expanding $W_e$ near $1$ gives
\begin{equation}\label{e:taylor_We}
    W_e(s)= - a_e(1) f'(1) (1-s)^2 + \oO \big( (1-s)^3 \big)\;,
\end{equation}
Therefore, we have
\begin{equation*}
    \frac{a_e(s)}{\sqrt{W_e(s)}} = \frac{1}{\gamma_+(e)(1-s)}+\oO(1)\;,
\end{equation*}
where $\gamma_+ (e)$ is given as in \eqref{eq:defn_gamma}. Integrating the above quantity from $0$ to $\overline{\uU}(z)$ and plugging back to \eqref{eq:z_inverse_expression}, we deduce there exists a constant \(b_+\) such that
\[
    z=-\frac{1}{\gamma_+}
    \log\bigl(1-\overline{\uU}(z)\bigr)+b_+ +\oO\bigl(1-\overline{\uU}(z)\bigr)\;.
\]
Multiplying $-\gamma_+$ on both sides above and then taking the exponential, we get
\begin{equation*}
    1-\overline{\uU}(z)
    = \kappa_+e^{-\gamma_+z}
    \exp\!\left(\oO\bigl(1-\overline{\uU}(z)\bigr)\right) = \Big( 1 + \oO \big(1 - \overline{\uU}(z) \big) \Big) \kappa_+e^{-\gamma_+z}
\end{equation*}
with $\kappa_+ := e^{\gamma_+ b_+}$. This proves the asymptotics for $1 - \overline{\uU}(z)$ in \eqref{eq:Ubar_asymptotics}. 

The asymptotics for $\overline{\uU}_z$ in \eqref{eq:Ubar_asymptotics} follow directly from the identity \eqref{e:first_order_identity} and the Taylor expansion \eqref{e:taylor_We}. The proof for $1 + \overline{\uU}(-z)$ and $\overline{\uU}_z (-z)$ for $z>0$ is the same. So we omit the details. 
\end{proof}

\begin{lem}\label{lem:linearized}
Let \(e\in\RR^d\setminus\{0\}\) and \(G(\cdot,e)\in\lL^\infty(\RR)\). Consider
\begin{align*}
-\aA_e\varphi = G(\cdot,e)\quad\text{in }\RR\;,
\quad
\varphi\in\lL^\infty(\RR)\;,
\quad
\varphi(0)=0\;.
\end{align*}
Then there exists a unique solution $\varphi$ if and only if $\bracket{G,Y}=0$. 

Let \(\gamma_\pm(e)\) be as in Lemma~\ref{lem:profile_refined_asymptotics}. Assume in addition that there exist constants \(G(\pm \infty, e)\) and \(\lambda\in 
\left(0, \min\left\{\gamma_+(e),\gamma_-(e)\right\}\right) \) such that
\begin{align}\label{e:decay_G}
|G(\pm z,e)-G(\pm \infty, e)|\lesssim e^{-\lambda z}\;,\quad z\ge 0\;.
\end{align}
Then \(\varphi(z,e)\) admits limits \(\varphi(\pm \infty, e)\) as \(z\to\pm\infty\), and we have
\begin{equation*}
|\varphi( \pm z,e)-\varphi(\pm \infty, e)| + |\varphi_{zz}(\pm z,e)| \lesssim e^{-\lambda z}\;, \quad z\ge 0\;.
\end{equation*}
\end{lem}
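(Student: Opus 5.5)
The plan is to reduce the problem to a first-order equation by exploiting that $\aA_e$ is in divergence-like form. Write $a(z) := a_e(\overline{\uU}(z))$ and $w := a\,\varphi$. Then $-\aA_e\varphi = G$ reads
\begin{equation*}
w'' + \frac{f'(\overline{\uU})}{a}\, w = G\;.
\end{equation*}
Differentiating the profile equation \eqref{e:def_U0} in $z$ shows that $Y = a\,\overline{\uU}_z$ solves the homogeneous version $Y'' + (f'(\overline{\uU})/a)Y = 0$. This is consistent with $\aA_e \overline{\uU}_z = 0$ and with $\aA_e^* Y=0$: indeed $\aA_e^*\psi=0$ means $a\psi''+f'\psi=0$, and $Y = a\overline{\uU}_z$ satisfies $a Y'' + f' Y = 0$ because $(aY')' \ldots$ — I will verify this identity carefully first. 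Assuming $Y$ is a positive homogeneous solution (positive by monotonicity of $\overline{\uU}$ and by \eqref{e:first_order_identity}), I will use reduction of order. The Wronskian of the operator $w\mapsto w''+(f'/a)w$ is constant, and writing $w = Y h$ gives $(Y^2 h')' = Y G$. Hence
\begin{equation*}
h'(z) = Y(z)^{-2}\Big( C + \int_{-\infty}^z Y G\Big)\;.
\end{equation*}

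For existence and uniqueness, Lemma~\ref{lem:profile_refined_asymptotics} gives $Y \sim c_\pm e^{-\gamma_\pm|z|}$, so $Y^{-2}$ grows like $e^{2\gamma_\pm |z|}$. Boundedness of $w$ as $z\to-\infty$ forces $C=0$; otherwise $Yh \sim C e^{\gamma_-|z|}$. As $z\to+\infty$ we have $\int_{-\infty}^z YG \to \langle G,Y\rangle$. If this limit is nonzero, then $h' \sim \langle G, Y\rangle e^{2\gamma_+ z}$ and $w$ grows like $e^{\gamma_+ z}$, which is unbounded. If it is zero, then $\int_{-\infty}^z YG = -\int_z^\infty YG = O(e^{-\gamma_+ z})$, so $h' = O(e^{\gamma_+ z})$ and $h = O(e^{\gamma_+ z})$, which makes $w = Yh$ bounded. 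Near $-\infty$ the same estimate works with $\int_{-\infty}^z$. The remaining freedom in $h$ is an additive constant, i.e.\ adding a multiple of $Y$ to $w$, or equivalently adding $\overline{\uU}_z$ to $\varphi$. Since $\overline{\uU}_z(0)>0$, the normalisation $\varphi(0)=0$ fixes this constant uniquely. Any other bounded homogeneous solution (the second one, $Y\int Y^{-2}$) grows exponentially, so this gives uniqueness.

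For the decay statement, I will write $G = G(\pm\infty) + \tilde G$ on $\pm z>0$, with $|\tilde G|\lesssim e^{-\lambda z}$. Consider the constant $\varphi_\infty := -G(+\infty)/f'(1)$. Since $a\to a_e(1)$ and $f'(\overline{\uU})\to f'(1)<0$ exponentially fast at rate $\gamma_+>\lambda$, the function $\psi=\varphi-\varphi_\infty$ satisfies $(a\psi)'' + f'(\overline{\uU})\psi = \tilde G + O(e^{-\gamma_+ z})$, whose right-hand side is $O(e^{-\lambda z})$. Writing the solution with variation of constants using the fundamental pair $e^{\pm\gamma_+ z}$ (perturbed), and noting that a bounded solution can have no $e^{\gamma_+ z}$ component, we get $|\psi|\lesssim e^{-\lambda z}$ because $\lambda<\gamma_+$. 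The convolution $\int e^{-\gamma_+|z-s|}e^{-\lambda s}\,\md s$ is then $\lesssim e^{-\lambda z}$. Finally, the bound on $\varphi_{zz}$ follows by solving the equation for $\varphi''$: expanding $(a\varphi)''$, the term $a\varphi''$ equals $G - f'\varphi - 2a'\varphi' - a''\varphi$, which tends to $G(\infty)+f'(1)\varphi_\infty = 0$ at rate $e^{-\lambda z}$, once $\varphi'$ is shown to decay (by the same integral representation).

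The main obstacle I expect is the precise decay rates in the reduction-of-order integrals, namely checking that the cancellation under $\langle G,Y\rangle=0$ really yields bounds uniform and exponential at rate $\lambda$. This requires tracking the error terms $O(e^{-2\gamma z})$ from Lemma~\ref{lem:profile_refined_asymptotics}.
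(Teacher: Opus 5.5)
Your proposal is correct. For the solvability part it agrees with the paper in substance. For the decay part it takes a different route.

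\textbf{Solvability.} The paper imports the criterion and the representation $\varphi(z)=\overline{\uU}_z(z)\int_0^z Y^{-2}(\eta)\int_{-\infty}^{\eta}GY\,\md\zeta\,\md\eta$ from \cite[Lemma~2.1]{FP24}. Your reduction of order derives exactly this formula: $w=a\varphi=Yh$, $(Y^2h')'=YG$, the constant $C=0$ forced at $-\infty$, and $h(0)=0$ forced by $\varphi(0)=0$. So here you are only making the cited input self-contained.

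\textbf{Decay.} The paper inserts the refined asymptotics of Lemma~\ref{lem:profile_refined_asymptotics} for $Y$, $\overline{\uU}_z$ and $\int_\eta^\infty Y$ into this formula. It reads off $\varphi(+\infty)$ from the leading term (the constants $\kappa_+$ cancel) and differentiates the formula to get the decay of $\varphi'$.

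You instead subtract the limiting constant and treat $\tilde w=a(\varphi-\varphi_\infty)$ as a bounded solution of $\tilde w''-\gamma_+^2\tilde w=\tilde R$ on $(0,\infty)$, with $\tilde R=\oO(e^{-\lambda z})$. The bounded solutions there are $ce^{-\gamma_+z}-\frac{1}{2\gamma_+}\int_0^\infty e^{-\gamma_+|z-s|}\tilde R(s)\,\md s$. Since $\lambda<\gamma_+$, this gives $|\tilde w|+|\tilde w'|\lesssim e^{-\lambda z}$ at once.

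This argument is softer than the paper's computation. It only needs $a_e(\overline{\uU})$, its first two $z$-derivatives, and $f'(\overline{\uU})$ to converge at some rate exceeding $\lambda$; it does not need the precise second-order expansions. In particular, the obstacle you anticipate in your last paragraph does not arise on your route. The orthogonality $\bracket{G,Y}=0$ enters only through boundedness of $\varphi$.

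Make that dependence explicit, though. The coefficient error $(f'(\overline{\uU})/a+\gamma_+^2)\tilde w$ that you move into $\tilde R$ is $\oO(e^{-\gamma_+z})$ only because $\tilde w$ is already known to be bounded. Likewise the term $a''\varphi_\infty$ needs the decay of $\overline{\uU}_{zz}$.

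\textbf{Sign error.} The limiting constant has the wrong sign. From $a\varphi''=G-f'(\overline{\uU})\varphi-2a'\varphi'-a''\varphi$, the limit forces $G(+\infty)-f'(1)\varphi_\infty=0$. Hence $\varphi_\infty=G(+\infty)/f'(1)=-G(+\infty)/(a_e(1)\gamma_+^2)$, as in the paper. With your choice $-G(+\infty)/f'(1)$, the forcing for $\psi$ tends to $2G(+\infty)$ and does not decay. Your final identity $G(\infty)+f'(1)\varphi_\infty=0$ carries the same sign slip.
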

\begin{proof}
The Fredholm alternative follows from \cite[Lemma~2.1]{FP24} and scaling in $|e|$. So we focus on asymptotic properties of the solution. As before, we treat the case $+z$ in detail; the situation for $-z$ is essentially identical. Similar as before, we omit the notation $e$ in various quantities.

By \cite[Lemma~2.1]{FP24}, under \(\bracket{G,Y}=0\), the solution $\varphi$ has the expression
\begin{equation} \label{eq:expression_varphi}
\varphi(z) = \overline{\uU}'(z) \int_0^z \frac{1}{Y^2(\eta)} \left(\int_{-\infty}^{\eta} G(\zeta)Y(\zeta)\,\md \zeta\right)\,\md \eta\;.
\end{equation}
We first note that by Lemma~\ref{lem:profile_refined_asymptotics} and the definition of $Y$ in \eqref{eq:defn_Y}, for $\eta > 0$, we have
\begin{equation*}
    Y(\eta) = a_e \big( \overline{\uU}(\eta) \big)\, \overline{\uU}'(\eta) =a_e(1) \gamma_+ \, \kappa_+ \, e^{- \gamma_+ \eta} + \oO \big( e^{-2 \gamma_+ \eta} \big)\;.
\end{equation*}
Using orthogonality between $G$ and $Y$ and the decay of $G$ in \eqref{e:decay_G}, we have
\begin{align*}
\int_{-\infty}^{\eta} G(\zeta)Y(\zeta)\,\md \zeta = -\int_{\eta}^{+\infty} G(\zeta) Y(\zeta)\,\md \zeta = - G(+\infty) \int_{\eta}^{+\infty} Y(\zeta)\,\md \zeta +\oO \big( e^{-(\lambda + \gamma_+)\eta} \big)\;.
\end{align*}
By definition of $Y$ in \eqref{eq:defn_Y} and a change of variable together with Taylor expansion, we have
\begin{equation*}
\int_{\eta}^{+\infty} Y(\zeta) \,\md \zeta = \int_{\overline{\uU}(\eta)}^{1} a_e(s) \, \md s = a_e(1) \, \big( 1 - \overline{\uU}(\eta) \big) + \oO \big( (1 - \overline{\uU}(\eta) )^2 \big)\;.
\end{equation*}
Applying again Lemma~\ref{lem:profile_refined_asymptotics}, we get
\begin{equation*}
    \frac{1}{Y^2(\eta)} \cdot \int_{\eta}^{+\infty} Y(\zeta) \,\md \zeta = \frac{1}{a_e(1) \, \gamma_+^2 \, \kappa_+} \, e^{\gamma_+\eta} + \oO(1)\;.
\end{equation*}
Plugging all the above into the expression of $\varphi$, we obtain
\begin{align*}
\varphi(z) = -\frac{G(+\infty)}{a_e(1) \, \gamma_+^2 \, \kappa_+} \cdot \overline{\uU}'(z) \int_{0}^{z} e^{\gamma_+ \eta} \,\md \eta \, + \, \overline{\uU}'(z) \int_{0}^{z} \oO \Big( e^{(\gamma_+ - \lambda) \eta} \Big)\, \md \eta\;.
\end{align*} 
Using again Lemma~\ref{lem:profile_refined_asymptotics} for $\overline{\uU}'(z)$, we get
\begin{equation*}
    \big| \varphi(z) - \varphi(+\infty) \big| \lesssim e^{- \lambda z} 
\end{equation*}
with $\varphi(+\infty) = - \frac{G(+\infty)}{a_e(1) \, \gamma_+^2} = \frac{G(+\infty)}{f'(1)}$. 

To get the decay for derivatives of $\varphi$, we first note that by the definition of $\overline{\uU}$ in \eqref{e:def_U0} and Lemma~\ref{lem:profile_refined_asymptotics}, we have
\begin{equation}\label{e:expansion_U0''}
    \overline{\uU}''(z) = -\gamma_+^2 \, \kappa_+ \, e^{-\gamma_+z} +\oO \big( e^{-2\gamma_+ \,z} \big)\;.
\end{equation}
Then differentiating the expression \eqref{eq:expression_varphi} once and arguing as above, we obtain
\begin{equation} \label{e:decay_varphi'}
    |\varphi'(z)| \lesssim e^{-\lambda z}\;.
\end{equation}
Finally, the desired bound for $\varphi''$ can be obtained by expanding the equation $-\aA_e \varphi = G$ and employing the asymptotics for $G$, $\overline\uU$, $\varphi$ and $\varphi'$ above. 

This proves the desired bounds for $+z$. The corresponding claims for $-z$ can be dealt with in exactly the same way; hence we omit the arguments. This completes the proof of the lemma. 
\end{proof}

Since \(f(\pm1)=0\) and \(f'(\pm1)<0\), by the implicit function theorem, there exists \(\delta_*>0\) and unique smooth functions $U_\pm:(-\delta_*,\delta_*)  \rightarrow \RR$ such that
\begin{equation}\label{eq:root_shift_f}
    f\bigl(U_\pm(\delta)\bigr)+\delta=0\;, \quad U_\pm(0)=\pm1\;, 
\end{equation}
and \(f'(U_\pm(\delta))<0\) for every \(|\delta|<\delta_*\). 

\begin{defn} \label{defn:function_space_psi}
    Let $\mathbf{\Psi}$ be the set of functions $\psi: \RR \times (\RR^d \setminus \{0\})$ such that for every $R>1$ and every multi-index $\alpha$, there exist $\gamma_R > 0$ and $C(R, |\alpha|) > 0$ such that
    \begin{equation*}
        |(\d_e^\alpha \psi)(z,e)| \leq C(R,|\alpha|) \cdot e^{-\gamma_R |z|}\;,
    \end{equation*}
    uniformly over all $z \in \RR$ and all $e \in \RR^d$ with $R^{-1} < |e| < R$. 
\end{defn}

For $\lambda>0$, we further define the function space $\xX_\lambda$ by
\begin{equation*}
    \mathcal X_\lambda := \Bigl\{u\in\cC^2(\RR): \lim_{z\to\pm\infty} u(z)=:u(\pm \infty) \; \text{exist, and } \sup_{z\in\RR} \big( e^{\lambda|z|}|u''(z)| \big) < +\infty \Bigr\}
\end{equation*}
with norm
\begin{equation*}
    \|u\|_{\mathcal X_\lambda} := |u(+\infty)| + |u(-\infty)| + \sup_{z\in\RR} \big( e^{\lambda|z|}|u''(z)| \big)\;.
\end{equation*}
We have the following important lemma. 

\begin{lem}\label{lem:CHL}
Let $(\psi_{ij})_{i,j=1}^{d}$ be a collection of functions in $\mathbf{\Psi}$ given in Definition~\ref{defn:function_space_psi}. For every \(R>1\), there exists \(\delta_0>0\) and $\lambda>0$ such that for every $e \in \RR^d$ with $R^{-1} < |e| < R$, every $(q_{ij})_{i,j=1}^d$ and $\delta \in \RR$ with $|q| + |\delta| < \delta_0$, there is a unique pair
\begin{equation*}
    \big( \uU(\cdot, e, q, \delta), \, \mathfrak{c}(e, q, \delta) \big) \in \xX_\lambda \times \RR
\end{equation*}
satisfying
\begin{equation}\label{TW}
    \begin{cases}
        \big(a_e(\uU)\,\uU_z\big)_z + \mathfrak{c}(e,q,\delta)\,\uU_z + f(\uU) + \displaystyle\sum\limits_{i,j} q_{ij}\,\psi_{ij} + \delta = 0\;,\quad z\in\RR\;,\\[2pt]
        \uU(\pm\infty)=U_{\pm}(\delta)\;,\quad \uU(0)=0\;.
    \end{cases}
\end{equation}
Furthermore, the pair $(\uU, \mathfrak{c})$ is infinitely differentiable in $(e,q,\delta)$ in the above region, and for all $k \leq 2$, $k' \in \NN$ and all multi-indices $\alpha, \beta$, one has
\begin{equation}\label{e:CHL_decay}
\bigl|\partial_z^k\partial_e^{\alpha}\partial_{q}^{\beta}\partial_{\delta}^{k'}\uU(\pm z,e,q,\delta) - \partial_z^k\partial_e^{\alpha}\partial_{q}^{\beta}\partial_{\delta}^{k'} U_{\pm}(\delta)\bigr|
    \lesssim e^{-\lambda z}
\end{equation}
for all \(z\in\RR_+\), $e \in \RR^d$ with $R^{-1} < |e| < R$ and $|q| + |\delta| < \delta_0$. 
\end{lem}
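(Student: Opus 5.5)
We prove Lemma~\ref{lem:CHL} with the implicit function theorem, perturbing around the unperturbed profile $(\overline{\uU}(\cdot,e),0)$. At $(q,\delta)=(0,0)$ the pair $(\overline{\uU},0)$ solves \eqref{TW}, and $\overline{\uU}\in\xX_\lambda$ for every $\lambda<\min\{\gamma_+,\gamma_-\}$ by Lemma~\ref{lem:profile_refined_asymptotics} and \eqref{e:expansion_U0''}. The boundary values depend on $\delta$, so we first subtract a reference function. Fix a smooth monotone cutoff $\chi$ with $\chi=1$ on $[1,\infty)$ and $\chi=0$ on $(-\infty,-1]$. Write
\[
\uU=\overline{\uU}+\big(U_+(\delta)-1\big)\chi+\big(U_-(\delta)+1\big)(1-\chi)+\varphi\;,
\]
with unknown $\varphi\in\xX_\lambda^0:=\{\varphi\in\xX_\lambda:\varphi(\pm\infty)=0\}$. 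We impose $\uU(0)=0$ as a scalar constraint.

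Define
\[
\Phi(\varphi,\mathfrak{c};e,q,\delta):=\big(a_e(\uU)\uU_z\big)_z+\mathfrak{c}\,\uU_z+f(\uU)+\sum_{i,j} q_{ij}\psi_{ij}+\delta\;,
\]
viewed as a map into the weighted space $\mathcal Y_\lambda:=\{g\in\cC(\RR):\sup_z e^{\lambda|z|}|g(z)|<\infty\}$. Three facts show $\Phi$ indeed lands in $\mathcal Y_\lambda$.
\begin{itemize}
\item[(a)] The term $\psi_{ij}\in\mathbf\Psi$ decays exponentially, and we take $\lambda\le\gamma_R$.
\item[(b)] Near $\pm\infty$ the quantity $f(\uU)+\delta$ equals $f(U_\pm(\delta)+\text{decaying})-f(U_\pm(\delta))$, which decays at rate $\lambda$. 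This uses \eqref{eq:root_shift_f}.
\item[(c)] All derivative terms decay, because $\uU''\in\mathcal Y_\lambda$ and $\uU'$ is the integral of $\uU''$ from $\pm\infty$.
\end{itemize}
Smoothness of $\Phi$ in $(\varphi,\mathfrak{c},e,q,\delta)$ follows from smoothness of $a_e,f,U_\pm$ and of $\overline{\uU}$ in $e$, where $\overline{\uU}$ is given implicitly via \eqref{eq:z_inverse_expression}. Composition with smooth functions is smooth on these weighted $\cC^2$-type spaces.

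The key step is invertibility of the linearisation at $(0,0;e,0,0)$. This linearisation is
\[
(\varphi,\mathfrak{c})\mapsto\big(-\aA_e\varphi+\mathfrak{c}\,\overline{\uU}_z,\ \varphi(0)\big)\;,\qquad \xX_\lambda^0\times\RR\to\mathcal Y_\lambda\times\RR\;.
\]
Given $(g,c_0)$, choose $\mathfrak{c}=\bracket{g,Y}/\bracket{\overline{\uU}_z,Y}$. The denominator equals $\int a_e\overline{\uU}_z^2>0$, so the solvability condition of Lemma~\ref{lem:linearized} holds for $G=g-\mathfrak{c}\overline{\uU}_z$. That lemma then gives a bounded solution, unique up to adding multiples of $\overline{\uU}_z$; we fix this multiple using $\varphi(0)=c_0$.

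Here $G(\pm\infty)=0$ and $G$ decays at rate $\lambda<\min\{\gamma_+,\gamma_-\}$. The decay estimates in Lemma~\ref{lem:linearized} then give $\varphi(\pm\infty)=G(\pm\infty)/f'(\pm1)=0$ and $e^{\lambda|z|}|\varphi''|\lesssim\|g\|_{\mathcal Y_\lambda}$, so $\varphi\in\xX_\lambda^0$ with bounded inverse. I expect this to be the main obstacle. One has to check that the constants in Lemma~\ref{lem:linearized}, including those from the explicit formula \eqref{eq:expression_varphi}, are linear in $\|g\|_{\mathcal Y_\lambda}$ and uniform over $R^{-1}<|e|<R$, which is a compact set of directions. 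This requires the constants $\kappa_\pm,\gamma_\pm$ to depend continuously on $e$.

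The implicit function theorem now yields, for each $e$, a neighbourhood $|q|+|\delta|<\delta_0$ and a unique smooth solution $(\varphi,\mathfrak{c})$. The size of the linearisation's inverse and the $\cC^2$ bounds of $\Phi$ are uniform in $e$ on the annulus. A quantitative implicit function theorem, or compactness together with uniqueness, therefore gives a single $\delta_0$, and joint smoothness in $(e,q,\delta)$.

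For \eqref{e:CHL_decay}, differentiate \eqref{TW} in $(e,q,\delta)$. Each derivative $\partial^\alpha_e\partial^\beta_q\partial^{k'}_\delta\uU$ satisfies the linearised equation around $(\uU,\mathfrak{c})$ with a right-hand side built from lower-order derivatives. By induction this right-hand side converges exponentially at $\pm\infty$ to the corresponding derivative of $f(U_\pm(\delta))+\delta$ data.

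We then apply the decay part of Lemma~\ref{lem:linearized}, perturbed to the linearised operator around $\uU$ with small drift $\mathfrak{c}$. Because $|q|+|\delta|$ is small, this operator's decay rates stay close to $\gamma_\pm$, so the same rate $\lambda$ survives. This bounds the $k=0$ and $k=2$ cases of \eqref{e:CHL_decay}. The $k=1$ case follows by integrating $\uU''$ from $\pm\infty$.
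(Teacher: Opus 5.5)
Your proposal is correct and shares the paper's core. Both use the implicit function theorem around the unperturbed profile, with the linearisation $(\varphi,h)\mapsto(-\aA_e\varphi+h\,\overline{\uU}_z,\varphi(0))$ inverted by taking $h=\bracket{g,Y}/\bracket{\overline{\uU}_z,Y}$ and applying Lemma~\ref{lem:linearized}, followed by compactness of the annulus. The routes differ in two places.

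First, the paper does not subtract a reference profile. The unknown $u$ itself lives in $\xX_\lambda$, whose norm contains $u(\pm\infty)$, and the target space consists of functions converging exponentially to possibly nonzero limits. So the boundary condition $\uU(\pm\infty)=U_\pm(\delta)$ is forced by the equation: the limit of the left-hand side of \eqref{TW} at $\pm\infty$ is $f(u(\pm\infty))+\delta$, whose root near $\pm1$ is $U_\pm(\delta)$. The map then depends on $e$ only through $a_e(u)=e^\sft D(u)e$ and $\psi_{ij}$.

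In your decomposition $\uU=\overline{\uU}(\cdot,e)+r_\delta+\varphi$, the map depends on $e$ through $\overline{\uU}(\cdot,e)$. You must therefore first show from \eqref{eq:z_inverse_expression} that $e\mapsto\overline{\uU}(\cdot,e)$ is $\cC^\infty$ into the weighted space. Near $\pm\infty$ the $e$-derivatives of $\overline{\uU}\mp1$, $\overline{\uU}_z$, $\overline{\uU}_{zz}$ are of size $(1+|z|)^{|\alpha|}e^{-\gamma_\pm(e)|z|}$. This is doable, but the paper instead obtains this regularity as an output of the lemma with $\psi_{ij}=0$; that is what gives $\psi_{ij}\in\bm\Psi$ in \eqref{eq:choice_psi}.

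Relatedly, the obstacle you flag is not one. A bounded bijection between Banach spaces has a bounded inverse by the open mapping theorem. Uniformity in $e$ comes from finitely many local branches glued by local uniqueness. So you need no quantitative dependence of the constants in Lemma~\ref{lem:linearized} on $\|g\|$ or on $e$.

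Second, for \eqref{e:CHL_decay} you do not need to differentiate \eqref{TW} and develop decay theory for the operator linearised at $(\uU,\mathfrak c)$ with drift. The implicit function theorem already makes $(e,q,\delta)\mapsto\uU$ a $\cC^\infty$ map into $\xX_\lambda$ (in your setting, $\varphi$ into $\xX^0_\lambda$). Hence each parameter derivative has an $e^{\lambda|z|}$-weighted second $z$-derivative. Its limits at $\pm\infty$ are the corresponding derivatives of $U_\pm(\delta)$, because evaluation at $\pm\infty$ is a bounded linear functional. Integrating from $\pm\infty$ then gives $k=0,1$. Your route there would also work, but it rests on perturbed ODE asymptotics that you would still have to prove.
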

\begin{proof}
Fix \(R>1\). Since \(\gamma_\pm(e)\) are positive and continuous on the compact annulus \(\{e:R^{-1}\leq |e|\leq R\}\), we can take
\begin{equation*}
    \lambda \in \Big(0,\gamma_R\wedge \inf_{R^{-1} < |e| < R} \gamma_\pm(e) \Big)\;.
\end{equation*}
In what follows, we fix the $R>1$ and the choice of $\lambda$ above. Write $\xX = \xX_\lambda$ for simplicity. Let $\yY$ be the Banach space
\begin{equation*}
    \mathcal Y:= \Bigl\{ g\in\cC(\RR): \lim_{z\to\pm\infty} g(z) =: g(\pm \infty) \; \text{exist, and} \; \big| g\big(\pm |z| \big) - g(\pm \infty) \big| \lesssim e^{-\lambda |z|} \Bigr\}
\end{equation*}
with norm
\begin{equation*}
    \|g\|_{\mathcal Y}:=|g(+\infty)| + |g(-\infty)| + \sup_{z\ge 0} e^{\lambda z} \bigl(|g(z)-g(+\infty)| + |g(-z)-g(-\infty)|\bigr)\;.
\end{equation*}
For $(u,\mathfrak{c}) \in \xX \times \RR$, define the maps $\fF: (\RR^d \times \RR^{d \times d} \times \RR) \times (\xX \times \RR) \rightarrow \yY$ and $\widetilde{\fF}: (\RR^d \times \RR^{d \times d} \times \RR) \times (\xX \times \RR) \rightarrow \yY \times \RR$ by
\begin{equation*}
        \fF(e,q,\delta, u, \mathfrak{c}) := \bigl(a_e(u)u'\bigr)' + \mathfrak{c}u' + f(u) +\sum_{i,j}q_{ij}\psi_{ij} + \delta\;,
\end{equation*}
and
\begin{equation*}
    \widetilde{\mathcal F}(e,q,\delta, u, \mathfrak{c}) := \bigl(\fF(e,q,\delta,u,\mathfrak{c}),u(0) \bigr)\;.
\end{equation*}
For every $e \in \RR^{d} \setminus \{0\}$, both $\fF$ and $\widetilde{\fF}$ are infinitely Fr\'echet differentiable in all variables in a neighbourhood of $(e, 0, 0, \overline{\uU}(\cdot,e),0)$. Fix $e_*$ with $R^{-1} \leq |e_*| \leq  R$. By definition, we have
\begin{equation*}
    \widetilde{\fF} \big(e_*,0,0, \overline{\uU}(\cdot,e_*),0 \big)= (0,0)\;.
\end{equation*}
The Fr\'echet derivative of $\widetilde{\fF}$ with respect to the variable \((u,\mathfrak{c})\) at the point $\big( e_*,0,0, \overline{\uU}(\cdot,e_*),0 \big)$, denoted by $\lL_{e_*} \in \emL(\mathcal X \times\RR,\mathcal Y\times\RR)$, is given by
\begin{equation*}
    \mathcal L_{e_*}(\varphi,h) =
    \bigl(-\aA_{e_*}\varphi + h \, \overline{\uU}_z(\cdot,e_*), \, \varphi(0)\bigr)\;.
\end{equation*}
We next verify that \(\mathcal L_{e_*}\) is an isomorphism. Suppose \((\varphi,h)\in\ker\mathcal L_{e_*}\). Then
\[
    -\aA_{e_*}\varphi + h \, \overline{\uU}_z(\cdot,e_*)=0\;,\qquad \varphi(0)=0\;.
\]
Testing against $Y(\cdot, e_*)$ on both sides above, using that $Y(\cdot, e_*) \in \ker(\aA_{e_*}^*)$ and the positivity of $a_e$, we must have $h=0$. Hence, by Lemma~\ref{lem:linearized}, we also have $\varphi = 0$. 

Conversely, fix \((g,\ell)\in\mathcal Y\times\RR\). Let
\begin{equation*}
    h:= \frac{\bracket{g, \,Y(\cdot,e_*)}}
    {\bracket{\overline{\uU}_z(\cdot,e_*), \, Y(\cdot,e_*)}}\;,\qquad
    \widetilde g:=g- h \, \overline{\uU}_z(\cdot,e_*)\;.
\end{equation*}
For the above $h$, the function $\widetilde{g}$ satisfies the assumption of Lemma~\ref{lem:linearized}. Then, let $\widetilde{\varphi}$ be the unique $\lL^\infty (\RR)$ solution to
\begin{equation*}
    -\aA_{e_*} \widetilde{\varphi} = \widetilde g\;,\qquad \widetilde{\varphi}(0)=0\;,
\end{equation*}
and one has $\widetilde{\varphi} \in \xX$. Define
\begin{equation*}
    \varphi:=\widetilde\varphi+
    \frac{\ell}{\overline{\uU}_z(0,e_*)}\,\overline{\uU}_z(\cdot,e_*)\;.
\end{equation*}
One can check directly that \(\mathcal L_{e_*}(\varphi,h)=(g,\ell)\). Hence \(\mathcal L_{e_*}\) is surjective, and therefore a topological isomorphism.

Hence, by the implicit function theorem, there exists a neighbourhood of $(e_*, 0, 0)$ such that for every $(e,q,\delta)$ in that neighbourhood, there exists a unique pair
\begin{equation} \label{eq:pair_uc_implicit}
    \big( \uU^{e_*} (\cdot, e,q,\delta), \; \mathfrak{c}^{e_*} (e,q,\delta) \big) \in \xX \times \RR
\end{equation}
such that $\widetilde{\fF}(e,q,\delta,\uU^{e_*}, \mathfrak{c}^{e_*}) = (0,0)$. Furthermore, by infinite differentiability of the map $\widetilde{\fF}$ in all its variables, the pair $(\uU^{e_*}, \mathfrak{c}^{e_*})$ is also infinitely differentiable in $(e,q,\delta)$ in that neighbourhood. 

We next explain the uniformity in \(e\). The above construction gives a local branch near each base point \(e_*\). Since the closed annulus \(\{e:R^{-1}\leq |e|\leq R\}\) is compact, there exist finitely many $e_1, \dots, e_n$ in the annulus, neighbourhoods $\bB_{\delta_i}(e_i)$ of $e_i$ that covers the annulus, and $\delta_0>0$ sufficiently small such that the pair $(\uU^i, \mathfrak{c}^i)$ is the above implicit function from $(\delta_i, \delta_0)$ neighbourhood of $(e_i, 0, 0)$ to $\xX \times \RR$. 

It remains to patch the local branches. It suffices to show that the implicit functions for different $i$ and $j$ agree on overlaps of the above neighbourhoods of $(e_i, 0, 0)$ and $(e_j, 0, 0)$. By further decreasing $\delta_0$, this follows immediately from local uniqueness of the implicit function. Hence, we conclude there is a $\delta_0 > 0$ such that there is a unique smooth mapping $(\uU, \mathfrak{c})$ from the open set
\begin{equation*}
    \left\{ (e,q,\delta): \; R^{-1} < |e| < R\;, |q| + |\delta| < \delta_0  \right\}
\end{equation*}
to $\xX \times \RR$. 

Finally, the bound \eqref{e:CHL_decay} follows from the equation \eqref{TW} and that $(e,q,\delta) \mapsto \uU(\cdot,e,q,\delta)$ is a \(\cC^\infty\) map with values in \(\mathcal X\). This completes the proof of the lemma. 
\end{proof}

\subsection{Derivation of the equation for $\rho_\eps^{\pm}$ from a rough ansatz}
\label{sec:formal_derivation}

In this subsection, we give a formal derivation of the equations for $\rho_\eps^{\pm}$, which are building blocks for the super- and sub-solutions to \eqref{SAC}. By Lemma~\ref{lem:CHL} (applied in the situation with $\psi_{ij}=0$) and with an abuse of notation, we deduce the functions
\begin{equation} \label{eq:choice_psi}
    \psi_{ij} := D_{ij}(\overline{\uU})\,\overline{\uU}_z + \bigl(\d_{e_i} a_e(\overline{\uU}) \, \overline{\uU}_{e_j} \bigr)_z
\end{equation}
belong to the space $\bm\Psi$ specified in Definition~\ref{defn:function_space_psi}, and hence satisfy the assumption of Lemma~\ref{lem:CHL}. In what follows, we let $(\uU, \mathfrak{c})$ be the pair specified in Lemma~\ref{lem:CHL} built from the $\psi_{ij}$ in \eqref{eq:choice_psi}. The reason for this choice will be clear later in \eqref{e:Nv_expression}. 

Recall $u_\eps$ is the solution to the equation \eqref{SAC}, and that $\uU = \uU(z,e,q,\delta)$ is as in Lemma~\ref{lem:CHL} with $\psi_{ij}$ as in \eqref{eq:choice_psi}. The starting point is that for proper quantity $``\rho_\eps"$, the solution $u_\eps$ to \eqref{SAC} is approximately
\begin{equation*}
    u_\eps \approx \uU \Big( \frac{\rho_\eps}{\eps}, \, \nabla \rho_\eps, \, \eps \nabla^2 \rho_\eps, \, \eps \xi_\eps \Big)\;.
\end{equation*}
To rigorously justify such an ansatz, we fix $\beta \in (1,2)$, and let 
\begin{equation} \label{eq:ansatz_u_pm_prelim}
    \widetilde{u}_\eps^{\pm} := \uU \Big( \frac{\rho_\eps^\pm}{\eps}, \, \nabla \rho_\eps^\pm, \, \eps \nabla^2 \rho_\eps^\pm, \, \eps \xi_\eps \pm \eps^\beta \Big)\;.
\end{equation}
In derivation of the equation of $\rho_\eps^{\pm}$, we first postulate the behaviour of certain quantities related to derivatives of $\rho_\eps^\pm$. These properties and bounds will be proven later in Theorem~\ref{thm:well_posedness_smcf}.

\begin{postulation} \label{pos:postulation}
We postulate that there exists an open neighbourhood of $\d \vV_0$ such that in that neighbourhood, we have 
\begin{equation} \label{eq:postulation}
    |\nabla \rho_\eps^\pm| \equiv 1\;, \quad \sup_{0\leq k\leq 4} \|\nabla^k \rho_\eps^\pm\|_{\lL_{t,x}^\infty} \lesssim 1\;, \quad \sup_{0\leq k\leq 2} \|\d_t \nabla^k \rho_\eps^\pm\|_{\lL_{t,x}^\infty} \lesssim \eps^{-\kappa}
\end{equation}
with all proportionality constants uniform in $\eps$. 
\end{postulation}

In accordance with the equation \eqref{SAC} for $u_\eps$, we define the operator $\nN_\eps$ by 
\begin{equation} \label{eq:operator_nonlinear}
    \nN_\eps v:= \d_t v - \div \big( D(v) \nabla v \big) - \eps^{-2} f(v) - \eps^{-1} \xi_\eps\;.
\end{equation}
The solution $u_\eps$ to \eqref{SAC} satisfies $\nN_\eps u_\eps = 0$. We now start to derive the equations for $\rho_\eps^\pm$ so that \eqref{eq:ansatz_u_pm_prelim} are approximate solutions with $\nN_\eps \widetilde{u}_\eps^\pm \approx 0$. 

Differentiating in $t$ on both sides of \eqref{eq:ansatz_u_pm_prelim} and using \eqref{eq:postulation} and that $\eps |\d_t \xi_\eps| \lesssim \eps^{1-2\kappa}$ (implied by Part 4 in Assumption~\ref{ass:assumption_overall}), one gets
\begin{equation*}
    \d_t \widetilde{u}_\eps^\pm = \frac{1}{\eps} \cdot \uU_z \, \d_t\rho_\eps^\pm + \uU_e \cdot \d_t (\nabla \rho_\eps^{\pm}) + \eps \Big( \uU_q \cdot \d_t \nabla^2 \rho_\eps^{\pm} + \uU_\delta\,\d_t \xi_\eps \Big) = \frac{1}{\eps} \cdot \uU_z \, \d_t\rho_\eps^\pm + \oO(\eps^{-\kappa})\;.
\end{equation*}
Differentiating in the $x_j$ variable, one gets
\begin{equation*}
    \d_{x_j} \widetilde{u}_\eps^\pm = \frac{1}{\eps} \cdot \uU_z\, \d_{x_j} \rho_\eps^\pm + \sum_{k = 1}^{d} \uU_{e_k} \,\d_{x_j, x_k}^2 \rho_\eps^\pm + \eps \Big( \uU_q \cdot \nabla^2 (\d_{x_j} \rho_\eps^{\pm}) + \uU_\delta \, \d_{x_j} \xi_\eps \Big)\;.
\end{equation*}
Multiplying $D_{ij}(\widetilde{u}_\eps^{\pm})$ on both sides and further differentiating in $x_i$, and noting that $\big( D_{ij}(\uU) \, \uU_z \big)_{e_k} = \big( D_{ij}(\uU) \, \uU_{e_k} \big)_{z}$, as well as using that $|\nabla^k \rho_\eps^\pm| \lesssim 1$ and $|\nabla^k \xi_\eps| \lesssim \eps^{-\kappa}$, one gets
\begin{equation*}
    \begin{split}
    \d_{x_i} \Bigl( D_{ij} (\widetilde{u}_\eps^\pm)\, &\d_{x_j} \widetilde{u}_\eps^\pm\Bigr) =
    \frac1{\eps^2} \Bigl(D_{ij}(\uU)\, \uU_z \Bigr)_z\, \d_{x_i} \rho_\eps^\pm \,\d_{x_j} \rho_\eps^\pm + \frac{1}{\eps} \biggl[ D_{ij}(\uU) \, \uU_z \,\d_{x_i, x_j}^2 \rho_\eps^\pm\\
    &+\sum_{k = 1}^d \Bigl(D_{ij}(\uU)\,\uU_{e_k}\Bigr)_z \Bigl(\d_{x_i} \rho_\eps^\pm \,\d_{x_j, x_k}^2 \rho_\eps^\pm +\d_{x_j} \rho_\eps^\pm \,\d_{x_i, x_k}^2 \rho_\eps^\pm \Bigr) \biggr] + \oO(\eps^{-\kappa})\;.
    \end{split}
\end{equation*}
Write $e:=\nabla\rho_\eps^\pm$. Then we have
\begin{equation*}
    \sum_{i,j}\Bigl(D_{ij}(\uU)\,\uU_z\Bigr)_z\, \d_{x_i} \rho_\eps^\pm \,\d_{x_j} \rho_\eps^\pm
    =
    \bigl(a_e(\uU)\,\uU_z\bigr)_z\;,
\end{equation*}
and
\begin{equation*}
    \sum_{i,j}\sum_{k = 1}^d
    \Bigl(D_{ij}(\uU)\,\uU_{e_k}\Bigr)_z
    \Bigl(\d_{x_i} \rho_\eps^\pm \,\d_{x_j, x_k}^2 \rho_\eps^\pm
          +\d_{x_j} \rho_\eps^\pm \,\d_{x_i, x_k}^2 \rho_\eps^\pm\Bigr) =
    \sum_{i,j} \Bigl(\d_{e_i} a_e(\uU)\,\uU_{e_j}\Bigr)_z\, \d_{x_i, x_j}^2 \rho_\eps^\pm\;.
\end{equation*}
Putting together the above, we obtain 
\begin{equation*}
    \begin{split}
    \nN_\eps (\widetilde{u}_\eps^\pm) = - &\frac{1}{\eps^2} \Bigl (\bigl(a_e(\uU)\,\uU_z \bigr)_z + f(\uU) \Bigr) +\frac{1}{\eps} \Bigl[ \uU_z\,\d_t\rho_\eps^\pm\\
    &- \sum_{i,j}\Bigl(D_{ij}(\uU) \,\uU_z + \bigl( (\d_{e_i} a_e)(\uU)\,\uU_{e_j} \bigr)_z \Bigr) \d_{x_i, x_j}^2 \rho_\eps^\pm - \xi_\eps \Bigr] + \oO(\eps^{-\kappa})\;.
    \end{split}
\end{equation*}
Now, using the equation for $\uU$ in \eqref{TW} with $e = \nabla \rho_\eps^\pm$, $q_{ij}:=\eps\,\d^2_{x_i, x_j} \rho_\eps^\pm$ and $\delta = \eps \xi_\eps \pm \eps^\beta$, we get
\begin{equation}\label{e:Nv_expression}
    \begin{split}
    \nN_\eps (\widetilde{u}_\eps^\pm &) = \frac{1}{\eps} \, \uU_z \Big( \d_t \rho_\eps^{\pm} + \frac{1}{\eps} \, \mathfrak{c} \big( \nabla \rho_\eps^{\pm}, \, \eps \nabla^2 \rho_\eps^\pm, \, \eps \xi_\eps \pm \eps^\beta \big) \Big) \pm \eps^{\beta-2}\\
    &+\frac{1}{\eps} \sum_{i,j} \Big[ \psi_{ij} - D_{ij}(\uU) \,\uU_z - \big( (\d_{e_i} a_e)(\uU) \,\uU_{e_j} \big)_z \Big] \d_{x_i,x_j}^2\rho_\eps^\pm + \oO(\eps^{-\kappa})\;.
    \end{split}
\end{equation}
By the choice of $\psi_{ij}$ in \eqref{eq:choice_psi} and noting that $\overline{\uU}(\cdot, e) = \uU(\cdot, e, 0, 0)$, together with that $\uU$ is twice continuously differentiable in $z$ and smooth in other variables, 
we see the term in the bracket on the last line above (without the $\frac{1}{\eps}$ factor) is of order $\oO(\eps^{1-\kappa})$. Hence, we obtain
\begin{equation*}
    \nN_\eps (\widetilde{u}_\eps^\pm) = \frac{1}{\eps} \, \uU_z \Big( \d_t \rho_\eps^{\pm} + \frac{1}{\eps} \, \mathfrak{c} \big( \nabla \rho_\eps^{\pm}, \, \eps \nabla^2 \rho_\eps^\pm, \, \eps \xi_\eps \pm \eps^\beta \big) \Big) \pm \eps^{\beta-2} + \oO(\eps^{-\kappa})\;.
\end{equation*}
At this stage, it looks natural to define $\rho_\eps^\pm$ such that
\begin{equation} \label{eq:rho_ideal}
    \d_t \rho_\eps^{\pm} + \frac{1}{\eps} \, \mathfrak{c} \big( \nabla \rho_\eps^{\pm}, \, \eps \nabla^2 \rho_\eps^\pm, \, \eps \xi_\eps \pm \eps^\beta \big) = 0\;.
\end{equation}
However, one problem with this definition is that the equation is not parabolic. Instead, in accordance with \eqref{eq:mcf_deterministic}, we define $\rho_\eps^\pm$ such that
\begin{equation} \label{eq:rho_real}
    \d_t \rho_\eps^\pm = -\frac{1}{\eps}\mathfrak{c}\Bigl(\nabla\rho_\eps^\pm\;, \eps \nabla^2 \rho_\eps^\pm (\id-\rho_\eps^\pm\nabla^2\rho_\eps^\pm)^{-1}\;,\eps\xi_\eps(t,x-\rho_\eps^\pm \nabla \rho_\eps^\pm) \pm \eps^\beta \Bigr)\;.
\end{equation}
Although \eqref{eq:rho_real} is still not parabolic, it can be made uniformly parabolic by adding an auxiliary term (see Theorem~\ref{thm:well_posedness_smcf} below). It is crucial here that the spatial dependence of the noise is now $x - \rho_\eps^\pm \nabla \rho_\eps^\pm$ for adding such an effective auxiliary term.

We will prove in Theorem~\ref{thm:well_posedness_smcf} below that the solution $\rho_\eps^\pm$ to \eqref{eq:rho_real} does satisfy the bounds \eqref{eq:postulation} in Postulation~\ref{pos:postulation}, and hence all the above derivations (with specified error terms) are valid. In particular, these bounds ensure that \eqref{eq:rho_ideal} is \textit{almost true} (see Proposition~\ref{prop:rho_approximate_effect} below). This in turn will enable us to establish an essential quantitative bound for $\nN_\eps (\widetilde{u}_\eps^\pm)$ to apply the comparison principle (see Lemma~\ref{lem:sub_sup}). 

Finally, since $\mathfrak{c}(e,0,0)=0$, it is natural to expect $\rho_\eps^\pm\to\rho$ as $\eps\to0$, where $\rho$ solves the limiting equation obtained by linearising $\mathfrak{c}$ in $q$ and $\delta$:
\begin{equation*}
    \d_t \rho
    =
    -\partial_q \mathfrak{c}(\nabla\rho,0,0)
    :
    \bigl(\nabla^2\rho\,(\id-\rho\nabla^2\rho)^{-1}\bigr)-\partial_\delta \mathfrak{c} (\nabla \rho,0,0) \xi(t,x-\rho \nabla\rho)\;.
\end{equation*}
The following lemma states that the coefficients $\mu_{ij}$ and $b$ in the stochastic mean curvature flow in \eqref{eq:smcf_surface} can be expressed in terms of derivatives of the function $\mathfrak{c}$.

\begin{lem} \label{lem:expressions_coefficients}
We have
    \begin{equation*}
        \partial_{q_{ij}}\mathfrak{c}(e,0,0) = -\mu_{ij}(e)\;, \quad \partial_\delta \mathfrak{c}(e,0,0) = -b(e)\;,
    \end{equation*}
    where $\mu_{ij}$ and $b$ are defined in \eqref{e:mu_c_lambda}.
\end{lem}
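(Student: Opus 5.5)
The plan is to differentiate the profile problem \eqref{TW} at $(q,\delta)=(0,0)$ and apply the Fredholm solvability condition of Lemma~\ref{lem:linearized}, by testing against $Y(\cdot,e)\in\ker\aA_e^*$. By Lemma~\ref{lem:CHL}, $(\uU,\mathfrak c)$ is smooth in $(e,q,\delta)$ with values in $\xX_\lambda\times\RR$. Moreover $\uU(\cdot,e,0,0)=\overline{\uU}(\cdot,e)$ and $\mathfrak c(e,0,0)=0$.

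\emph{Differentiating in $q_{ij}$.} At $(q,\delta)=(0,0)$, the function $\varphi:=\d_{q_{ij}}\uU$ satisfies
\begin{equation*}
-\aA_e\varphi+\d_{q_{ij}}\mathfrak c(e,0,0)\,\overline{\uU}_z+\psi_{ij}=0\;.
\end{equation*}
Pairing with $Y$ and using $\aA_e^*Y=0$, with the boundary terms vanishing by the exponential decay in \eqref{e:CHL_decay} and Lemma~\ref{lem:profile_refined_asymptotics}, we get $\d_{q_{ij}}\mathfrak c\,\bracket{\overline{\uU}_z,Y}=-\bracket{\psi_{ij},Y}$.

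\emph{Differentiating in $\delta$.} Similarly $\d_\delta\uU$ satisfies $-\aA_e\d_\delta\uU+\d_\delta\mathfrak c\,\overline{\uU}_z+1=0$. Here $\d_\delta\uU$ tends to the constants $U_\pm'(0)$ at $\pm\infty$ and $\aA_e$ annihilates nothing extra, so the pairing is still legitimate because $Y$ decays exponentially. This gives $\d_\delta\mathfrak c\,\bracket{\overline{\uU}_z,Y}=-\int Y\,\md z$.

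\emph{The three scalar integrals.} Each is computed by the change of variables $s=\overline{\uU}(z)$, using the first-order identity \eqref{e:first_order_identity}, $Y=a_e(\overline{\uU})\overline{\uU}_z=\sqrt{W_e(\overline{\uU})}$. First, $\bracket{\overline{\uU}_z,Y}=\int_{-1}^1\sqrt{W_e(s)}\,\md s=\lambda(e)$. Second, $\int Y\,\md z=\int_{-1}^1 a_e(s)\,\md s$, which yields $\d_\delta\mathfrak c=-b(e)$.

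For $\bracket{\psi_{ij},Y}$, the term $D_{ij}(\overline{\uU})\overline{\uU}_zY$ gives $\int_{-1}^1 D_{ij}(s)\sqrt{W_e(s)}\,\md s$. For the other term we integrate by parts in $z$ and use $Y_z=(a_e(\overline{\uU})\overline{\uU}_z)_z=-f(\overline{\uU})$:
\begin{equation*}
\int\big(\d_{e_i}a_e(\overline{\uU})\,\overline{\uU}_{e_j}\big)_zY\,\md z=\int \d_{e_i}a_e(\overline{\uU})\,\overline{\uU}_{e_j}\,f(\overline{\uU})\,\md z\;.
\end{equation*}
To express $\overline{\uU}_{e_j}$, set $Z(s,e):=\int_0^s a_e/\sqrt{W_e}$, which inverts $\overline{\uU}$ by \eqref{eq:z_inverse_expression}. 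Differentiating $Z(\overline{\uU}(z,e),e)=z$ in $e_j$ gives $\overline{\uU}_{e_j}=-\d_{e_j}Z\cdot\sqrt{W_e}/a_e$. With $\md z=(a_e/\sqrt{W_e})\,\md s$, the integral becomes $-\int_{-1}^1\d_{e_i}a_e(s)f(s)\,\d_{e_j}Z(s,e)\,\md s$.

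Now $\d_{e_i}a_e f=-\tfrac12\d_s\d_{e_i}W_e$, so a further integration by parts in $s$ gives
\begin{equation*}
-\tfrac12\int_{-1}^1\d_{e_i}W_e(s)\,\d_{e_j}\Big(\frac{a_e(s)}{\sqrt{W_e(s)}}\Big)\,\md s\;,
\end{equation*}
provided the boundary terms vanish. Combining the two terms, $\bracket{\psi_{ij},Y}=\lambda(e)\mu_{ij}(e)$, hence $\d_{q_{ij}}\mathfrak c=-\mu_{ij}$.

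\emph{Main obstacle: the endpoints $s=\pm1$.} Both the boundary terms and the integrability of the final integrand need control there. The equipotential condition \eqref{eq:cond_Df_equi} gives $W_e(\pm1)=0$ for every $e$, so $\d_{e_i}W_e(\pm1)=0$. Moreover $W_e'(\pm1)=0$ for all $e$, so differentiating the Taylor expansion \eqref{e:taylor_We} in $e$ shows $\d_{e_i}W_e(s)=\oO((1\mp s)^2)$. On the other hand, $a_e/\sqrt{W_e}=\frac{1}{\gamma_\pm(1\mp s)}+\oO(1)$ with $\gamma_\pm$ smooth in $e$. This gives $\d_{e_j}(a_e/\sqrt{W_e})=\oO((1\mp s)^{-1})$ and $\d_{e_j}Z=\oO(|\log(1\mp s)|)$. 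The products are therefore integrable and the boundary terms vanish.

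These expansions are uniform in $e$ on compact annuli. Together with the exponential bounds from Lemma~\ref{lem:CHL}, this uniformity also justifies differentiating under the integral sign and swapping $\d_e$ with the change of variables.
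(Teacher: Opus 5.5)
Your proposal is correct and follows the paper's proof. Both differentiate \eqref{TW} at $(q,\delta)=(0,0)$ and pair with $Y\in\ker\aA_e^*$ to obtain the quotient formulas for $\partial_{q_{ij}}\mathfrak{c}$ and $\partial_\delta\mathfrak{c}$. The only difference is that the paper cites \cite[equations~(2.10)--(2.13)]{FP24} for the three integral identities, whereas you derive them directly via the substitution $s=\overline{\uU}(z)$ and two integrations by parts, with valid endpoint control from the equipotential condition.
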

\begin{proof}
Differentiating \eqref{TW} in $q$ and $\delta$ respectively and taking $q=0$, $\delta =0 $ gives (noting that $\mathfrak{c}(e,0,0) = 0$)
    \begin{equation*}
        \big(a_e(\uU)\,\uU_z\big)_{z, q_{ij}} + (\d_{q_{ij}}\mathfrak{c})(e,0,0)\,\overline{\uU}_z + f'(\overline{\uU}) \,\uU_{q_{ij}} + \psi_{ij} = 0
    \end{equation*}
    for all $1\leq i,j\leq d$ and 
    \begin{equation*}
        \big(a_e(\uU)\,\uU_z\big)_{z, \delta} + (\d_\delta \mathfrak{c})(e,0,0)\,\overline{\uU}_z + f'(\overline{\uU})\,\uU_\delta +  1 = 0\;.
    \end{equation*}
    Noting that $\big(a_e(\uU)\,\uU_z\big)_{z, q_{ij}} = \d_z^2 \big(a_e(\uU)\,\uU_{q_{ij}}\big)$ (and the same with $q_{ij}$ replaced by $\delta$), we see the above two relations are equivalent to
    \begin{equation*}
        \aA_e \,\uU_{q_{ij}} = (\d_{q_{ij}}\mathfrak{c})(e,0,0) \, \overline{\uU}_z +  \psi_{ij}\;, \qquad \aA_e \,\uU_{\delta} = (\d_{\delta}\mathfrak{c})(e,0,0)\,\overline{\uU}_z +  1\;.
    \end{equation*}
    Pairing with $Y$ and using $\aA_e^* Y = 0$, we get
    \begin{equation*}
        (\d_{q_{ij}}\mathfrak{c})(e,0,0) = -\frac{\bracket{\psi_{ij}, Y}}{\bracket{\overline{\uU}_z, \, Y}}\;, \qquad (\d_\delta \mathfrak{c})(e,0,0) = -\frac{\int_{-1}^{1} a_e(s)\, \md s}{\bracket{\overline{\uU}_z, \, Y}}\;.
    \end{equation*}
    It remains to show the identities
    \begin{equation*}
        \begin{split}
        &\bracket{\overline{\uU}_z,Y} = \int_{-1}^1 \sqrt{W_e(s)}\,\md s\;, \quad \bracket{D_{ij}(\overline{\uU}) \, \overline{\uU}_z, \, Y} = \int_{-1}^{1} D_{ij}(s) \sqrt{W_e(s)} \, \md s\;,\\
    &\left\langle \Bigl((\d_{e_i} a_e)(\overline{\uU}) \, \overline{\uU}_{e_j} \Bigr)_z, \; Y \right\rangle = -\frac{1}{2} \int_{-1}^{1} 
    \left[ \d_{e_i} (W_e(s)) \cdot \d_{e_j}
    \left(
        \dfrac{ a_e(s)}{\sqrt{W_e(s)}}
    \right)
    \right]\,\md s\;.
    \end{split}
    \end{equation*}
    This is the content of \cite[equations~(2.10)-(2.13)]{FP24}. We have thus completed the proof. 
\end{proof}

\begin{lem} \label{lem:ellipticity}
    There exists $C>0$ such that
    \begin{equation*}
        \sum_{i,j=1}^d \mu_{ij}(e) \, \eta_i \, \eta_j \geq C
    \end{equation*}
    for all unit vectors $e, \eta \in \SS^{d-1}$ satisfying $\bracket{e, \eta} = 0$. As a consequence, there exists $M >0$ such that 
    \begin{equation*}
        \widetilde{\mu}(e): = \mu(e) + M e \otimes e 
    \end{equation*}
    is positive definite (uniform in $e\in\SS^{d-1}$).
\end{lem}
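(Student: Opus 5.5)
The plan is to reduce the claim to a pointwise inequality inside the integral \eqref{e:mu_c_lambda}. The key observation is that, for $\eta \perp e$, the integrand of $\mu_{ij}(e)\eta_i\eta_j$ splits into a Cauchy--Schwarz gap for the positive definite matrix $D(s)$ plus a perfect square.

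\textbf{Step 1: rewrite the integrand.} Fix unit vectors $e,\eta$ with $\bracket{e,\eta}=0$, and abbreviate $a=a_e(s)$, $W=W_e(s)$, $D_{\eta\eta}=\eta^\sft D(s)\eta$, $D_{\eta e}=\eta^\sft D(s)e$. Directional derivatives in $e$ along $\eta$ give $\partial_\eta a = 2D_{\eta e}$ and $\partial_\eta W = 2g$, where $g(s) := -2\int_{-1}^s D_{\eta e}(\tau) f(\tau)\,\md\tau$. Then
\begin{equation*}
\lambda(e)\sum_{i,j}\mu_{ij}(e)\eta_i\eta_j = \int_{-1}^1\Big[D_{\eta\eta}\sqrt W - \frac{2gD_{\eta e}}{\sqrt W} + \frac{a g^2}{W^{3/2}}\Big]\md s\;.
\end{equation*}
We must check this integral converges near $s=\pm1$. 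By \eqref{eq:cond_Df_equi} applied to the bilinear form, $g(\pm1)=0$; moreover $g'(\pm1)\propto f(\pm1)=0$. Hence $g=\oO((1\mp s)^2)$, while $W\sim c(1\mp s)^2$ by \eqref{e:taylor_We}, so every term is bounded.

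\textbf{Step 2: split off a square.} Cauchy--Schwarz for the positive definite form $D(s)$ gives $D_{\eta e}^2\le D_{\eta\eta}\,a$. Writing $D_{\eta\eta}=\frac{D_{\eta e}^2}{a}+\big(D_{\eta\eta}-\frac{D_{\eta e}^2}{a}\big)$, the integrand becomes
\begin{equation*}
\Big(D_{\eta\eta}-\frac{D_{\eta e}^2}{a}\Big)\sqrt W + \Big(\frac{D_{\eta e}W^{1/4}}{\sqrt a}-\frac{\sqrt a\, g}{W^{3/4}}\Big)^2\;.
\end{equation*}
This identity is where the computation must be checked carefully; it is the main (though elementary) obstacle. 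Dropping the square, we get
\begin{equation*}
\mu_{ij}(e)\eta_i\eta_j\ \ge\ \frac{1}{\lambda(e)}\int_{-1}^1\Big(D_{\eta\eta}(s)-\frac{D_{\eta e}(s)^2}{a_e(s)}\Big)\sqrt{W_e(s)}\,\md s\;.
\end{equation*}
Equality in Cauchy--Schwarz holds only when $\eta\parallel e$, which is excluded by orthogonality. Therefore the bracket is strictly positive for each $s$. Since $W_e>0$ on $(-1,1)$ except at isolated points (it vanishes only where $\overline\uU$ would stall, which does not happen), the right-hand side is strictly positive. It is also continuous in $(e,\eta)$ on the compact set $\{(e,\eta)\in\SS^{d-1}\times\SS^{d-1}:\bracket{e,\eta}=0\}$, using smoothness of $D$ and the bounds in Assumption~\ref{ass:assumption_overall}. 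Taking its minimum gives the constant $C>0$.

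\textbf{Step 3: the consequence.} $\mu$ is continuous, hence bounded, on $\SS^{d-1}$, say $|\mu(e)|\le K$. For a unit vector $v=\alpha e+\beta\eta$ with $\eta\perp e$ unit and $\alpha^2+\beta^2=1$,
\begin{equation*}
v^\sft\widetilde\mu(e)v \ge C\beta^2-2K|\alpha\beta|+(M-K)\alpha^2\;.
\end{equation*}
Choosing $M$ with $(M-K)C>4K^2$ makes this quadratic form in $(\alpha,\beta)$ uniformly positive definite. This yields uniform positive definiteness of $\widetilde\mu(e)$ over $e\in\SS^{d-1}$.
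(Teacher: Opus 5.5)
Your proposal is correct. For the first assertion it takes a different route from the paper. The paper does not prove the tangential bound; it quotes it from \cite[equation~(2.14)]{FP24}. It only argues the consequence, using the same tangential/normal splitting and Cauchy's inequality as your Step~3.

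Your Steps~1--2 make the lemma self-contained, and I checked them:
\begin{itemize}
\item The directional derivatives $\partial_\eta a_e=2\eta^\sft D e$ and $\partial_\eta W_e=2g$ are right, and so are the resulting integrand and the completed square.
\item The endpoint analysis is right. Since $g=\oO((1\mp s)^2)$ by the equipotential condition and $f(\pm1)=0$, while $W_e\sim c(1\mp s)^2$, every term is integrable, including the discarded square.
\end{itemize}

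Two small corrections.

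First, drop the hedge ``except at isolated points''. $W_e$ is strictly positive on all of $(-1,1)$, because $f<0$ on $(-1,x_0)$, $f>0$ on $(x_0,1)$ and $W_e(\pm1)=0$. You actually need this already in Step~1: an interior zero of $W_e$ would make $g/\sqrt{W_e}$ and $g^2/W_e^{3/2}$ non-integrable.

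Second, you can replace the continuity/compactness argument by an explicit constant. For unit $\eta\perp e$,
\[
D_{\eta\eta}-\frac{D_{\eta e}^2}{a_e}=\min_{t\in\RR}\,(\eta+te)^\sft D(s)(\eta+te)\geq\Lambda^{-1}\min_{t\in\RR}(1+t^2)=\Lambda^{-1}\;.
\]
Since $\sqrt{W_e}/\lambda(e)$ integrates to $1$ over $[-1,1]$, this gives $C=\Lambda^{-1}$ directly. In Step~3, the condition $(M-K)C>K^2$ already suffices.

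What your route gains over the citation is a self-contained proof, with a constant that depends only on the ellipticity of $D$.
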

\begin{proof}
    The first statement is \cite[equation~(2.14)]{FP24}. Since \(\mu\) is continuous on \(\SS^{d-1}\), it is uniformly bounded. For \(\zeta\in\RR^d\), write \(\zeta=\zeta_T+a e\) with \(\zeta_T\cdot e=0\). The first statement controls the \(\zeta_T\)-part uniformly, while the added term \(M e\otimes e\) controls the \(a e\)-part. Choosing \(M\) large enough and absorbing the mixed term by Cauchy inequality gives \(\zeta\cdot\widetilde{\mu}(e)\zeta\geq c|\zeta|^2\), uniformly in \(e\in\SS^{d-1}\).
\end{proof}

\subsection{Well-posedness of the stochastic mean curvature flow}

We consider the PDE
\begin{equation} \label{eq:PDE_approximate_rho}
    \begin{split}
    \d_t \rho_\eps^\pm =& -\frac{1}{\eps} \mathfrak{c} \Big( \nabla \rho_\eps^\pm, \; \eps \, \nabla^2 \rho_\eps^\pm \, \big( \id - \rho_\eps^\pm \nabla^2 \rho_\eps^\pm \big)^{-1}, \; \eps \xi_\eps (t, x - \rho_\eps^\pm \nabla \rho_\eps^\pm) \pm \eps^\beta \Big)\\
    &+ M \big( \nabla \rho_\eps^\pm \otimes \nabla \rho_\eps^\pm \big) : \big( \nabla^2 \rho_\eps^\pm \, ( \id - \rho_\eps^\pm \nabla^2 \rho_\eps^\pm )^{-1} \big)\;.
    \end{split}
\end{equation}
Taylor expanding $\mathfrak{c}$ around $(\nabla \rho, 0, 0)$, one sees the candidate limit $\rho$ should satisfy the equation
\begin{equation} \label{eq:PDE_limit_rho}
    \md  \rho = \widetilde{\mu}(\nabla\rho) : \bigl(\nabla^2\rho\,(\id-\rho\nabla^2\rho)^{-1}\bigr)\,\md t+b (\nabla \rho) \bm{\sigma}(x-\rho \nabla\rho)\,\md \W\;.
\end{equation}
We fix $\vV_0 \Subset \Omega$ with smooth boundary $\d \vV_0$. Let
\begin{equation*}
    g(x) := 
    \left\{
    \begin{aligned}
        &- \dist (x, \d \vV_0)\;, \qquad x \in \vV_0\\
        &\phantom{11}\dist (x, \d \vV_0)\;, \qquad x \in \vV_0^c\;,
    \end{aligned}\right.
\end{equation*}
be the signed distance function to the boundary $\d \vV_0$. The main theorem of this subsection is the following. 

\begin{thm}[Well-posedness] \label{thm:well_posedness_smcf}
    Fix $\alpha\in\left(\frac{1}{3},\frac{1}{2}\right)$. There exists an open set $\oO$ containing $\d \vV_0$ such that for almost every realisation of the noise $W^{j}$, there exists a random $\tau>0$ independent of $\eps$ and the $\pm$ sign such that the following assertions hold: 
    \begin{enumerate}
    \item The PDE \eqref{eq:PDE_approximate_rho} with initial and boundary conditions
    \begin{equation} \label{eq:rho_initial_bondary_cond}
        \rho_\eps^\pm (0,\cdot) = g\;, \qquad |\nabla \rho_\eps^\pm| = 1 \; \text{on} \; \d \oO
    \end{equation}
    has a unique classical solution in $[0,\tau] \times \oO$. Furthermore, the solution $\rho_\eps^\pm$ satisfies the bounds 
    \begin{equation} \label{eq:rho_gradient_bounds}
        \sum_{k=0}^4 \bigl\| \nabla^k \rho_\eps^\pm(t,\cdot) \bigr\|_{\lL_{t,x}^\infty([0,\tau] \times \overline{\oO})} \lesssim 1\;, \quad \sum_{\ell=0}^{2} \bigl\|\d_t \nabla^\ell \rho_\eps^\pm \bigr\|_{\lL_{t,x}^\infty([0,\tau] \times \overline{\oO})} \lesssim \eps^{-\kappa}\;.
    \end{equation}
    \item The rough PDE \eqref{eq:PDE_limit_rho} with the same initial and boundary condition as \eqref{eq:rho_initial_bondary_cond} has a unique solution $(\rho, \rho')$ in the sense of Definition~\ref{def:RPDE} with $\rho' = b(\nabla\rho)\bm{\sigma}(x-\rho \nabla\rho)$. Furthermore, they satisfy
    \begin{equation*}
        \big( \nabla^k \rho, \nabla^k \rho' \big) \in \lL_x^\infty \dD_\W^{2\alpha} \; \; \text{for} \; \; k \leq 2\;, \quad \nabla^3\rho, \nabla^2 \rho' \in \cC_t^\alpha \lL_x^\infty \cap \lL_t^\infty \cC_x^1([0,\tau] \times \overline{\oO})\;.
    \end{equation*}
    \item There exists $\theta>0$ such that
    \begin{equation} \label{eq:rho_convergence}
    \bigl\| \rho_{\eps}^\pm - \rho \bigr\|_{\lL_t^\infty \cC_x^2 ([0,\tau] \times \overline{\oO} )} \lesssim \eps^\theta\;.
    \end{equation}
    \item $|\nabla \rho_\eps^\pm| = |\nabla \rho| \equiv 1$ on $[0, \tau] \times \overline{\oO}$. 
    \end{enumerate}
All proportionality constants above depend on the realisation of $\W$ but are independent of $\eps$. 
\end{thm}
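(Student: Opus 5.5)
The plan is to cast both \eqref{eq:PDE_approximate_rho} and \eqref{eq:PDE_limit_rho} in the form \eqref{e:RPDE} and apply Theorem~\ref{thm:stability} together with Lemma~\ref{lem:transform}. For the limit equation, set $H(x,u,p) := b(p)\,\bm\sigma(x-up)$, with $b$ extended smoothly and boundedly off a neighbourhood of $\SS^{d-1}$ so that $H\in\cC_b^7$. Set $F_0(x,u,p,q) := \widetilde\mu(p):\big(q(\id-uq)^{-1}\big)$ and $G(x,u,p) := |p|^2-1$. For $\eps>0$, the term $-\eps^{-1}\mathfrak c(p,\eps q(\id-uq)^{-1},\eps\xi_\eps\pm\eps^\beta)$ has to be split. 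Taylor expanding $\mathfrak c$ in its last two arguments around $(p,0,0)$ and using $\mathfrak c(p,0,0)=0$ together with Lemma~\ref{lem:expressions_coefficients}, it equals $\mu(p):q(\id-uq)^{-1} + b(p)\xi_\eps(t,x-up) + R_\eps$. The remainder satisfies $R_\eps = \oO(\eps|q|^2+\eps\,|\xi_\eps|^2+\eps^{\beta-1})=\oO(\eps^{1-2\kappa}+\eps^{\beta-1})$. The $b(p)\xi_\eps$ part is exactly $H\,\dot{\W}^\eps$. Everything else goes into $F_\eps$, including the $M$-term and $R_\eps$; a correction of the form $\eps\,\partial_\delta^2\mathfrak c\,\xi_\eps^2$ is of size $\eps^{1-2\kappa}$ and also goes there. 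This gives $\triplenorm{F_\eps-F_0}_{\fF,\mathrm{RPDE}}\lesssim\eps^{\theta}$, provided the $\cC^\alpha_t$ norm of $\eps\xi_\eps^2$ derivatives is controlled; $\kappa<1/3$ gives $\eps^{1-2\kappa-\alpha\cdot 2\kappa}\to0$, or we simply take a smaller Hölder exponent in time. The domain $\dD_3$ is chosen as a small neighbourhood of $\{(g,\nabla g,\nabla^2 g)\}$ on a tubular neighbourhood $\widehat\oO$ of $\d\vV_0$ where $g$ is smooth and $|g|\,|\nabla^2g|<1/2$.

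Next we verify the hypotheses of Theorem~\ref{thm:stability}. Ellipticity \eqref{e:condition_F_elliptic_RPDE} follows from Lemma~\ref{lem:ellipticity}: at $p\in\SS^{d-1}$ the map $q\mapsto\widetilde\mu(p):q(\id-uq)^{-1}$ has derivative close to $\widetilde\mu(p)$ for $|u q|$ small, and this persists on $\dD_3$ and for $F_\eps$ with $\eps$ small. Regularity of $G$ and non-tangentiality \eqref{e:condition_G_non_tangent_RPDE} are immediate. For the latter, choose $\widetilde\oO=\{|g|<r\}$, so $\nu=\pm\nabla g$ and $\partial_pG\cdot\nu = \pm2|\nabla g|^2=\pm2$. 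Compatibility holds because $|\nabla g|=1$. The key condition is the factorization \eqref{e:condition_GH_factorization}. With $D_pG=2p^\sft$, $D_uG=0$ and $D_xG=0$, its left side is $2p^\sft\big((D_xH)^\sft + p(D_uH)^\sft\big)$. Since $D_xH=b(p)\nabla\bm\sigma(x-up)$ and $D_uH=-b(p)\nabla\bm\sigma(x-up)\,p$, this equals $2b\,p^\sft(\nabla\bm\sigma)^\sft(1-|p|^2)$, which is $-2b\,p^\sft(\nabla\bm\sigma)^\sft G$. So $L = -2b(p)\,p^\sft(\nabla\bm\sigma)^\sft(x-up)$ works. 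This is exactly where the evaluation of the noise at $x-\rho\nabla\rho$ is crucial. Theorem~\ref{thm:stability} then gives existence, uniqueness and the $\cC^{0,2}$ stability on some $\oO\Subset\widetilde\oO$ with an $\eps$-independent $\tau$, yielding (2) and (3) with $d_\alpha(\W^\eps,\W)\lesssim\eps^\theta$ from Proposition~\ref{prop:BM_rough_paths}.

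Part (4) should follow from the factorization. Along characteristics, $\md G(\Theta_t)=G(\Theta_t)L\,\md\W$, so $\widetilde G$ vanishes wherever $G$ does initially. Equivalently, $w:=|\nabla\rho|^2-1$ satisfies a linear parabolic equation, obtained by differentiating the PDE, with zero initial and boundary data. The relevant structural fact is that $F$ evaluated at $|p|=1$, $q$ with $qp=0$ is compatible with the distance-function evolution: one checks that $\partial_p F\cdot$ terms produce $w$-linear terms only. Uniqueness then gives $w\equiv0$. I expect this to be the main obstacle for $\eps>0$. The $M$-term and the nonlinear $\mathfrak c$ must be checked to preserve $|\nabla\rho|=1$. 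I would try to show that, on $|p|=1$, the function $F_\eps(x,u,p,q)$ depends on $q$ only through $q(\id-uq)^{-1}$, and that differentiating in $x$ and contracting with $\nabla\rho$ gives $\nabla\rho\cdot\nabla\partial_t\rho$ as a combination of $w$ and $\nabla w$. This mirrors the classical identity for signed distance functions and uses $\nabla^2\rho\,\nabla\rho=\tfrac12\nabla w$. If that fails, I would reformulate by defining $\rho_\eps$ as the signed distance to the zero level set and comparing.

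Finally, the bounds \eqref{eq:rho_gradient_bounds} come from Lemma~\ref{lem:transform}(2)--(3). Uniform $\cC^{4+2\alpha}$ Schauder bounds for the transformed classical equations give $\nabla^k\rho_\eps$ bounded for $k\le4$ on a slightly smaller $\oO$, uniformly in $\eps$. The time-derivative bound $\eps^{-\kappa}$ is then read directly from the equation, since $\|\dot W^\eps\|_\infty\lesssim\eps^{-\kappa}$ and the remaining terms are bounded; for $\ell=1,2$ we differentiate the equation in $x$ and use the $\nabla^4$ bound.
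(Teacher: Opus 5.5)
Your proposal is correct and follows essentially the same route as the paper. It uses the same splitting into $F_\eps^\pm$, $F_0=\widetilde\mu(p):q(\id-uq)^{-1}$, $H=b(p)\bm{\sigma}(x-up)$ and $G=|p|^2-1$, the same factor $L=-2b(p)p^\sft\nabla\bm{\sigma}^\sft(x-up)$, Theorem~\ref{thm:stability} with Lemmas~\ref{lem:transform} and~\ref{lem:PDE_further_regularity} for existence, bounds and convergence, and, for part (4), a linear parabolic equation for $w=|\nabla\rho_\eps^\pm|^2-1$ whose zeroth-order part is proportional to $w$ because of the $x-up$ and $q(\id-uq)^{-1}$ structure.

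You should state explicitly two points. First, the last step is a maximum principle on the moving cylinder $\bigcup_t\{t\}\times\widehat X_t^{\eps;\pm}(\widetilde\oO)$, and this is what converts the time-dependent boundary condition from Theorem~\ref{thm:stability} into $|\nabla\rho_\eps^\pm|=1$ on the fixed $\d\oO$. Second, $|\nabla\rho|=1$ for the rough limit follows from \eqref{eq:rho_convergence}, not from a rough maximum principle.
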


\begin{rmk}
    The term $M (\nabla \rho_\eps^\pm \otimes  \nabla \rho_\eps^\pm): \nabla^2\rho_\eps^\pm(\id-\rho_\eps^\pm\nabla^2\rho_\eps^\pm)^{-1}$ is auxiliary. It is added to make the PDE uniformly elliptic. Since $|\nabla\rho_\eps^\pm|\equiv1$ not only on the boundary but on all of $\oO$, this additional term is actually $0$.
\end{rmk}

\begin{prop} \label{prop:rho_approximate_effect}
    Let $\rho_\eps^\pm$ be the solution to \eqref{eq:PDE_approximate_rho} specified in Theorem~\ref{thm:well_posedness_smcf}. Then we have
    \begin{equation} \label{eq:rho_approximate_effect}
    \left| \d_t\rho_\eps^\pm + \frac1\eps \mathfrak{c}(\nabla\rho_\eps^\pm, \eps \nabla^2 \rho_\eps^\pm, \eps\xi_\eps\pm \eps^\beta) \right| \lesssim \eps^{-\kappa}|\rho_\eps^\pm|
    \end{equation}
    uniformly on $[0,\tau] \times \oO$. 
\end{prop}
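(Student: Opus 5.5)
We compare the actual equation \eqref{eq:PDE_approximate_rho} with the ideal relation \eqref{eq:rho_ideal}. By Theorem~\ref{thm:well_posedness_smcf}(4) we have $|\nabla\rho_\eps^\pm|\equiv 1$ on $[0,\tau]\times\oO$, so $\nabla^2\rho_\eps^\pm\,\nabla\rho_\eps^\pm = 0$ there. Hence the auxiliary term $M(\nabla\rho_\eps^\pm\otimes\nabla\rho_\eps^\pm):\big(\nabla^2\rho_\eps^\pm(\id-\rho_\eps^\pm\nabla^2\rho_\eps^\pm)^{-1}\big)$ vanishes identically. Indeed, $(\id-\rho\nabla^2\rho)^{-1}$ is a power series in the symmetric matrix $\nabla^2\rho$, so it maps the kernel direction $\nabla\rho$ to itself, and $\nabla\rho^\sft\nabla^2\rho=0$. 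The left side of \eqref{eq:rho_approximate_effect} therefore equals $\eps^{-1}|\mathfrak{c}(e,\eps q_1,\delta_1)-\mathfrak{c}(e,\eps q_0,\delta_0)|$, where
\[
e=\nabla\rho_\eps^\pm,\quad q_0=\nabla^2\rho_\eps^\pm,\quad q_1=\nabla^2\rho_\eps^\pm(\id-\rho_\eps^\pm\nabla^2\rho_\eps^\pm)^{-1},
\]
\[
\delta_0=\eps\xi_\eps(t,x)\pm\eps^\beta,\quad \delta_1=\eps\xi_\eps(t,x-\rho_\eps^\pm\nabla\rho_\eps^\pm)\pm\eps^\beta .
\]

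Next we check that all arguments stay in the domain of Lemma~\ref{lem:CHL}. By \eqref{eq:rho_gradient_bounds}, $|q_0|,|q_1|\lesssim 1$, so $\eps|q_i|$ is small. By \eqref{eq:cond_BM_approximation}, $|\delta_i|\lesssim \eps^{1-\kappa}+\eps^\beta$. The vector $e$ lies on the unit sphere. So for $\eps$ small (and possibly after shrinking $\tau$ so that $\rho|\nabla^2\rho|<1/2$), $(e,\eps q_i,\delta_i)$ lies in the region where $\mathfrak{c}$ is smooth with bounded derivatives, uniformly. By the mean value theorem,
\[
\eps^{-1}|\mathfrak{c}(e,\eps q_1,\delta_1)-\mathfrak{c}(e,\eps q_0,\delta_0)|\lesssim |q_1-q_0| + \eps^{-1}|\delta_1-\delta_0| .
\]

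The two differences are estimated separately. For the matrices, $q_1-q_0=\nabla^2\rho\big[(\id-\rho\nabla^2\rho)^{-1}-\id\big]=\rho(\nabla^2\rho)^2(\id-\rho\nabla^2\rho)^{-1}$, so $|q_1-q_0|\lesssim|\rho_\eps^\pm|\lesssim\eps^{-\kappa}|\rho_\eps^\pm|$. For the noise terms, $\eps^{-1}|\delta_1-\delta_0|=|\xi_\eps(t,x-\rho\nabla\rho)-\xi_\eps(t,x)|\le \|\nabla_x\xi_\eps\|_\infty\,|\rho_\eps^\pm||\nabla\rho_\eps^\pm|$. Since $\nabla\sigma_j$ is bounded and $|\dot W^{j;\eps}|\lesssim\eps^{-\kappa}$, we have $\|\nabla_x\xi_\eps\|_\infty\lesssim\eps^{-\kappa}$, and with $|\nabla\rho|=1$ this term is $\lesssim\eps^{-\kappa}|\rho_\eps^\pm|$. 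Combining the two gives \eqref{eq:rho_approximate_effect}.

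The only delicate point is uniformity of the constants in Lemma~\ref{lem:CHL}: we need $\partial_q\mathfrak{c}$ and $\partial_\delta\mathfrak{c}$ bounded on a convex neighbourhood containing both points. This holds because $\mathfrak{c}$ is smooth on $\{R^{-1}<|e|<R,\;|q|+|\delta|<\delta_0\}$. Restricting to a slightly smaller closed set gives bounded derivatives, and the segment joining the two points stays inside it once $\eps$ is small.
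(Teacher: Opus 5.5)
Your proposal is correct and follows essentially the same route as the paper. The paper likewise uses $|\nabla\rho_\eps^\pm|\equiv 1$ to kill the auxiliary $M$-term, then bounds the difference of the two $\mathfrak{c}$-values by $\|\partial_q\mathfrak{c}\|\,|q_1-q_0|$ plus $\|\partial_\delta\mathfrak{c}\|\,\|\nabla\xi_\eps\|\,|\rho_\eps^\pm\nabla\rho_\eps^\pm|$. You additionally spell out why the auxiliary term vanishes and why both arguments stay in the domain of $\mathfrak{c}$; the paper leaves these points implicit.
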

\begin{proof}
By Theorem~\ref{thm:well_posedness_smcf}, we have $|\nabla \rho_\eps^\pm| = 1$. This implies
\begin{equation*}
    (\nabla \rho_\eps^\pm \otimes \nabla \rho_\eps^\pm) : \big(\nabla^2 \rho_\eps^\pm ( \id - \rho_\eps^\pm \nabla^2 \rho_\eps^\pm )^{-1}  \big) \equiv 0\;.
\end{equation*}
Hence, the left hand side of \eqref{eq:rho_approximate_effect} equals 
    \begin{equation*}
        \begin{split}
        \frac{1}{\eps} \Big| &\mathfrak{c}(\nabla\rho_\eps^\pm, \eps \nabla^2 \rho_\eps^\pm, \eps\xi_\eps\pm \eps^\beta)\\
        &-  \mathfrak{c} \big( \nabla \rho_\eps^\pm, \; \eps \, \nabla^2 \rho_\eps^\pm \, \big( \id - \rho_\eps^\pm \nabla^2 \rho_\eps^\pm \big)^{-1}, \; \eps \xi_\eps (t, x - \rho_\eps^\pm \nabla \rho_\eps^\pm) \pm \eps^\beta \big) \Big|\;.
        \end{split}
    \end{equation*}
    It can then be bounded by (uniformly over $(t,x) \in [0,\tau] \times \oO$)
    \begin{equation*}
        \|\d_q \mathfrak{c}\| \cdot \|\nabla^2 \rho_\eps^\pm\| \cdot \big|\id - (\id - \rho_\eps^\pm \nabla^2 \rho_\eps^\pm)^{-1} \big| + \|\d_\delta \mathfrak{c}\| \cdot \|\nabla \xi_\eps\| \cdot \big|\rho_\eps^\pm \nabla \rho_\eps^\pm \big| \lesssim \eps^{-\kappa} |\rho_\eps^\pm|\;,
    \end{equation*}
    where all the norms on the left hand side above are $\lL_{t,x}^\infty ([0,\tau] \times \oO)$, and the inequality comes from uniform boundedness of derivatives of $\rho_\eps^\pm$ in \eqref{eq:rho_gradient_bounds} and that $\|\nabla \xi_\eps\| \lesssim \eps^{-\kappa}$ (from the assumption on $\xi_\eps$ in Assumption~\ref{ass:assumption_overall}). This completes the proof. 
\end{proof}

The following proposition is in the same spirit of \cite[Theorem~2.2]{Web09}.

\begin{prop}[Interpretation of stochastic curvature flow]
\label{prop:interpretation}
Let $\Sigma_t:= \{x\in\oO: \rho(t,x) = 0\}$. Then we have the followings:
\begin{itemize}
\item[(i)] For every $t$ the function $x \mapsto \rho(t,x)$ is the signed distance function of $\Sigma_t$ on $\oO$.
\item[(ii)] If $X_0\in\Sigma_0$, then almost surely, there exists a stopping time $\tau>0$ such that the rough differential equation
\[
\md X_t= -\left(\left(\mu(\nabla\rho):\nabla^2\rho\right)\nabla\rho\right)(t,X_t) \,\md t - \left(b(\nabla\rho) \nabla\rho\right)(t,X_t) \bm{\sigma}(X_t)\, \md \W_t\;
\]
has a unique solution $(X_t)_{t \in [0,\tau]}$ with $X_t \in \Sigma_t$. 
\end{itemize}
\end{prop}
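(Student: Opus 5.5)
For (i), I will use Theorem~\ref{thm:well_posedness_smcf}(4), which gives $|\nabla\rho|\equiv 1$ on $[0,\tau]\times\overline{\oO}$. Together with the $\cC^2$ regularity of $\rho(t,\cdot)$, this makes $\rho(t,\cdot)$ a solution of the eikonal equation on $\oO$. For fixed $t$, the integral curves of $\nabla\rho$ are then straight lines along which $\rho$ grows at unit speed, since $\nabla^2\rho\,\nabla\rho=\tfrac12\nabla|\nabla\rho|^2=0$. Starting from $x\in\oO$, I follow the line $s\mapsto x-s\nabla\rho(t,x)$ for $s\in[0,\rho(t,x)]$. It reaches a point $\pi_t(x)=x-\rho\nabla\rho\in\Sigma_t$ at distance exactly $|\rho(t,x)|$, so $\dist(x,\Sigma_t)\le|\rho(t,x)|$. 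The reverse inequality comes from the $1$-Lipschitz property of $\rho$ on segments inside $\oO$, with $\rho=0$ on $\Sigma_t$. The sign convention follows because $\rho(0,\cdot)=g$ and the sign of $\rho$ distinguishes the two sides of $\Sigma_t$, which is a smooth hypersurface since $\nabla\rho\ne0$.

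A technical point needs care here. I must ensure that $\oO$ is a tubular neighbourhood, so that the segments stay inside $\oO$ and the nearest point of $\Sigma_t$ is attained through them. I would handle this by shrinking to $\{|\rho|<r\}$ for small $r$ and small $\tau$, using the $\lL^\infty$ bound on $\nabla^2\rho$ and the continuity of $\rho$ in time.

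For (ii), I will first solve the RDE. Its drift $-(\mu(\nabla\rho):\nabla^2\rho)\nabla\rho$ is Lipschitz in $x$ and bounded measurable-in-$t$/H\"older. The diffusion coefficient $Z(t,x):=-(b(\nabla\rho)\nabla\rho)(t,x)\bm\sigma(x)$ is $\cC^2$ in $x$. By Theorem~\ref{thm:well_posedness_smcf}(2), $Z$ is controlled in time with Gubinelli derivative computable from $(\nabla\rho)'=\nabla\rho'$. Hence RDEs with time-dependent controlled vector fields yield local existence and uniqueness up to a time at which $X_t$ leaves a compact subset of $\oO$; this exit time is the stopping time $\tau$.

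Next I will show $\rho(t,X_t)=0$. I plan a rough It\^o/chain rule for $\rho(t,X_t)$, where $\rho$ satisfies \eqref{eq:PDE_limit_rho} and $X$ solves the RDE. This gives
\[
\md\rho(t,X_t)=\big[\widetilde\mu(\nabla\rho):\nabla^2\rho(\id-\rho\nabla^2\rho)^{-1}+\nabla\rho\cdot(\text{drift})\big]\md t+\big[b(\nabla\rho)\bm\sigma(X-\rho\nabla\rho)+\nabla\rho\cdot Z\big]\md\W .
\]
The Stratonovich/geometric nature of $\W$ means no correction term appears, apart from cross terms. These cross terms involve $\nabla\rho'\cdot Z$ and $\nabla^2\rho\,Z\cdot Z$, and both must be checked. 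Using $|\nabla\rho|=1$, so that $\widetilde\mu$ reduces to $\mu$ in the contraction, the $\md\W$-coefficient becomes $b(\nabla\rho)[\bm\sigma(X-\rho\nabla\rho)-\bm\sigma(X)]$. The $\md t$-coefficient becomes $\mu:\nabla^2\rho[(\id-\rho\nabla^2\rho)^{-1}-\id]$. Both vanish when $\rho(t,X_t)=0$ and are Lipschitz in $\rho(t,X_t)$, and the same holds for the bracket cross terms, because $\nabla^2\rho\,\nabla\rho=0$ and $\nabla\rho\cdot\nabla\rho'$ is obtained by differentiating $|\nabla\rho|^2=1$.

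Hence $Y_t:=\rho(t,X_t)$ solves a linear-type RDE $\md Y=A_tY\,\md t+B_tY\,\md\W$ with controlled coefficients and $Y_0=0$. Uniqueness for such RDEs then gives $Y\equiv0$, that is, $X_t\in\Sigma_t$.

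The main obstacle is this rough chain rule for the composition of the spacetime-controlled field $\rho$ with the rough path $X$. I would prove it by smooth approximation. I take $\W^\eps$, the classical chain rule for $\rho_\eps^\pm$, and the stability in \eqref{eq:rho_convergence} together with Lemma~\ref{lem:transform}(3). Alternatively, one can expand $\rho(t,X_t)-\rho(s,X_s)$ to second order and verify that the remainder is $o(|t-s|)$ using the controlled bounds on $\nabla^k\rho$ for $k\le 3$.
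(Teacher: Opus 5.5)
Your argument is correct. Part (ii), however, follows a different route from the paper.

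\textbf{The paper's route for (ii).} The paper does not study $\rho(t,X_t)$ along the original RDE. It first solves an auxiliary RDE for $\widetilde X$ whose coefficients copy the RPDE \eqref{eq:PDE_limit_rho} exactly. The drift is $-\big((\widetilde\mu(\nabla\rho):\nabla^2\rho(\id-\rho\nabla^2\rho)^{-1})\nabla\rho\big)(t,\widetilde X_t)$. The noise coefficient is $-(b(\nabla\rho)\nabla\rho)(t,\widetilde X_t)$ multiplied by $\bm\sigma$ evaluated at the foot point $\widetilde X_t-\rho\nabla\rho$. With this choice, Lemma~\ref{lem:composition_2} and $|\nabla\rho|=1$ make the integrand $E$, its Gubinelli derivative $E'$ and the drift $F$ of $\rho(t,\widetilde X_t)$ vanish \emph{identically}, not merely on $\{\rho=0\}$. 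Hence $\rho(t,\widetilde X_t)\equiv0$ immediately. Along $\widetilde X$ the two RDEs then have the same coefficients, and uniqueness for the original RDE gives $X=\widetilde X$.

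\textbf{What your route costs.} In your version the coefficients of $\md\rho(t,X_t)$ vanish only on $\{\rho=0\}$. You therefore need an extra step: write $E$, $E'$ and $F$ as controlled multiples of $Y$, then invoke uniqueness for linear RDEs with controlled coefficients. For $E'$ this takes more than the vanishing of the cross terms you list. Two terms are each nonzero at $Y=0$ and cancel only against each other:
\begin{itemize}
\item $-b^2(\nabla\bm\sigma\,\nabla\rho)\otimes\bm\sigma$ evaluated at $X-\rho\nabla\rho$, which comes from $\rho'$ inside the Gubinelli derivative of the RPDE coefficient;
\item $+b^2(\nabla\bm\sigma\,\nabla\rho)\otimes\bm\sigma$ evaluated at $X$, which comes from $\nabla\rho^\sft Z'$.
\end{itemize}
A clean way to handle this is to use $|\nabla\rho|=1$ to write the $\md\W$-coefficient as $\Psi(t,X_t)$, where $\Psi=b(\nabla\rho)\big(\bm\sigma(x-\rho\nabla\rho)-\bm\sigma(x)\big)=\rho\,\beta$ with $\beta$ controlled in $t$ and smooth in $x$. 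The product rule then makes $E'$ a multiple of $Y$ automatically.

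\textbf{Trade-off and a side remark.} Your argument is the standard invariance-of-the-zero-set argument for the true flow. The paper's trick costs one auxiliary RDE but avoids any linear-RDE or Gronwall step. The chain rule you flag as the main obstacle is exactly Lemma~\ref{lem:composition_2}. Approximating through $\rho_\eps^\pm$ would be awkward, since those functions solve a different, $\mathfrak{c}$-based equation.

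\textbf{Part (i).} Both arguments work. The paper starts from the nearest point $y$ and uses that $x-y$ and $\nabla\rho(y)$ are both normal to $\Sigma_t$ at $y$. You run from $x$ along $-\nabla\rho(t,x)$ and then apply the $1$-Lipschitz bound. Your version has the small advantage of avoiding the question of the orientation of $\nabla\rho(y)$ relative to $x-y$.
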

\begin{proof}
    The first assertion seems to be standard (see for example \cite[Lemma~5]{NT25}). We give an argument for completeness. We need to show that $\rho(t, \cdot)$ is the signed distance function for $\Sigma_t$ for every $t$. For simplicity, we omit the $t$ in notation, and only consider $x \in \oO$ such that $\rho(x)>0$. 
    
    Since $|\nabla \rho| = 1$, for every path $\gamma$ moving along the direction of $\nabla \rho$ (that is, $\dot{\gamma}_s = \nabla \rho (\gamma_s)$), we must have that $\nabla \rho (\gamma_s) = \nabla \rho (\gamma_0)$ is constant in time. One can then deduce that such a path has to be a straight line. 

    Fix $x \in \oO$ with $\rho(x)>0$ (and shrinking $\oO$ if necessary). Then there exists a unique $y \in \Sigma = \{\rho=0\}$ such that $|x-y| = \dist (x, \Sigma)$. This point $y \in \Sigma$ also satisfies that $x-y \perp \tT_y \Sigma$, where $\tT_y \Sigma$ denotes the tangent plane of $\Sigma$ at $y$. 

    On the other hand, $\rho \equiv 0$ on $\Sigma$ implies that we also have $\nabla \rho (y) \perp \tT_y \Sigma$. Since $\text{dim} \tT_y \Sigma = d-1$, we must have $\nabla \rho(y)$ parallel to $x-y$. In particular, we have
    \begin{equation*}
        \nabla \rho \big( y + s(x-y) \big) = \nabla \rho(y) = \frac{x-y}{|x-y|}
    \end{equation*}
    for all $s \in [0,1]$. Hence, we get
    \begin{equation*}
        \rho(x) = \rho(x) - \rho(y) = \int_{0}^{1} \bracket{ \nabla \rho \big(y +s(x-y) \big), x-y} \, \md s = |x-y|\;.
    \end{equation*}
    This completes the proof for the first assertion. 
    
    For the second assertion, we first consider the solution $\widetilde{X}_t$ to the RDE
    \begin{align*}
        \md \widetilde{X}_t=& -\left(\left(\widetilde{\mu}(\nabla\rho)
    :
    \bigl(\nabla^2\rho\,(\id-\rho\nabla^2\rho)^{-1}\bigr)\right)\nabla\rho\right)(t,\widetilde{X}_t) \,\md t \\
        &- \left(b(\nabla\rho) \nabla\rho\right)(t,\widetilde{X}_t) \bm{\sigma}(\widetilde{X}_t - \rho(t,\widetilde{X}_t)\nabla\rho(t,\widetilde{X}_t))\, \md \W_t
    \end{align*}
    with initial condition $\widetilde{X}_0 = X_0 \in \Sigma_0$. Then it has a unique solution with derivative process $\widetilde{X}'_t =- \left(b(\nabla\rho) \nabla\rho\right)(t,\widetilde{X}_t) \bm{\sigma}(\widetilde{X}_t - \rho(t,\widetilde{X}_t)\nabla\rho(t,\widetilde{X}_t))$. For the composite process $\rho(t, \widetilde{X}_t)$, it has the form
    \begin{equation*}
        \rho(t, \widetilde{X}_t) = \rho(0, X_0) + \int_0^t \big( E(s), E'(s) \big) \,\md \W + \int_0^t F(s) \,\md s\;.
    \end{equation*}
    By Lemma~\ref{lem:composition_2}, one can show that $E, E'$ and $F$ are all $0$. This shows that
    \begin{equation*}
        \rho (t, \widetilde{X}_t) = \rho(0, \widetilde{X}_0) \equiv 0\;.
    \end{equation*}
    As a result, $\widetilde{X}_t$ is also the solution to the RDE
    \begin{equation*}
    \md \widetilde{X}_t= -\left(\left(\mu(\nabla\rho):\nabla^2\rho\right)\nabla\rho\right)(t,\widetilde{X}_t) \,\md t - \left(b(\nabla\rho) \nabla\rho\right)(t,\widetilde{X}_t) \bm{\sigma}(\widetilde{X}_t)\, \md \W_t\;.
    \end{equation*}
    Then by the uniqueness of the RDE, we have $X_t = \widetilde{X}_t \in \Sigma_t$.
\end{proof}

\subsection{Proof of Theorem~\ref{thm:well_posedness_smcf}}

We first write the equation \eqref{eq:PDE_approximate_rho} for \(\rho_\eps^\pm\) in the form of Theorem~\ref{thm:stability}:
\begin{equation*}
    \partial_t \rho_\eps^\pm = \fF_\eps^\pm(\rho_\eps^\pm) - (\d_\delta \mathfrak{c})(\nabla\rho_\eps^\pm,0,0) \, \xi_\eps(t,x-\rho_\eps^\pm\nabla\rho_\eps^\pm)\;,
\end{equation*}
where
\begin{equation} \label{eq:approximate_RPDE_rho_F}
    \begin{split}
    F_\eps^\pm(t,x,u,p,q)
    :=& -\frac{1}{\eps} \mathfrak{c} \bigl(p,\eps q(\id-u q)^{-1},\eps\xi_\eps(t,x-u p)\pm\eps^\beta\bigr)\\
    & + M(p\otimes p):q(\id-u q)^{-1}+\partial_\delta \mathfrak{c}(p,0,0) \, \xi_\eps(t,x-u p)\;.
    \end{split}
\end{equation}
As for the equation \eqref{eq:PDE_limit_rho}, by Lemma~\ref{lem:expressions_coefficients}, it has the form of Definition~\ref{def:RPDE} with
\begin{equation} \label{eq:limit_RPDE_rho_F}
    F_0(t,x,u,p,q) : =\left(-\partial_q \mathfrak{c}(p,0,0)+M p\otimes p\right):q(\id-u q)^{-1}=\widetilde{\mu}(p):q(\id-u q)^{-1}\;.
\end{equation}
In both cases, by Lemma~\ref{lem:expressions_coefficients} again, we have
\begin{equation} \label{eq:RPDE_rho_H}
    H(x,u,p) = - (\d_\delta \mathfrak{c})(p,0,0) \, \bm{\sigma} \big(x - u p \big) = b(p) \, \bm{\sigma} \big(x - u p \big)\;.
\end{equation}
We now fix the domains on which we check the hypotheses of Theorem~\ref{thm:stability}. Let
\[
    R:=\sup_{x\in\partial\vV_0}|\nabla^2\rho^0(x)|\;.
\]
By Lemma~\ref{lem:ellipticity} and the continuity of \(\widetilde\mu\), we may choose \(r_0\in(0,1)\) sufficiently small such that
\begin{equation}\label{e:mu_ellipticity_bound}
    \lambda_\mu
    :=
    \inf_{\substack{1-r_0\leq |p|\leq1+r_0\\|\eta|=1}}
    \eta^\sft\widetilde\mu(p)\eta>0\;,
    \quad
    M_\mu
    :=
    \sup_{1-r_0\leq |p|\leq1+r_0}
    \|\widetilde\mu(p)\|<+\infty\;.
\end{equation}
Choose 
\[a_0 := \frac{\lambda_\mu}{24RM_\mu}\] 
and set
\begin{equation*}
    \begin{split}
    \dD_2:=\Big\{ (u,p) \in\RR\times\RR^d:|u|<a_0,\;1-r_0<|p|<1+r_0 \Big\}\;,\\
    \dD_3 := \Big\{ (u,p,q) \in \RR \times \RR^d \times \RR^{d \times d}: (u,p) \in \dD_2\;, \; |q| < 3R \Big\}\;.
    \end{split}
\end{equation*}
On \(\dD_3\), the matrix \(\id - u \, q\) is invertible and \(q \, (\id-uq)^{-1}\) is uniformly bounded. Take a smooth neighbourhood \(\widehat{\oO}\) of \(\partial\vV_0\) such that
\[
    |\rho^0(x)|<\frac{a_0}{2}\;,\qquad |\nabla^2\rho^0(x)|<2R
\]
for all \(x\in\overline{\widehat{\oO}}\). Then choose the tubular neighbourhood \(\widetilde\oO=\{x:|\rho^0(x)|<a_0/3\} \Subset \widehat{\oO}\) of \(\partial\vV_0\), and choose \(T>0\) such that \(X_t^\eps(x,u,p)\in\widehat{\oO}\) for all \(t\in[0,T]\), \(x\in\overline{\widetilde{\oO}}\), \((u,p)\in\dD_2\) and $\eps \in [0,1]$. Finally, fix an open set \(\oO\) with smooth boundary such that \(\partial\vV_0\subset\oO\Subset\widetilde\oO\).

\begin{lem}\label{lem:convergence_Feps}
There exists $\theta>0$ such that for every realisation of $\W$, one has
\[
    \sup_{\substack{x\in\RR^d,\;(u,p,q)\in\dD_3\\ |\gamma|\leq4}}
    \bigl\|\nabla_{(x,u,p,q)}^\gamma(F_\eps^\pm-F_0)(\cdot,x,u,p,q)\bigr\|_{\cC^\alpha([0,T])}
    \lesssim \eps^\theta
\]
for all sufficiently small $\eps$. The proportionality constant depends on the realisation of the noise but is independent of $\eps$. 
\end{lem}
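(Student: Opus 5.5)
The plan is a Taylor expansion of $\mathfrak{c}$ around $(p,0,0)$ with the small parameters $\eps q(\id-uq)^{-1}$ and $\delta_\eps := \eps\xi_\eps(t,x-up)\pm\eps^\beta$. Set $\tilde q := q(\id-uq)^{-1}$, which together with all its $(u,q)$-derivatives is smooth and bounded on $\dD_3$. Since $\mathfrak{c}(p,0,0)=0$ and, by Lemma~\ref{lem:expressions_coefficients}, $-\partial_q\mathfrak{c}(p,0,0) = \mu(p)$, we can write
\begin{equation*}
F_\eps^\pm - F_0 = -\frac1\eps\Big[\mathfrak{c}(p,\eps\tilde q,\delta_\eps) - \mathfrak{c}(p,0,0) - \partial_q\mathfrak{c}(p,0,0):\eps\tilde q - \partial_\delta\mathfrak{c}(p,0,0)\,\delta_\eps\Big] \mp \eps^{\beta-1}\,\partial_\delta \mathfrak{c}(p,0,0)\;.
\end{equation*}
The last term involves no time dependence and is $\oO(\eps^{\beta-1})$ in every $\cC^k$ norm in $(x,u,p,q)$, because $\partial_\delta\mathfrak{c}(\cdot,0,0)$ is smooth on the annulus $1-r_0<|p|<1+r_0$ by Lemma~\ref{lem:CHL} (take $R$ there with $R^{-1}<1-r_0$). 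We need $\eps$ small enough that $\eps|\tilde q| + |\delta_\eps| < \delta_0$. This holds because $|\eps\xi_\eps|\lesssim \eps^{1-\kappa}$ by \eqref{eq:cond_BM_approximation}.

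For the bracket, use the integral form of the second-order Taylor remainder. It equals $\int_0^1(1-s)\,D^2_{(q,\delta)}\mathfrak{c}(p,s\eps\tilde q,s\delta_\eps)[(\eps\tilde q,\delta_\eps)^{\otimes2}]\,\md s$. Its $\lL^\infty$ size is $\lesssim \eps^{-1}(\eps+\eps^{1-\kappa})^2\lesssim \eps^{1-2\kappa}$. Derivatives in $(x,u,p,q)$ up to order four, by the chain rule, only produce further derivatives of $\mathfrak{c}$ (bounded, by smoothness from Lemma~\ref{lem:CHL}). They also produce derivatives of $\tilde q$, and spatial derivatives of $\sigma_j$ evaluated at $x-up$ multiplied by $\dot W^{j;\eps}_t$. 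Each such term still carries at least two factors from $\{\eps\tilde q,\delta_\eps\}$ or their derivatives, and each of these is $\oO(\eps^{1-\kappa})$, since $\sigma_j\in\cC^\infty_c$. So the $\lL^\infty_t$ part of the norm is $\lesssim\eps^{1-2\kappa}$, uniformly in $x\in\RR^d$ (compact support of $\sigma_j$ gives the uniformity in $x$).

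The main obstacle is the $\cC^\alpha([0,T])$ seminorm, since $\partial_t\delta_\eps$ involves $\eps\ddot W^\eps\sim\eps^{1-2\kappa}$. I would bound it crudely by the Lipschitz constant in $t$. Time enters only through $\dot W^{j;\eps}_t$, so the $t$-derivative of any term in the expansion gains at most one factor $\eps\,|\ddot W^{\eps}|\lesssim\eps^{1-2\kappa}$. For terms that are quadratic in the small parameters, this factor replaces one of them while the other stays of size $\eps^{1-\kappa}$. This gives a Lipschitz bound $\lesssim\eps^{-1}\cdot\eps^{1-\kappa}\cdot\eps^{1-2\kappa}=\eps^{1-3\kappa}$. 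Interpolating, $[g]_{\alpha}\le \|g\|_\infty^{1-\alpha}\,\mathrm{Lip}(g)^{\alpha}\cdot 2^{1-\alpha}$, or simply $[g]_\alpha\lesssim \mathrm{Lip}(g)$ on $[0,T]$; either way the result is $\lesssim\eps^{1-3\kappa}$. This is positive because $\kappa<1/3$, which is exactly why that assumption appears. Collecting terms gives the claim with $\theta:=\min(1-3\kappa,\beta-1)>0$. The constants depend on the realisation only through the bounds in \eqref{eq:cond_BM_approximation}.
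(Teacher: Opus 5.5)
Your proposal is correct and follows essentially the same route as the paper. You use a second-order Taylor expansion of $\mathfrak{c}$ at $(p,0,0)$, which gives $-\frac{1}{\eps}$ times the remainder $\mp\,\eps^{\beta-1}\partial_\delta\mathfrak{c}(p,0,0)$, then an $\eps^{1-2\kappa}$ bound on all $(x,u,p,q)$-derivatives of the remainder term, and finally a Lipschitz-in-time bound of order $\eps^{1-3\kappa}$ for the $\cC^\alpha([0,T])$ part; the only cosmetic difference is that the paper first writes $F_\eps^\pm(t,x,u,p,q)=\widehat F_\eps^\pm\big(t,x-up,p,q(\id-uq)^{-1}\big)$ and estimates $\widehat F_\eps^\pm-\widehat F_0$, whereas you apply the chain rule to the composition directly.
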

\begin{proof}
Define $\widehat{F}_\eps^\pm$ and $\widehat{F}_0$ by
\begin{equation*}
    \begin{split}
    \widehat{F}_\eps^\pm(t,x,p,q) &:= -\frac{1}{\eps} \mathfrak{c} \big(p, \, \eps q, \,\eps \xi_\eps(t,x) \pm \eps^\beta \big) + M(p\otimes p): q + \d_\delta \mathfrak{c}(p,0,0) \, \xi_\eps(t,x)\;,\\
    \widehat{F}_0(t,x,p,q) &:= \left(-\partial_q \mathfrak{c}(p,0,0) + M (p \otimes p) \right): q\;.
    \end{split}
\end{equation*}
Then we have
\begin{equation*}
    F_\eps^\pm (t,x,u,p,q) = \widehat{F}_\eps^\pm \big(t, \, x-up, \, p, \, q (\id - uq)^{-1} \big)\;,
\end{equation*}
and the same relation for $\widehat{F}_0$ and $F_0$. The map
\begin{equation*}
    (x,u,p,q) \mapsto \bigl(x-up, \, p, \, q(\id-uq)^{-1}\bigr)
\end{equation*}
has uniformly bounded derivatives up to any finite order on \(\RR^d\times\dD_3\), and the range of its last two components is within the bounded set
\begin{equation*}
    1 - r_0 < |p| < 1 + r_0\;, \qquad \big| q \, (\id - uq)^{-1} \big| < 10 R\;.
\end{equation*}
Hence, it suffices to prove the corresponding convergence of derivatives of \(\widehat F_\eps^\pm-\widehat F_0\) uniformly for $x \in \RR^d$ and $(p,q)$ in compact sets of $\RR^d \times \RR^{d \times d}$. 

Taylor expanding $\mathfrak{c}$ at $(p,0,0)$ and using $\mathfrak{c}(p,0,0) = 0$, we get
\begin{equation*}
    \widehat{F}_\eps^\pm(t,x,p,q) - \widehat{F}_0(t,x,p,q) = -\frac{1}{\eps} \cdot \widehat{\rR}_\eps^{\pm}(t,x,p,q) \mp \eps^{\beta-1} \, (\d_\delta \mathfrak{c})(p,0,0)\;,
\end{equation*}
where $\widehat{\rR}_\eps^\pm$ is the second order Taylor remainder given by
\begin{equation*}
    \begin{split}
    \widehat{\rR}_\eps^\pm (t,x,p,q) &= \mathfrak{c} \big( p, \, \eps q, \, \eps \xi_\eps \pm \eps^\beta \big) - \mathfrak{c}(p,0,0)\\
    &\phantom{111}- (\d_\delta \mathfrak{c})(p,0,0) \cdot (\eps \xi_\eps \pm \eps^\beta) - (\d_q \mathfrak{c})(p,0,0) : \eps q\;.
    \end{split}
\end{equation*}
Since $\eps |\nabla^k \xi_\eps| \lesssim \eps^{1-\kappa}$ for every $k$, we have
\begin{equation*}
    |D^\gamma \widehat{\rR}_\eps^\pm| \lesssim \eps^{2(1-\kappa)}
\end{equation*}
for any fixed multi-index $\gamma$ with respect to $(x,p,q)$, and hence
\begin{equation*}
    \| D_{x,p,q}^\gamma (\widehat{F}_\eps^\pm - \widehat{F}_0) \|_{\lL_t^\infty \lL_{x,p,q}^\infty} \lesssim \eps^{1-2\kappa}
\end{equation*}
uniformly in $\eps$, $t \in [0,T]$ for fixed $T$, $x \in \RR^d$ and $(p,q)$ in compact sets of $\RR^d \times \RR^{d \times d}$. 

As for H\"older continuity in time, we note that
\begin{equation*}
    \eps \|\d_t \nabla^k \xi_\eps\|_{\lL_{t,x}^\infty} \lesssim \eps^{1-2\kappa}\;,
\end{equation*}
and hence 
\begin{equation*}
    \| D_{x,p,q}^\gamma (\widehat{F}_\eps^\pm - \widehat{F}_0) \|_{\cC^1_t \lL_{x,p,q}^\infty} \lesssim \eps^{1-3\kappa}\;.
\end{equation*}
This completes the proof. 
\end{proof}

\begin{proof}[Proof of Theorem~\ref{thm:well_posedness_smcf}]
    Recall the forms of $F_\eps^\pm$, $F_0$ and $H$ from \eqref{eq:approximate_RPDE_rho_F}, \eqref{eq:limit_RPDE_rho_F} and \eqref{eq:RPDE_rho_H}. The boundary condition corresponds to
    \begin{equation} \label{eq:RPDE_rho_G}
        G_\eps(x,u,p)=G(x,u,p):=|p|^2-1\;.
    \end{equation}
    We verify the assumptions of Theorem~\ref{thm:stability}. We start with assumptions related to the boundary condition $G$. Since \(\rho^0\) is the signed distance function to $\d \vV_0$, we have
    \begin{equation*}
        G(x,\rho^0(x),\nabla\rho^0(x))=|\nabla\rho^0(x)|^2-1=0
    \end{equation*}
    for all $x \in \overline{\widetilde{\oO}}$, and in particular for $x \in \d \widetilde\oO$. For non-tangentiality, we note that the outward normal $\nu$ on $\d \widetilde\oO$ satisfies $\nu = \pm \nabla \rho^0$. Hence, on $\d \widetilde\oO$, we have
    \begin{equation*}
        D_p G (x,\rho^0,\nabla\rho^0) \cdot \nu = 2 \, \nabla\rho^0 \cdot \nu = \pm 2\;.
    \end{equation*}
    This verifies the non-tangentiality condition \eqref{e:condition_G_non_tangent_RPDE} and hence Assumption 5. 

    For the compatibility condition \eqref{e:condition_GH_factorization}, a direct computation gives 
    \[
        -D_xG(D_pH)^\sft+D_uG(H-(D_pH) p)^\sft+D_pG((D_xH)^\sft+p(D_uH)^\sft)=2b(p)(1-|p|^2)p^\sft\nabla\bm{\sigma}^\sft(x-up)\;.
    \]
    Thus condition~\eqref{e:condition_GH_factorization} holds with
    \(L(x,u,p):=-2b(p)p^\sft\nabla\bm{\sigma}^\sft(x-up)\).
    The required regularity of \(H\) and \(L\) follows from the smoothness of \(b\) on the annulus and of \(\bm{\sigma}\). This verifies Assumption 4. 

    Assumption 3 is immediate from the definition \eqref{eq:RPDE_rho_G}, while Assumption 2 is implied by Lemma~\ref{lem:convergence_Feps}. 
    
    We now turn to Assumption 1 on ellipticity of $F_\eps^\pm$ and $F_0$. In view of Lemma~\ref{lem:convergence_Feps}, it suffices to check that of $F_0$. Note that
    \[
        D_q\bigl(q(\id-uq)^{-1}\bigr)[h] = (\id - u \, q)^{-1} \, h \, (\id - u \, q)^{-1}\;.
    \]
    By the choice of $a_0$ above and the fact that $\lambda_\mu\leq M_\mu$,
    \[
        \big\|(\id-uq)^{-1}-\id\big\|
        \leq \frac{|u|\,\|q\|}{1-|u|\,\|q\|}
        \leq \frac{\lambda_\mu}{7M_\mu}
    \]
    uniformly on $\dD_3$. Hence, by \eqref{e:mu_ellipticity_bound}, for every $\eta \in \RR^d$,
    \begin{align*}
        D_q F_0(t,x,u,p,q)[\eta \otimes\eta]
        &= \widetilde{\mu}(p):(\id-uq)^{-1}(\eta\otimes\eta)(\id-uq)^{-1}\\
        &\geq \left[\lambda_\mu
        -M_\mu\left(2\frac{\lambda_\mu}{7M_\mu}
        +\frac{\lambda_\mu^2}{49M_\mu^2}\right)\right]|\eta|^2
        \geq \frac{\lambda_\mu}{2}|\eta|^2\;.
    \end{align*}
    Thus $F_0$ is uniformly elliptic on $\dD_3$. Combining with Lemma~\ref{lem:convergence_Feps}, we obtain
    \[
        \triplenorm{F_\eps^\pm-F_0}_{\mathcal F,\mathrm{RPDE}} \lesssim \eps^\theta
    \]
    for a possibly different $\theta > 0$.
    In particular, \(F_\eps^\pm\) belongs to the fixed neighbourhood of \(F_0\) required in Theorem~\ref{thm:stability} for all sufficiently
    small \(\eps\).

    Hence, by Theorem~\ref{thm:stability}, Lemma~\ref{lem:PDE_further_regularity} with $k=2$, and Assertion~2 of Lemma~\ref{lem:transform}, there exists $\tau>0$ such that
    \begin{equation*}
        \|\rho_\eps^\pm - \rho\|_{\lL_t^\infty \cC_x^2 ([0,\tau] \times \overline{\oO})} \lesssim |\!|\!| F_\eps^\pm - F_0 |\!|\!| + d_\alpha (\W^\eps, \W) \lesssim \eps^\theta\;,
    \end{equation*}
    together with the desired spatial derivative bounds for $\rho_\eps^\pm$ in \eqref{eq:rho_gradient_bounds}. Moreover, $\rho_\eps^\pm$ satisfies the boundary condition
    \begin{equation} \label{eq:rho_boundary_cond_moving_domain}
        |\nabla \rho_\eps^\pm (t)| = 1 \quad \text{on} \; \; \d \oO^{\eps;\pm}_t\;, \quad \oO_t^{\eps;\pm} = \big\{ \widehat{X}_t^{\eps;\pm} (y): \; y \in \widetilde{\oO} \big\}\;.
    \end{equation}
    Moreover, we have $\oO\Subset\oO_t^{\eps;\pm}$ for every $t\in[0,\tau]$, all sufficiently small $\eps$, and both signs.
    Furthermore, differentiating the equation for \(\rho_\eps^\pm\) in space up to two derivatives and using \(\mathfrak{c}(e,0,0)=0\), the smoothness of \(\mathfrak{c}\), and the assumptions \eqref{eq:cond_noise_form} and \eqref{eq:cond_BM_approximation} on the noise $\xi_\eps$, one also deduces the bound for $\|\d_t \nabla^\ell \rho_\eps^\pm\|$ in \eqref{eq:rho_gradient_bounds}. 

    Now, it is almost there except that the boundaries for $\rho_\eps^\pm$ depend on time. Recall that
    \begin{equation*}
        \Phi_t^{\eps;\pm}(x) = \big( x,v_\eps^{\pm}(t,x), \nabla v_\eps^{\pm}(t,x) \big)\;, \quad \widehat{\Theta}_t^{\eps;\pm} = \Theta_t^\eps \circ \Phi_t^{\eps;\pm}\;.
    \end{equation*} 
    For every fixed $\eps >0$, since $W_t^{j,\eps}$ is smooth, we deduce that $\rho_\eps^{\pm} $ is a classical solution to \eqref{eq:PDE_approximate_rho} on the domain
    \begin{equation} \label{eq:moving_domain}
        \sS_\tau^{\eps;\pm} := \big\{ (t,x): t \in [0,\tau]\;, \; x \in \oO_t^{\eps;\pm} \big\}
    \end{equation}
    with initial data $\rho_\eps^{\pm}(0) = \rho^0$ defined on $\oO_0^{\eps;\pm} = \widetilde\oO$ and satisfies the boundary condition \eqref{eq:rho_boundary_cond_moving_domain}. We now need to justify that $|\nabla \rho_\eps^{\pm}| = 1$ on $\d \oO$. Indeed, we will show the identity on all of \eqref{eq:moving_domain}. Let
\begin{equation} \label{eq:w_boundary_defn}
    w_\eps^{\pm} := |\nabla \rho_\eps^{\pm}|^2 - 1\;.
\end{equation}
We shall derive the equation for $w_\eps^{\pm}$ and show it satisfies the regularity assumption needed to apply Lemma~\ref{lem:moving_domain_maximum_principle} from the fixed-domain representation. 

Since \(W^\eps\) is smooth and the coefficients and initial data are smooth, standard parabolic regularity gives, \(v_\eps^{\pm}\in\cC^{1,4}_{\mathrm{loc}}((0,T] \times \widetilde\oO)\), while \(v_\eps^{\pm}\), \(\nabla v_\eps^{\pm}\), and \(\nabla^2 v_\eps^{\pm}\) are continuous up to \([0,T] \times \overline{\widetilde{\oO}}\). Thus the quantities below are continuous up to the fixed boundary and have the required interior differentiability. 

Since \(\widehat X_t^{\eps;\pm}\) is a diffeomorphism from \(\overline{\widetilde{\oO}}\) onto \(\overline{\oO_t^{\eps;\pm}}\), with inverse continuous up to the boundary, \(\nabla\rho^\pm_\eps(t,\cdot)=\widehat P_t^{\eps;\pm}\circ(\widehat X_t^{\eps;\pm})^{-1}\) is continuous on the closure of the moving cylinder and has the required interior differentiability. 
One can then deduce that $w^\pm_\eps\in\cC(\overline{\sS_\tau^{\eps;\pm}})\cap\cC^{1,2}(\sS_\tau^{\eps;\pm})$. 

Introduce the shorthand notation
\[
    A:=\id -\rho_\eps^{\pm} \nabla^2 \rho_\eps^{\pm}\;,\quad
    y_\eps^\pm := x - \rho_\eps^{\pm} \nabla \rho_\eps^{\pm}\;,
\]
\[
    \delta_\eps^\pm:=\eps\xi_\eps(t,y_\eps^\pm) \pm \eps^\beta\;, \quad
    \mu_\eps^\pm := M \nabla \rho_\eps^{\pm} \otimes \nabla \rho_\eps^{\pm} -\d_q \mathfrak{c} \bigl(\nabla \rho_\eps^{\pm}, \, \eps \nabla^2 \rho_\eps^{\pm} A^{-1}, \, \delta_\eps^\pm \bigr)\;.
\]
Differentiating the relation \eqref{eq:w_boundary_defn} and using the equation \eqref{eq:PDE_approximate_rho} for $\rho_\eps^\pm$, we get
\[
 \partial_t w_\eps^{\pm}
    =\sum_{i,j=1}^d a_{ij}^{\eps;\pm} \d_{ij}^2 w_\eps^{\pm}
    +\sum_{i=1}^d b_i^{\eps;\pm} \d_i w_\eps^{\pm} + c^{\eps;\pm} w_\eps^{\pm}
\]
on $\sS_\tau^{\eps;\pm}$, where the coefficients are given by
\begin{equation*}
\begin{split}
    a^{\eps;\pm} =& A^{-1} \mu_\eps^\pm A^{-1}\;,\\
    b^{\eps;\pm} =& -\eps^{-1} \d_e \mathfrak{c} \bigl(\nabla \rho_\eps^{\pm}, \, \eps \nabla^2 \rho_\eps^{\pm} A^{-1}, \, \delta_\eps^\pm \bigr)\\
    &+ \rho_\eps^{\pm} \d_\delta \mathfrak{c} \bigl(\nabla \rho_\eps^{\pm}, \, \eps \nabla^2 \rho_\eps^{\pm} A^{-1}, \,\delta_\eps^\pm \bigr) \nabla\xi_\eps(t,y_\eps^\pm)
    + 2 M \nabla^2 \rho_\eps^{\pm}A^{-1} \nabla\rho_\eps^{\pm}\;,\\
    c^{\eps;\pm} =& 2 \mu_\eps^\pm: \bigl(\nabla^2 \rho_\eps^{\pm}A^{-1} \bigr)^2
    +2 \d_\delta \mathfrak{c} \bigl(\nabla \rho_\eps^{\pm}, \, \eps \nabla^2 \rho_\eps^{\pm}A^{-1}, \, \delta_\eps^\pm \bigr) \nabla \rho_\eps^{\pm} \cdot \nabla \xi_\eps(t,y_\eps^\pm)\;.
\end{split}
\end{equation*}
The matrix \(a^{\eps;\pm}\) is positive definite for all small \(\eps\): indeed, \(\mu_\eps^\pm\) converges uniformly to \(\widetilde\mu(p)\), which is uniformly positive definite by Lemma~\ref{lem:ellipticity}, and \(A\) is uniformly invertible on the time interval under consideration. For every fixed $\eps>0$, these coefficients are bounded on $\sS_\tau^{\eps;\pm}$. Moreover, $w_\eps^{\pm} = 0$ on $\{0\} \times \overline{\widetilde{\oO}}$ and on $\d \oO_t^{\eps;\pm}\times \{t\}$ for $t\in [0,\tau]$. Hence, by Lemma~\ref{lem:moving_domain_maximum_principle}, we have $w_\eps^{\pm} \equiv 0$ on all of $\sS_\tau^{\eps;\pm}$. Since $[0,\tau] \times \overline{\oO} \subset \sS_\tau^{\eps;\pm}$, this shows that $|\nabla \rho_\eps^\pm| \equiv 1$ on $[0,\tau] \times \overline{\oO}$ and in particular on $[0, \tau] \times \d \oO$. 

Hence, we obtain the solution $\rho_\eps^\pm$ and $\rho$ to \eqref{eq:PDE_approximate_rho} and \eqref{eq:PDE_limit_rho} with the required initial and boundary conditions on the domain $[0,\tau] \times \overline\oO$. Finally, the convergence \eqref{eq:rho_convergence} implies that we also have $|\nabla \rho| = 1$ on $\overline\oO$. This completes the proof. 
\end{proof}

\section{Convergence to the stochastic mean curvature flow}
\label{sec:sub_sup_convergence}

In this section, we rigorously construct sub- and super-solutions to \eqref{SAC}, and proves the convergence of the interface of $u_\eps$ to the direction-dependent stochastic curvature flow. The arguments follow the lines of \cite[Section~3]{Web09}, but with a more sophisticated ansatz incorporating the nonlinear diffusion. 

\subsection{Construction of sub- and super-solutions}
\label{sec:sub_sup}

Fix an initial surface $\Sigma_0$ as in Theorem \ref{thm:main}. Let $\rho_\eps^\pm$ be the solutions to \eqref{eq:PDE_approximate_rho} in Theorem~\ref{thm:well_posedness_smcf} on $[0,\tau] \times \oO$. Let
\begin{equation*}
    \Sigma_t^{\eps;\pm} := \big\{ x \in \oO: \rho_\eps^\pm (t,x) = 0 \big\}\;.
\end{equation*}
Let $\vV_t^{\eps;\pm}$ be the bounded region enclosed by $\Sigma_t^{\eps;\pm}$. Choose $d_*>0$ (by shortening the time interval if necessary) such that for all $ t \in [0, \tau]$, the $(3d_*)$-neighborhood of $\Sigma^{\eps;\pm}_t$ is contained in $\oO$. For each $t \in [0,\tau]$, we split $\Omega$ into disjoint sets $\Omega = \dD_1^{\eps;\pm} \cup \dD_2^{\eps;\pm} \cup \dD_3^{\eps;\pm}$ (omitting $t$ in the notation) with
\begin{equation*}
    \dD_1^{\eps;\pm} = \big\{ x: \dist (x, \Sigma_t^{\eps;\pm}) < d_*  \big\}\;, \quad \dD_2^{\eps;\pm} = \big\{ x: \dist (x, \Sigma_t^{\eps;\pm}) > 2 d_*  \big\}
\end{equation*}
and $\dD_3^{\eps;\pm} = \Omega \setminus \big( \dD_1^{\eps;\pm} \cup \dD_2^{\eps;\pm} \big)$. Choose a smooth cut-off function \(\vartheta\in\cC^\infty(\RR;[0,1])\) such that
\[
    \vartheta(s)=0\quad\text{for } |s|\leq d_*\;,\quad
    \vartheta(s)=1\quad\text{for } |s|\geq 2d_*\;,
\]
and smooth in between. Fix $a,\beta$ with $\beta \in (1, 2-\kappa)$ and $a \in (\beta-1,\beta)$. Define 
\[
    \delta_\eps^\pm(t,x):=\eps\xi_\eps(t,x)\pm\eps^\beta\;,
\]
and $\widehat{u}_\eps^\pm$ and $\overline{u}_\eps^\pm$ by
\[
 \widehat u_\eps^\pm(t,x)=\uU\left(\frac{\rho_{\eps}^{\pm}(t,x)}{\eps} \pm\eps^a,
 \nabla\rho_\eps^\pm(t,x),
 \eps \nabla^2\rho_\eps^\pm(t,x),
 \delta_\eps^\pm(t,x) \right)\;,
\]
and
\begin{equation} \label{eq:u_bar_def}
 \overline{u}_\eps^\pm(t,x):=
 \begin{cases}
 U_+(\delta_\eps^\pm(t,x))\;,& x\notin \vV_t^{\eps;\pm}\;,\\
 U_-(\delta_\eps^\pm(t,x))\;,& x\in \vV_t^{\eps;\pm}\;.
 \end{cases}
\end{equation}
We then define the sub- and super-solutions $u_\eps^\pm$ by
\begin{equation} \label{eq:sub_sup_def}
 u_\eps^\pm(t,x):=
 \begin{cases}
     \bigl(1-\vartheta(\rho_\eps^\pm(t,x))\bigr)\widehat u_\eps^\pm(t,x)
 +
 \vartheta(\rho_\eps^\pm(t,x))\overline{u}_\eps^\pm(t,x)\;, & x\in\oO\;,\\
 \overline{u}_\eps^\pm(t,x)\;,& x\notin \oO\;.
 \end{cases}
\end{equation}
We note that the right hand side above is well defined. Indeed, $\vartheta(\rho_\eps^\pm)=0$ in $\dD_1^{\eps;\pm}$ and $\vartheta(\rho_\eps^\pm)=1$ in $\dD_2^{\eps;\pm}$, while
\[
    \operatorname{supp}\bigl(1-\vartheta(\rho_\eps^\pm)\bigr)
    \subset \dD_1^{\eps;\pm}\cup\dD_3^{\eps;\pm}\subset\oO\;.
\]
Thus $\bigl(1-\vartheta(\rho_\eps^\pm)\bigr)\widehat u_\eps^\pm$ is always well-defined, and the jump of $\overline u_\eps^\pm$ across $\Sigma_t^{\eps;\pm}\subset\dD_1^{\eps;\pm}$ does not contribute to $u_\eps^\pm$. Moreover, since a neighbourhood of $\partial\oO$ is contained in $\dD_2^{\eps;\pm}$, the two cases in \eqref{eq:sub_sup_def} agree near $\partial\oO$.
In particular, $u_\eps^\pm=\widehat u_\eps^\pm$ in $\dD_1^{\eps;\pm}$, while $u_\eps^\pm=\overline{u}_\eps^\pm$ in $\dD_2^{\eps;\pm}$.  Also recall the choice $\psi_{ij}$ from \eqref{eq:choice_psi}. We have the following comparison statement.

\begin{lem}\label{lem:sub_sup}
There exists a (random) $\eps_0>0$ such that for all $\eps\leq \eps_0$ and for $0 \leq t \leq \tau$, we have 
\[
 u_\eps^-(t,x) \leq u_{\eps} (t,x) \leq u_\eps^+(t,x)\;,
\]
for every solution $u_{\eps}(t,x)$ of (\ref{SAC}) with initial data satisfying  $u_\eps^-(0,x) \leq u_{\eps} (0,x) \leq u_\eps^+(0,x)$.
\end{lem}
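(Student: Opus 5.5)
The plan is to apply the parabolic comparison principle for the quasilinear operator $\nN_\eps$ from \eqref{eq:operator_nonlinear} with Neumann boundary condition. It then suffices to show that $u_\eps^+$ is a (viscosity/weak) super-solution and $u_\eps^-$ a sub-solution, i.e.
\[
\nN_\eps u_\eps^+\geq 0\;,\qquad \nN_\eps u_\eps^-\leq 0\quad\text{on }[0,\tau]\times\Omega\;,
\]
and that the Neumann condition holds on $\partial\Omega$. Near $\partial\Omega$ we have $u_\eps^\pm=U_\pm(\delta_\eps^\pm)$ with $\sigma_j$ compactly supported, so there $\delta_\eps^\pm=\pm\eps^\beta$ is constant in $x$ and the normal derivative vanishes. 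Since $D$ depends on $u$, comparison for $\partial_t u-\div(D(u)\nabla u)-\eps^{-2}f(u)$ follows from the standard argument: subtract the two inequalities and use the Lipschitz bounds on $D,D',f$ on the bounded range of values, together with $\cC^{1,2}$ regularity (or at least a weak formulation across the gluing region).

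\textbf{Region $\dD_2^{\eps;\pm}$.} Here $u_\eps^\pm=U_\pm(\delta_\eps^\pm)$ is spatially smooth (the jump sits inside $\dD_1$). By \eqref{eq:root_shift_f}, $\eps^{-2}f(U_\pm(\delta_\eps^\pm))=-\eps^{-2}\delta_\eps^\pm=-\eps^{-1}\xi_\eps\mp\eps^{\beta-2}$, so
\[
\nN_\eps u^\pm_\eps=\pm\eps^{\beta-2}+\partial_t U_\pm(\delta_\eps^\pm)-\div(\cdots)\;.
\]
The last two terms are $\oO(\eps^{1-2\kappa})$ by \eqref{eq:cond_BM_approximation}. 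Since $\beta<2-\kappa$, the term $\pm\eps^{\beta-2}$ dominates and has the right sign.

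\textbf{Region $\dD_1^{\eps;\pm}$.} Here $u^\pm_\eps=\widehat u_\eps^\pm$. I would redo the computation leading to \eqref{e:Nv_expression} with $z=\rho_\eps^\pm/\eps\pm\eps^a$ and $\delta=\delta_\eps^\pm$ evaluated at $x$, using the bounds \eqref{eq:rho_gradient_bounds} in place of the Postulation. This gives
\[
\nN_\eps\widehat u_\eps^\pm=\tfrac1\eps\uU_z\Big(\partial_t\rho_\eps^\pm+\tfrac1\eps\mathfrak c(\nabla\rho^\pm_\eps,\eps\nabla^2\rho^\pm_\eps,\delta_\eps^\pm)\Big)\pm\eps^{\beta-2}+\oO(\eps^{-\kappa})+\oO(\eps^{a-1})\;,
\]
where the $\eps^{a-1}$ comes from the $z$-shift in the $\psi_{ij}$ mismatch. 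Proposition~\ref{prop:rho_approximate_effect} bounds the bracket by $\eps^{-\kappa}|\rho_\eps^\pm|$, so the first term is at most $\eps^{-\kappa}\,\eps^{-1}|\rho^\pm_\eps|\,\uU_z$. Writing $\rho=\eps(z\mp\eps^a)$ and using the exponential decay \eqref{e:CHL_decay} of $\uU_z$, this is $\lesssim\eps^{-\kappa}\sup_z(|z|+1)e^{-\lambda|z|}\lesssim\eps^{-\kappa}$. All errors are therefore $o(\eps^{\beta-2})$, because $a-1>\beta-2$ and $-\kappa>\beta-2$.

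\textbf{Main obstacle: the sign of $\uU_z$ times the bracket.} The bracket is not signed, so I would not try to use its sign. Instead I bound the whole first term absolutely as above and let $\pm\eps^{\beta-2}$ dominate. The role of the shift $\pm\eps^a$ is only in the initial ordering and in the transition region.

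\textbf{Region $\dD_3^{\eps;\pm}$.} Here $|\rho|\geq d_*$, so $z\sim d_*/\eps$ and $\widehat u^\pm_\eps-\overline u^\pm_\eps$ together with its derivatives are $\oO(e^{-\lambda d_*/\eps})$ by \eqref{e:CHL_decay}. The cut-off terms, including the $\eps^{-2}$ from $f$ and the derivatives of $\vartheta$, are then exponentially small. For the reaction term I would use $f(u^\pm)=f(\overline u^\pm)+\oO(e^{-c/\eps})$. This step must be checked carefully but is routine.

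Combining the three regions gives $\pm\nN_\eps u^\pm_\eps\geq\tfrac12\eps^{\beta-2}>0$ for $\eps\leq\eps_0$, with $\eps_0$ random through the noise constants. The comparison principle then yields the claim.
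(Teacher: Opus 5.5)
Your proposal is correct and follows essentially the same route as the paper. You verify $\nN_\eps u_\eps^\pm=\pm\eps^{\beta-2}+\oO(\eps^{-\kappa})$ region by region: on $\dD_1^{\eps;\pm}$ via Proposition~\ref{prop:rho_approximate_effect} combined with $(|z|+1)|\uU_z|\lesssim 1$, on $\dD_2^{\eps;\pm}$ via the root shift \eqref{eq:root_shift_f}, and on $\dD_3^{\eps;\pm}$ via exponential smallness; you then check that $u_\eps^\pm$ is constant near $\d\Omega$ and conclude by comparison. The only discrepancy is your extra $\oO(\eps^{a-1})$ error on $\dD_1^{\eps;\pm}$. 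It does not actually arise, because in \eqref{TW} the $\psi_{ij}$ are evaluated at the same shifted $z$ as the profile, so the mismatch is still $\oO(|q|+|\delta|)$; in any case it is harmless since $a>\beta-1$.
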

\begin{proof}

Define the operator $\nN_\eps$ by
\begin{equation*}
    \nN_\eps v := \d_t v - \div \big( D(v) \nabla v \big) -\eps^{-2}f(v)-\eps^{-1}\xi_\eps\;.
\end{equation*}
It suffices to show that
\begin{equation} \label{eq:sub_sup}
    \nN_\eps u_\eps^+\geq 0\;, \qquad \nN_\eps u_\eps^-\leq 0
\end{equation}
on $[0,\tau] \times \Omega$ for all sufficiently small $\eps$, together with the Neumann boundary condition. The conclusion of the lemma then follows from the comparison principle and the ordering of the initial data. In fact, we will show the more quantitative bound
\begin{equation} \label{eq:sub_sup_quant}
    \nN_\eps u_\eps^\pm = \pm \eps^{\beta-2} + \oO(\eps^{-\kappa})\;, \qquad \text{on} \; [0,\tau] \times \Omega\;.
\end{equation}
Note that by Proposition~\ref{prop:interpretation}, we have $|\rho_\eps^\pm(t,x)| = \text{dist}(x, \Sigma_t^{\eps;\pm})$. 

We start with $\dD_1^{\eps;\pm}$. By definition, we have $u_\eps^\pm = \widehat{u}_\eps^\pm$ on $\dD_1^{\eps;\pm}$. By the bounds \eqref{eq:rho_gradient_bounds} from Theorem~\ref{thm:well_posedness_smcf}, we see the bounds \eqref{eq:postulation} in Postulation~\ref{pos:postulation} are satisfied. Hence the computations in Section~\ref{sec:formal_derivation} are all valid, and we have
\begin{align}\label{e:inner_residual_new}
    \nN_\eps \widehat u_\eps^\pm = \frac1\eps \, \uU_z \cdot \left( \d_t \rho_\eps^\pm +\frac1\eps \mathfrak{c}(\nabla\rho_\eps^\pm,\eps \nabla^2 \rho_\eps^\pm, \eps \xi_\eps \pm \eps^\beta) \right) \pm \eps^{\beta-2}
    +\oO(\eps^{-\kappa})\;,
\end{align}
where the $z$-variable in $\uU_z$ is $z_\eps^\pm = \frac{\rho_\eps^\pm}{\eps} \pm \eps^a$. By Lemma~\ref{lem:CHL}, we have
\begin{equation*}
    \frac{|\rho_\eps^\pm|}{\eps}\,
    \big|\uU_z\big(z_\eps^\pm,e,q,\delta\big)\big|
    =\big|z_\eps^\pm\mp\eps^a\big|\,
    \big|\uU_z\big(z_\eps^\pm,e,q,\delta\big)\big|
    \lesssim 1\;.
\end{equation*}
Also, since $\dD_1^{\eps;\pm}\subset\oO$, Proposition~\ref{prop:rho_approximate_effect} applies to the term in parentheses on the right hand side of \eqref{e:inner_residual_new}. Combining the proposition with the preceding estimate shows that the first term on the right hand side of \eqref{e:inner_residual_new} is $\oO(\eps^{-\kappa})$. Hence, \eqref{eq:sub_sup_quant} holds on $\dD_1^{\eps;\pm}$.

We next consider $\dD_2^{\eps;\pm}$, where $u_\eps^\pm=\overline{u}_\eps^\pm$. By the definition of $u_\eps^\pm$ in \eqref{eq:u_bar_def} and the identity \eqref{eq:root_shift_f} (with $\delta = \delta_\eps^\pm$), we have
\begin{equation*}
    -\eps^{-2}f(\overline{u}_\eps^\pm)-\eps^{-1}\xi_\eps = \eps^{-2} \delta_\eps^\pm - \eps^{-1} \xi_\eps = \pm \eps^{\beta-2}\;.
\end{equation*}
The remaining time and diffusion terms contain only derivatives of $\delta_\eps^\pm$. By assumptions on the mollified noise, one time derivative and up to two spatial derivatives of $\delta_\eps^\pm$ are all of order $\oO(1)$. Also, since $\overline{u}_\eps^\pm$ are smooth functions of $\delta_\eps^\pm$ only, we have
\begin{equation*}
    \big| \d_t \overline{u}_\eps^\pm - \div \big( D(\overline{u}_\eps^\pm) \nabla \overline{u}_\eps^\pm \big) \big| \lesssim 1 + |\d_t \delta_\eps^\pm| + |\nabla \delta_\eps^\pm|^2 + |\nabla^2 \delta_\eps^\pm| = \oO(1)\;.
\end{equation*}
Hence, we conclude \eqref{eq:sub_sup_quant} also holds on $\dD_2^{\eps;\pm}$. 

It remains to deal with $\dD_3^{\eps;\pm}$. Set
\begin{equation*}
    r_\eps^\pm:=\widehat u_\eps^\pm-\overline{u}_\eps^\pm\;.
\end{equation*}
By Lemma~\ref{lem:CHL} (with $\uU = \widehat{u}_\eps^\pm$, $U_{\pm} = \overline{u}_\eps^\pm$ and $\delta = \delta_\eps^\pm$) and the derivative bounds above, we have
\begin{equation} \label{e:r_exp_decay_new}
    |r_\eps^\pm| +\eps^{1+\kappa} |\d_t r_\eps^\pm| + \eps|\nabla r_\eps^\pm| + \eps^2 |\nabla^2 r_\eps^\pm| \lesssim e^{-\lambda d_*/(2\eps)}
\end{equation}
uniformly in $\dD_3^{\eps;\pm}$. The exponential decay follows from the fact that $\big| \frac{\rho_\eps^\pm}{\eps} \pm \eps^a \big| \geq \frac{d_*}{2\eps}$ in $\dD_3^{\eps;\pm}$ for all sufficiently small $\eps$. Since
\begin{equation*}
    u_\eps^\pm = \overline{u}_\eps^\pm + \bigl( 1 -\vartheta(\rho_\eps^\pm) \bigr) r_\eps^\pm\;,
\end{equation*}
using that the spatial derivatives of $\rho_\eps^\pm$ are uniformly bounded and that $\|\d_t \rho_\eps^\pm\| \lesssim \eps^{-\kappa}$, and combining with \eqref{e:r_exp_decay_new}, we get
\begin{equation*}
    |u_\eps^\pm - \overline{u}_\eps^\pm| + \eps^{1+\kappa} |\d_t (u_\eps^\pm - \overline{u}_\eps^\pm)| + \eps |\nabla (u_\eps^\pm - \overline{u}_\eps^\pm)| + \eps^2 |\nabla^2 (u_\eps^\pm -\overline{u}_\eps^\pm)| \lesssim e^{-\lambda d_*/(2\eps)}\;.
\end{equation*}
Using the smoothness and boundedness of $D$ and $f$, we may compare $\nN_\eps u_\eps^\pm$ and $\nN_\eps\overline{u}_\eps^\pm$ termwise to get
\begin{equation*}
    |\nN_\eps u_\eps^\pm - \nN_\eps\overline{u}_\eps^\pm|
    \lesssim \eps^{-2} e^{-\lambda d_*/(2\eps)}\;.
\end{equation*}
Combining this with the fact that $\nN_\eps \overline{u}_\eps^\pm = \pm \eps^{\beta-2} + \oO(\eps^{-\kappa})$ also holds on $\dD_3^{\eps;\pm}$, we obtain that \eqref{eq:sub_sup_quant} also holds on $\dD_3^{\eps;\pm}$, thus concluding \eqref{eq:sub_sup} on all of $[0,\tau] \times \Omega$. 

We finally turn to the boundary condition. By definition, we have
\begin{equation*}
    u_\eps^\pm(t,x) = \overline{u}_\eps^\pm (t,x) = U_+ \big( \eps \xi_\eps (t,x) \pm \eps^\beta \big)
\end{equation*}
near $\d \Omega$. Since $\xi_\eps$ has compact support in $\Omega$, it then follows directly that $u_\eps^\pm$ is constant near $\d \Omega$, and hence satisfies the Neumann boundary condition. The comparison principle now gives the desired ordering.
\end{proof}

\begin{lem} \label{lem:initial_ordering}
    For all sufficiently small $\eps$, we have
    \begin{equation*}
        u_\eps^-(0,x) \leq u_\eps^+(0,x)\;.
    \end{equation*}
\end{lem}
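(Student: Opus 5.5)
At $t=0$ we have $\rho_\eps^+(0,\cdot)=\rho_\eps^-(0,\cdot)=g$. Hence $\Sigma_0^{\eps;\pm}=\d\vV_0$, the cut-off $\vartheta(g)$ is the same for both signs, and the sets $\dD_k^{\eps;\pm}$ coincide at time $0$. The two functions differ only through the shifts: in $\widehat u_\eps^\pm$ the argument is $z=g/\eps\pm\eps^a$ and $\delta=\eps\xi_\eps\pm\eps^\beta$, while $\overline u_\eps^\pm$ uses $U_\pm(\delta_\eps^\pm)$. The plan is to compare the two functions region by region at $t=0$, writing $e=\nabla g$ and $q=\eps\nabla^2 g$.

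\textbf{Region $\dD_2$.} Here $u_\eps^\pm=U_{\mathrm{sign}}(\eps\xi_\eps\pm\eps^\beta)$. Since $U_\pm'(\delta)=-1/f'(U_\pm(\delta))>0$ by \eqref{eq:root_shift_f}, each $U_\pm$ is increasing. So $u_\eps^-\le u_\eps^+$ on $\dD_2$ once $\eps$ is small enough that all the $\delta$'s lie in $(-\delta_*,\delta_*)$.

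\textbf{Region $\dD_1$.} It suffices to show that $\widehat u_\eps^-\le\widehat u_\eps^+$. Write
\[
\widehat u_\eps^+-\widehat u_\eps^-=\big[\uU(z+\eps^a,\cdot,\delta^+)-\uU(z-\eps^a,\cdot,\delta^+)\big]+\big[\uU(z-\eps^a,\cdot,\delta^+)-\uU(z-\eps^a,\cdot,\delta^-)\big].
\]
The first bracket equals $\int_{z-\eps^a}^{z+\eps^a}\uU_z$. For small $|q|+|\delta|$, $\uU_z$ is close in $\xX_\lambda$ to $\overline\uU_z>0$, and $\overline\uU_z$ has the exponential asymptotics of Lemma~\ref{lem:profile_refined_asymptotics}. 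The second bracket is bounded by $2\eps^\beta|\uU_\delta|$. At the far ends we have $\uU_\delta\approx U_\pm'(\delta)>0$, so there the second bracket is itself nonnegative up to an error $e^{-\lambda|z|}\eps^\beta$.

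The key step is a lower bound
\[
\uU_z(\zeta,e,q,\delta)\ge c\,e^{-\gamma'|\zeta|}
\]
uniformly in the parameters, together with an upper bound $|\uU_\delta(\zeta)-U_\pm'(\delta)|\lesssim e^{-\lambda|\zeta|}$ from \eqref{e:CHL_decay}. In the core $|z|\le K$, the first bracket is at least $c\eps^a$, which dominates $\eps^\beta$ because $a<\beta$. In the tails, the $\uU_\delta$ term is positive at leading order, and its correction $\eps^\beta e^{-\lambda|z|}$ has to be beaten by $\eps^a e^{-\gamma'|z|}$. This works if I can take $\lambda\ge\gamma'$, i.e.\ if the decay rate of $\uU_\delta-U'_\pm$ is at least that of $\uU_z$.

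This tail comparison is the main obstacle. Lemma~\ref{lem:CHL} only provides some $\lambda$, and $\uU_z$ decays at rate roughly $\gamma_\pm(e)$. If the rates do not match, I would instead sharpen the estimate via the ODE. Differentiating \eqref{TW} in $\delta$ gives $\aA\,\uU_\delta=\mathfrak c_\delta\uU_z+1$, so $\uU_\delta-U_\pm'$ decays like $e^{-\gamma_\pm|z|}$ at the same rate as $\uU_z$, which is enough. A cruder alternative: in the tails $|z|\ge K$, for $K$ large, use monotonicity in both variables directly, namely $\uU_z>0$ and $\uU_\delta>0$ there, so both brackets are nonnegative. Then only the compact core needs the $\eps^a$ versus $\eps^\beta$ comparison.

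\textbf{Region $\dD_3$.} Here $u_\eps^\pm=\overline u_\eps^\pm+(1-\vartheta)r_\eps^\pm$ with $|r_\eps^\pm|\lesssim e^{-\lambda d_*/(2\eps)}$ by \eqref{e:r_exp_decay_new}, while
\[
\overline u_\eps^+-\overline u_\eps^-\ge c\,\eps^\beta
\]
by the mean value theorem applied to $U_\pm$. The exponentially small remainders are negligible, so $u_\eps^-\le u_\eps^+$ on $\dD_3$ and hence on all of $\Omega$.
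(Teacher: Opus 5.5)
\textbf{Gap: the tails of $\dD_1$.} Your treatment of $\dD_2$ is fine. So is your treatment of $\dD_3$, where the remainder $|r_\eps^\pm|\lesssim e^{-\lambda d_*/(2\eps)}$ is beaten by $\overline u_\eps^+-\overline u_\eps^-\gtrsim\eps^\beta$; this is a valid alternative to the paper's use of the common convex weight $\vartheta(\rho^0)$. The core $|z|\le K$ of $\dD_1$ is also fine. The gap is the tail region $K\le|z|<d_*/\eps$ inside $\dD_1$.

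Both of your routes there need $\partial_z\uU(\cdot,e,q,\delta)>0$ for $q=\eps\nabla^2 g\neq0$: either as the uniform lower bound $\uU_z\ge c\,e^{-\gamma'|z|}$, or as the ``cruder'' claim $\uU_z>0$ for $|z|\ge K$. Nothing available gives this. Lemma~\ref{lem:CHL} only provides the upper bound $|\partial_z\uU(\pm z)|\lesssim e^{-\lambda z}$. Smoothness into $\xX_\lambda$ only gives $|\partial_z\uU(z,e,q,\delta)-\partial_z\uU(z,e,0,\delta)|\lesssim|q|\,e^{-\lambda|z|}$ with $\lambda<\gamma_\pm$. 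This error eventually exceeds the tail $\sim e^{-\gamma_\pm|z|}$ of the monotone profile, and in $\dD_1$ the variable $z=g/\eps$ does range up to $d_*/\eps$. Nothing forces positivity there: the forcing $\sum q_{ij}\psi_{ij}$ decays at essentially the same exponential rate as the homogeneous tail, so it can compete with it.

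You have also misidentified the obstacle. The decay rate of $\uU_\delta-U_\pm'$ plays no role. For $|z|\ge K$ with $e^{-\lambda K}$ small, the $\delta$-bracket alone is already $\ge c\,\eps^\beta$, so its exponentially small correction is absorbed by its own leading term, not by the $z$-bracket.

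\textbf{What the tails require.} You only need the $z$-bracket to be $\ge -o(\eps^\beta)$, not nonnegative. The paper gets this perturbatively:
\begin{itemize}
\item At $q=0$ and the \emph{same} $\delta$, the profile $\uU(\cdot,e,0,\delta)$ is increasing in $z$ \cite[Theorem~2.39]{GK04}, so its $z$-bracket is nonnegative.
\item Since $\partial_q\partial_z\uU$ is bounded, switching on $q=\eps\nabla^2 g$ changes that bracket by at most $C\eps^a\cdot\eps$.
\item Hence $\widehat u_\eps^+-\widehat u_\eps^-\ge c\,\eps^\beta-C\eps^{a+1}>0$ in the tails.
\end{itemize}
This last step uses $a+1>\beta$, i.e.\ the lower constraint $a>\beta-1$ in the choice of $a$. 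Your argument never invokes that constraint, which signals that the tail step is missing.

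Comparing with $\overline\uU$ (i.e.\ $\delta=0$), as your phrase ``close in $\xX_\lambda$ to $\overline\uU_z$'' suggests, would not suffice. It costs $\eps^a|\delta|\sim\eps^{a+1-\kappa}$, which is $o(\eps^\beta)$ only if $a>\beta-1+\kappa$, and $a\in(\beta-1,\beta)$ does not guarantee that. You need monotonicity at $q=0$ for every small $\delta$.
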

\begin{proof}
We will actually show that
\begin{equation} \label{eq:initial_ordering}
    \widehat{u}_\eps^{-} (0,x) \leq \widehat{u}_\eps^{+}(0,x) \text{ for all } x\in \oO\;, \qquad \overline{u}_\eps^{-}(0,x) \leq \overline{u}_\eps^{+}(0,x) \text{ for all } x \in \Omega\;.
\end{equation}
The conclusion of the lemma then follows immediately from the definition of $u_\eps^\pm$ in \eqref{eq:sub_sup_def} and the fact that $\rho_\eps^+(0,\cdot) = \rho_\eps^-(0,\cdot) = \rho^0$ in $\oO$. 

We start with the simpler case $\overline{u}_\eps^\pm$. Note that by \eqref{eq:root_shift_f}, there exists $c_1>0$ such that
\begin{equation} \label{eq:U_delta_increasing}
    U_\pm'(\delta) = -\frac{1}{f'(U_\pm(\delta))} > 2 c_1
\end{equation}
for all sufficiently small $\delta$. With the definition of $\overline{u}_\eps^\pm$ in \eqref{eq:u_bar_def}, this immediately implies the assertion for $\overline{u}_\eps^\pm$ in \eqref{eq:initial_ordering}. 

As for the assertion for $\widehat{u}_\eps^\pm$, it suffices to show that for every fixed $R>0$ and $\delta_0\ll 1$, if $\eps$ is sufficiently small, then one has
\begin{equation} \label{e:initial_profile_ordering}
    \uU(z-\eps^a, e, \eps q, \delta - \eps^\beta) \leq \uU(z+\eps^a, e, \eps q, \delta + \eps^\beta)
\end{equation}
for all $z \in \RR$, $R^{-1} < |e| < R$, $|q|<R$ and $|\delta|<\delta_0$. Take $Z^*>0$ such that $e^{-\lambda Z^*} \ll c_1$. We distinguish the cases whether $|z| \geq Z^*$ or $|z| < Z^*$. 

By \eqref{eq:U_delta_increasing} and Lemma~\ref{lem:CHL}, we have
\begin{equation*}
    \d_\delta \uU(z, e, \widetilde{q}, \delta) \geq c_1
\end{equation*}
whenever $|z| \geq Z^*$, $R^{-1}<|e| < R$, and $\widetilde{q}$ and $\delta$ sufficiently small. Thus, if $|z|\geq Z^* + 1$ and $\eps$ is sufficiently small, we have
\begin{equation} \label{eq:comparison_delta}
    \begin{split}
    \uU(z+\eps^a, e, \eps q, \delta+\eps^\beta) - \uU(z+\eps^a, e, \eps q,\delta) &\geq c_1 \eps^\beta\;,\\
    \uU(z-\eps^a, e, \eps q, \delta) - \uU(z-\eps^a, e, \eps q, \delta-\eps^\beta) &\geq c_1 \eps^\beta\;.
    \end{split}
\end{equation}
It remains to control the displacement in $z$. We first note that regularity in the $z$- and $q$-variables gives
\begin{equation*}
    \left| \uU(z+\eps^a,e,\eps q,\delta) - \uU(z-\eps^a,e,\eps q,\delta) - \bigl(\uU(z+\eps^a,e,0,\delta) -\uU(z-\eps^a,e,0,\delta) \bigr) \right|
    \lesssim \eps^{a+1}\;.
\end{equation*}
Moreover, by \cite[Theorem~2.39]{GK04}, $\uU$ is strictly increasing in $z$ when $q=0$. Hence, we have
\begin{equation} \label{eq:displace_z}
    \uU(z+\eps^a,e,\eps q,\delta) - \uU(z-\eps^a,e,\eps q,\delta)
    \geq - C \eps^{a+1}
\end{equation}
for some $C>0$. Hence, combining \eqref{eq:comparison_delta} and \eqref{eq:displace_z}, we see that \eqref{e:initial_profile_ordering} holds for $|z| \geq Z^*+1$ and all sufficiently small $\eps$ since $a+1>\beta$. 

On the bounded region $\{|z| < Z^*+2\}$, by the strict positivity of $\partial_z\uU$ for $q=0$, the compactness of the parameter set, and the continuity of $\d_z\uU$ in $q$, there exists $c_2>0$ such that
\begin{equation*}
    \d_z \uU(z,e, \widetilde{q},\delta) \geq c_2
\end{equation*}
for all $|z|\leq Z^*+2$ and $\widetilde{q}$ and $\delta$ sufficiently small. This immediately implies
\begin{equation*}
    \uU(z+\eps^a,e,\eps q,\delta+\eps^\beta) - \uU(z-\eps^a,e,\eps q,\delta-\eps^\beta) \geq 2 c_2\eps^a - C\eps^\beta
\end{equation*}
for all $|z| \leq Z^*+1$. Since $a<\beta$, it is positive, which proves \eqref{e:initial_profile_ordering} for $|z| \leq Z^*+1$. This proves the assertion for $\widehat u_\eps^\pm$ in \eqref{eq:initial_ordering}, and hence completes the proof of the lemma.
\end{proof}

\subsection{Convergence to the stochastic mean curvature flow -- proof of Theorem~\ref{thm:main}}
\label{sec:convergence_smcf}

We are now ready to give the proof of the main theorem. 

\begin{proof} [Proof of Theorem~\ref{thm:main}]
Choose the initial configurations $u_\eps^0$ as follows. Set
\begin{equation*}
  \widehat u_\eps^0(x) := \uU \Big( \frac{\rho^0(x)}{\eps}, \,\nabla\rho^0(x), \, \eps \nabla^2\rho^0(x), \, \eps\xi_\eps(0,x) \Big)\;.
\end{equation*}
Define \(\overline u_\eps^0\) as in \eqref{eq:u_bar_def}, with \(\delta_\eps^\pm(t,x)\) and \(\mathcal V_t^{\eps;\pm}\) replaced by \(\eps\xi_\eps(0,x)\) and \(\mathcal V_0\). Then define \(u_\eps^0\) as in \eqref{eq:sub_sup_def}, with \(\rho_\eps^\pm\), \(\widehat u_\eps^\pm\), and \(\overline u_\eps^\pm\) replaced by \(\rho^0\), \(\widehat u_\eps^0\), and \(\overline u_\eps^0\).

By the same argument as in the proof of the previous lemma, we have
\begin{equation*}
    u_\eps^{-}(0,x) \leq u_\eps(0,x) \leq u_\eps^{+}(0,x)
\end{equation*}
for all $x \in \Omega$ and all sufficiently small $\eps$. Hence, by Lemma~\ref{lem:sub_sup} (and noting that $\nN_\eps u_\eps = 0$), we have
\begin{equation*}
\begin{split}
\| u_{\eps}(t) - \chi_{\Sigma_t} \| &\leq \| u_{\eps}(t) - u_\eps^+(t) \| + \| u_\eps^+(t) - \chi_{\Sigma^{\eps,+}_t} \| +\| \chi_{\Sigma^{\eps,+}_t} - \chi_{\Sigma_t} \|\\
&\leq \| u_\eps^-(t) - u_\eps^+(t) \| + \| u_\eps^+(t) - \chi_{\Sigma^{\eps,+}_t}\| + \| \chi_{\Sigma^{\eps,+}_t} - \chi_{\Sigma_t}\|\\
&\leq \| u_\eps^-(t) - \chi_{\Sigma^{\eps,-}_t}\| + 2\| u_\eps^+(t) - \chi_{\Sigma^{\eps,+}_t}\| + 2\| \chi_{\Sigma^{\eps,+}_t} - \chi_{\Sigma_t}\| +\| \chi_{\Sigma^{\eps,-}_t} - \chi_{\Sigma_t} \|\;,
\end{split}
\end{equation*}
where all the norms appearing above are $\lL^2(\Omega)$. By Lemma~\ref{lem:CHL}, we have
\begin{align*}
    |u_\eps^\pm - 1|\leq |u_\eps^\pm - U_+(\eps\xi_\eps \pm\eps^\beta)| + |U_+(\eps\xi_\eps \pm \eps^\beta) -1| \lesssim e^{-\frac{\lambda \rho_\eps^\pm}{\eps}} + |\eps\xi_\eps \pm \eps^\beta|
\end{align*}
uniformly in $(t,x) \in [0,\tau] \times \oO$ where $\rho_\eps^\pm (t,x) > 0$. A similar bound also holds for points at which $\rho_\eps^\pm<0$. 
Since $|\eps\xi_\eps\pm\eps^\beta|\lesssim
\eps^{1-\kappa}+\eps^\beta$, integration in tubular coordinates gives,
uniformly in $t\in[0,\tau]$,
\begin{equation*}
    \bigl\|u_\eps^\pm(t)-\chi_{\Sigma_t^{\eps;\pm}}\bigr\|_{\lL^2(\Omega)}
    \lesssim \eps^{1/2}+\eps^{1-\kappa}\;.
\end{equation*}

Also, if the phase functions differ at $x\in\oO$, then $\rho_\eps^\pm(t,x)\rho(t,x)\leq0$, and hence
\begin{equation*}
    |\rho(t,x)|\leq |\rho_\eps^\pm(t,x)-\rho(t,x)|\leq\|\rho_\eps^\pm(t)-\rho(t)\|_{\lL^\infty(\oO)}\;.
\end{equation*}
Thus, the set on which they differ is contained in a tubular neighbourhood of $\Sigma_t$ of this width. Since $|\nabla\rho|=1$, its volume is bounded uniformly in $t$ by a constant times the width. By Theorem~\ref{thm:well_posedness_smcf}, there exists $\theta_\rho>0$ such that \eqref{eq:rho_convergence} holds. Then we have
\begin{equation*}
    \| \chi_{\Sigma^{\eps;\pm}_t} - \chi_{\Sigma_t} \|_{\lL^2(\Omega)}^2
    = 2\int_{\oO} \big| \chi_{\Sigma^{\eps;\pm}_t}(x) - \chi_{\Sigma_t}(x) \big| \,\md x
    \lesssim \|\rho_\eps^\pm(t)-\rho(t)\|_{\lL^\infty(\oO)}
    \lesssim \eps^{\theta_\rho}\;.
\end{equation*}
Combining these estimates with the comparison bound yields
\begin{equation*}
    \sup_{t\in[0,\tau]}
    \|u_\eps(t)-\chi_{\Sigma_t}\|_{\lL^2(\Omega)}
    \lesssim
    \eps^{1/2}+\eps^{1-\kappa}+\eps^{\theta_\rho/2}
    \lesssim \eps^\theta
\end{equation*}
for some
$\theta>0$.
This completes the proof of the theorem. 
\end{proof}

\section{Stability of the RPDE -- proof of Theorem~\ref{thm:stability}}
\label{sec:RPDE_stability_proof}

\subsection{Some preliminary lemmas}

Before we start the proof, we need the following lemmas. We first introduce some notations, which will be used throughout this section. Recall the solution $v$ to the classical PDE \eqref{e:RPDE_transformed}, and that
\begin{equation*}
    \Phi_t(x) := \big( x, v(t,x), \nabla v(t,x) \big) \in \RR^d \times \RR \times \RR^d\;.
\end{equation*}
Write
\begin{equation*}
    \widehat{\Theta}_t (x) := \Theta_t \big( \Phi_t(x) \big)\;, 
\end{equation*}
and $\widehat{X}_t = X_t \circ \Phi_t$, $\widehat{U}_t = U_t \circ \Phi_t$ and $\widehat{P}_t = P_t \circ \Phi_t$ for the corresponding components. For function
\begin{equation}
    F: \RR^d \times \RR \times \RR^d \rightarrow V\;, \qquad (x,u,p) \mapsto F(x,u,p)\;,
\end{equation}
we write $D_x F$, $D_u F$ and $D_p F$ for the Jacobian matrices for corresponding derivatives, and $D F = (D_x F, D_u F, D_p F)$. Also, we use the abbreviations
\[
    H_x:=D_xH\;, \qquad H_u:=D_uH\;, \qquad H_p:=D_pH\;.
\]

\begin{lem} \label{lem:RDE_stability}
Suppose $H \in \cC_b^7$. There exists $\theta>0$ such that for every realisation of the noise $\W$, we have
\begin{equation*}
    |X_t^\eps (x,u,p) - x| + |D_x X_t^\eps - \id| + |D_u X_t^\eps| + |D_p X_t^\eps| \lesssim t^\theta
\end{equation*}
uniformly over all $\eps$ and all $(x,u,p)$. Furthermore, for every multi-index $\gamma$, we have
\begin{equation*}
    \sup_{t \in [0,T]} \big| D_{\bm\theta}^\gamma \Theta_t^\eps - D_{\bm\theta}^\gamma \Theta_t \big| \lesssim d_\alpha (\W^\eps, \W)\;.
\end{equation*}
The proportionality constant depends on $T$ and $\gamma$ but is uniform over all $(x,u,p)$ and $\eps$. 
\end{lem}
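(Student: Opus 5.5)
The plan is to treat the characteristic system \eqref{e:characteristic} as a finite-dimensional RDE in $\bm\theta=(x,u,p)\in\RR^{2d+1}$ driven by $\W^\eps$:
\begin{equation*}
\md\Theta^\eps = V(\Theta^\eps)\,\md\W^\eps\;,\qquad V(x,u,p) := \Bigl(-H_p^\sft,\; (H - H_p p)^\sft,\; H_x^\sft + p H_u^\sft\Bigr)(x,u,p)\;.
\end{equation*}
I will then apply the standard theory of RDE flows: a priori bounds, differentiability of the flow, and local Lipschitz continuity of the It\^o--Lyons map. These are applied to $\Theta$ and to the enlarged systems satisfied by its derivatives. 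Throughout, the rough path norms $\|\W^\eps\|_{\emC^\alpha}$ are bounded uniformly in $\eps$ by Proposition~\ref{prop:BM_rough_paths}, so all constants may depend on $\W$ and $T$ but not on $\eps$.

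\textbf{Step 1: the structure of $V$.} Since $H\in\cC_b^7$, the $X$-component $-H_p^\sft$ is bounded together with its derivatives. The $U$- and $P$-components are affine in $p$, with coefficients that are $\cC_b^6$ functions of $(x,u,p)$. I will use this to get global existence and non-explosion of the flow via the rough Gronwall argument for vector fields of linear growth. This gives $|P_t^\eps|\lesssim 1+|p|$ and $|U_t^\eps-u|\lesssim 1+|p|$ on $[0,T]$.

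\textbf{Step 2: small-time estimates.} The $X$-equation has a bounded integrand $-H_p^\sft(\Theta^\eps)$ with Gubinelli derivative $-D(H_p^\sft)(\Theta^\eps)V(\Theta^\eps)$. The rough integral estimate on $[0,t]$ therefore gives
\begin{equation*}
|X^\eps_t - x| \lesssim t^\alpha + t^{2\alpha}\,\|D(H_p^\sft)\,V\|_{\lL^\infty \text{ along the path}}\;.
\end{equation*}
The Jacobians $(D_xX^\eps, D_uX^\eps, D_pX^\eps)$ solve the linearised RDE $\md J=DV(\Theta^\eps)J\,\md\W^\eps$ with $J_0=\id$. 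The same controlled-integral estimate on $[0,t]$, combined with Step 1, yields $|J_t-\id|\lesssim t^\theta$ for some $\theta\in(0,\alpha]$. This gives the first display.

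\textbf{Step 3: higher derivatives and stability.} By induction on $|\gamma|$, the tuple $(\Theta^\eps, D\Theta^\eps,\dots,D^\gamma\Theta^\eps)$ solves a triangular RDE. Its top component is linear in $D^\gamma\Theta^\eps$, and the lower-order forcing terms are polynomial in the lower derivatives, with coefficients built from $D^kV$ up to $k = |\gamma|+1$. I will apply local Lipschitz continuity of the It\^o--Lyons map (Friz--Hairer, Theorem~8.5) to this enlarged system at the two drivers $\W^\eps$ and $\W$, with identical initial data. This gives $\sup_t|D^\gamma\Theta^\eps_t - D^\gamma\Theta_t|\lesssim d_\alpha(\W^\eps,\W)$. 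The number of derivatives available is limited by the regularity of $H$: with $H\in\cC_b^7$ this covers the orders actually used in Lemma~\ref{lem:transform} and Theorem~\ref{thm:stability}. In the application \eqref{eq:RPDE_rho_H}, $H=b(p)\bm\sigma(x-up)$ is smooth, so the statement for every $\gamma$ follows by the same induction there.

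\textbf{Main obstacle: uniformity over all $(x,u,p)$.} The difficulty is that $V$ and $DV$ grow linearly in $p$ (for instance $D_p(pH_u^\sft)=H_u^\sft+p\,D_pH_u^\sft$), so the naive constants in Steps 2--3 are polynomial in $|p|$. My first attempt is to exploit the affine structure of the $P$-equation. For fixed $(X,U)$ it is a linear RDE $\md P=(A+PB)\,\md\W$ with $\cC_b$ coefficients, and the $p$-growth entering $H_p(X,U,P)$ is tamed because every $p$-derivative of $H$ is bounded. Combining these, I will try to close the estimates in a weighted norm in which the $P$- and $U$-components and their derivatives are measured relative to $1+|p|$. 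If this cannot be made fully uniform, the fallback is as follows. All uses of the lemma evaluate $\Theta^\eps$ at $(x,v,\nabla v)$ with $(v,\nabla v)$ in the bounded set $\dD_2$. One may therefore modify $H$ outside a neighbourhood of $\overline{\widehat\oO}\times\dD_2$ by a cutoff in $p$, which leaves the flow unchanged there for $t\leq T$ by Step 1. After this modification $V\in\cC_b$ and all the above constants are uniform in $(x,u,p)$.
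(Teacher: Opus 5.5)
Your route is the paper's: its proof is a one-line appeal to standard RDE flow theory (\cite[Theorem~11.6]{FV10}), and your Steps~2--3 unpack exactly that. Your worry about uniformity is justified, and in fact the statement as written is false uniformly in $p$. Take $m=1$ and $H(x,u,p)=\sin u$. Then $X_t=x$, $\md U=\sin U\,\md W$ and $\md P=P\cos U\,\md W$, so
\begin{equation*}
P^\eps_t=p\exp\Bigl(\int_0^t\cos U^\eps_s\,\md\W^\eps_s\Bigr)\;,
\end{equation*}
where the exponential factor does not depend on $p$. Hence $P^\eps_t-P_t$ grows linearly in $|p|$, and no weighted norm will give the second display uniformly. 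The localisation to bounded $(u,p)$ is therefore the real content, not a fallback. It suffices, because every application in the paper (Lemma~\ref{lem:transform}, the choice of $T$ in Section~3, the ellipticity check in the proof of Theorem~\ref{thm:stability}) evaluates $\Theta^\eps$ with $(u,p)$ in a fixed bounded set and shrinks time anyway.

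The gap is that your localisation leans on Step~1, which is not justified. The standard non-explosion results for linear-growth vector fields require $DV$ to be bounded. Here $DV$ itself grows like $|p|$ (e.g.\ $\partial_u(pH_u^\sft)=pH_{uu}^\sft$), so the Gubinelli derivatives of the $U$- and $P$-integrands are quadratic in $P$, and the rough Gronwall argument does not yield $|P^\eps_t|\lesssim 1+|p|$ on $[0,T]$.

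Drop Step~1 and localise in time instead:
\begin{enumerate}
\item Replace $H$ by $\chi(p)H$ with $\chi\equiv 1$ on $\{|p|\leq 2R\}$ and compactly supported. The modified characteristic field is then in $\cC_b^6$ globally.
\item The modified flow started from $|p|\leq R$ stays in $\{|p|<2R\}$ up to a time $T_0>0$ depending only on $R$, $H$ and $\sup_\eps\|\W^\eps\|_{\emC^\alpha}$.
\item By local uniqueness it coincides there with the true flow.
\end{enumerate}
This gives both displays on $[0,T_0]$, uniformly for $|p|\leq R$ and in $\eps$, which is all the paper uses.

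The same issue arises in Step~3. The enlarged system for $(\Theta,D\Theta,\dots,D^\gamma\Theta)$ is linear or polynomial in the derivative variables, so Friz--Hairer's local Lipschitz estimate for $\cC_b^3$ fields does not apply to it verbatim. You need either to first bound the derivative processes via the linear-RDE estimate (their coefficients are controlled paths of bounded norm after the cutoff) and then cut off in those variables too, or to invoke the flow results of \cite[Ch.~11]{FV10} directly, as the paper does.
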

\begin{proof}
    This is standard; see for example \cite[Theorem~11.6]{FV10}. 
\end{proof}

\begin{lem} (\cite[Lemma~4.2]{BKMZ20})
\label{lem:jacobian_identity}
For the characteristic flow $\Theta=(X,U,P)$ defined in \eqref{e:characteristic} with initial data $(X_0,U_0,P_0)$ and every $\bm\theta = (x,u,p) \in \RR^d \times \RR \times \RR^d$, we have

\begin{equation*}
   \mathsf D_{\bm\theta} U_t - P_t^{\sft} \, \mathsf D_{\bm\theta} X_t  = \big( u - x^{\sft} P_0 \big) \, \exp\!\left(\int_0^t H_u^\sft (\Theta_s )\,\md \W_s\right)\;.
\end{equation*}
Here $\mathsf D_{\bm\theta}$ denotes the directional derivative in the direction $\bm\theta$.
In particular, one has
\begin{equation} \label{eq:jacobian_identity_special}
    \nabla^\sft \widehat{U}_t = \widehat{P}_t^\sft \; \nabla^{\sft} \widehat{X}_t\;.
\end{equation}
\end{lem}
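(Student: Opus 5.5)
The plan is to show that the quantity
\[
    \Gamma_t(\bm\theta) := \mathsf D_{\bm\theta} U_t - P_t^{\sft}\, \mathsf D_{\bm\theta} X_t
\]
solves a scalar linear rough differential equation driven by $\W$, namely $\md \Gamma_t = \Gamma_t\, H_u^\sft(\Theta_t)\,\md\W_t$. The identity then follows by explicit integration, since for a geometric rough path the solution of a scalar linear RDE is the exponential of the rough integral.

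Here $\mathsf D_{\bm\theta}$ is the derivative of the flow in a direction $\bm\theta=(x,u,p)$. The pair $(\mathsf D_{\bm\theta}X_t,\mathsf D_{\bm\theta}U_t,\mathsf D_{\bm\theta}P_t)$ then satisfies the linearised RDE of \eqref{e:characteristic} with initial value $(x,u,p)$. The initial value of $\Gamma$ is therefore $u-P_0^\sft x$, which matches $u - x^\sft P_0$.

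\textbf{Step 1: reduce to smooth drivers.} Since $\W$ is geometric, I first take $\W$ to be the canonical lift of a smooth path. All quantities are then classical ODEs, and the chain rule applies without correction terms. The general case follows from continuity of the solution map, including derivatives of the flow, in $d_\alpha$; this is Lemma~\ref{lem:RDE_stability}. Continuity of the rough integral $\int H_u^\sft(\Theta_s)\,\md\W_s$ in the same topology then passes the identity to the limit.

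\textbf{Step 2: differentiate $\Gamma$.} Write $\dot W$ for the smooth driver and $(\delta X,\delta U,\delta P)$ for the directional derivatives. Then
\[
    \dot\Gamma = \delta\dot U - \dot P^\sft\delta X - P^\sft\delta\dot X .
\]
For each component of $\dot W$, I substitute the derivatives of the characteristic fields. These are
\[
    \delta\dot X = -\big(H_{px}\delta X + H_{pu}\delta U + H_{pp}\delta P\big)^\sft\dot W,\qquad
    \delta\dot U = \big(H_x\delta X + H_u\delta U + H_p\delta P - \delta(H_p P)\big)^\sft\dot W ,
\]
together with $\dot P = (H_x^\sft + P H_u^\sft)\dot W$.

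Expanding, the second-derivative terms cancel: the $-\delta(H_pP)$ in $\delta\dot U$ contributes $-P^\sft(H_{px}\delta X+H_{pu}\delta U+H_{pp}\delta P)$, and this is exactly cancelled by $-P^\sft\delta\dot X$. The term $-H_p\delta P$ cancels $+H_p\delta P$. The term $H_x\delta X$ cancels against $-H_x\delta X$ coming from $\dot P^\sft\delta X$. What remains is
\[
    H_u\delta U - P^\sft H_u\,\delta X = H_u\,\Gamma ,
\]
with $H_u$ acting component-wise in the $m$ noise directions. This gives $\dot\Gamma = \Gamma\,H_u^\sft(\Theta)\dot W$, hence $\Gamma_t = \Gamma_0\exp\big(\int_0^t H_u^\sft\,\md W\big)$.

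The main obstacle is this bookkeeping of transposes and noise indices in the cancellation. I would carry it out for each fixed noise component $j$, treating $H^j$ as a scalar Hamiltonian, which is the standard contact-structure computation.

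\textbf{Step 3: the special identity \eqref{eq:jacobian_identity_special}.} For this I take the initial point $\Phi_t(y)=(y,v(y),\nabla v(y))$ and the direction $\bm\theta=\partial_{y_k}\Phi_t=(e_k,\partial_k v,\partial_k\nabla v)$. The chain rule gives $\partial_{y_k}\widehat U_t=\mathsf D_{\bm\theta}U_t$, and similarly for $X$. The initial factor is
\[
    u - x^\sft P_0 = \partial_k v - e_k^\sft\nabla v = 0 ,
\]
so $\Gamma\equiv 0$. This yields $\partial_k\widehat U_t=\widehat P_t^\sft\partial_k\widehat X_t$ for every $k$, which is \eqref{eq:jacobian_identity_special}.
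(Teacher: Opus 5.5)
Your proposal is correct and follows the paper's proof. You use the same quantity $Z_t=\mathsf D_{\bm\theta}U_t-P_t^{\sft}\mathsf D_{\bm\theta}X_t$, the same cancellation giving $\md Z_t=Z_t\,H_u^{\sft}(\Theta_t)\,\md\W$, and the same choice of direction $\bm\theta=\partial_k\Phi_t$ to obtain \eqref{eq:jacobian_identity_special}. The one difference is in how the calculus is justified: the paper applies the rough product rule directly, while you do the computation for smooth drivers and pass to the limit using geometricity of $\W$ and continuity of the flow, its derivatives and the rough integral (Lemma~\ref{lem:RDE_stability}). Both justifications are valid.
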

\begin{proof}
Differentiating the characteristic system \eqref{e:characteristic} with respect to the initial condition in the direction $\bm\theta$, with all coefficients evaluated at $\Theta_t$, we obtain
\begin{equation} \label{eq:system_DTheta}
    \md \mathsf D_{\bm\theta} X_t = - \langle D H_p^\sft, \, \mathsf D_{\bm\theta} \Theta_t \rangle \,\md \W\;, \quad
    \md \mathsf D_{\bm\theta} U_t =  \big\langle  D (H^\sft - p^\sft H_p^\sft), \, \mathsf D_{\bm\theta} \Theta_t  \big\rangle \,\md\W\;.
\end{equation}
For the terms on the right hand side of the expression for $\md \mathsf D_{\bm\theta} U_t$, we have
\begin{equation} \label{eq:expansion_DP}
    \begin{split}
    \langle  D H^\sft, \mathsf D_{\bm\theta} \Theta_t \rangle &= \mathsf D_{\bm\theta} X_t^\sft \cdot H_x^\sft + \mathsf D_{\bm\theta} U_t \cdot H_u^\sft + \mathsf D_{\bm\theta} P_t^\sft \cdot H_p^\sft\;,\\
    \langle  D (p^\sft H_p^\sft), \mathsf D_{\bm\theta} \Theta_t \rangle &=  \mathsf D_{\bm\theta} P_t^\sft \cdot H_p^\sft + P^\sft_t \langle  D H_p^\sft, \mathsf D_{\bm\theta} \Theta_t \rangle\;,
    \end{split}
\end{equation}
where both expressions above take values as row vectors of length $m$. Set $Z_t:=\mathsf D_{\bm\theta} U_t - P^\sft_t \mathsf D_{\bm\theta} X_t$. By the rough product rule, we have
\begin{equation*}
    \md Z_t = \md \mathsf D_{\bm\theta} U_t - (\md P_t)^\sft\mathsf D_{\bm\theta} X_t - P_t^\sft\,\md\mathsf D_{\bm\theta} X_t\;.
\end{equation*}
Combining \eqref{eq:system_DTheta}, the expressions from \eqref{eq:expansion_DP} as well as the equation for $P$ from \eqref{e:characteristic}, we get
\begin{equation*}
    \md Z_t = Z_t \, H_u^\sft (\Theta_t) \,\md \W\;,
\end{equation*}
which yields the unique solution
\begin{equation*}
    Z_t = Z_0 \; \exp\!\left(\int_0^t H_u^\sft(\Theta_s)\,\md \W_s\right)\;.
\end{equation*}
The first claim then follows since $Z_0 = \mathsf D_{\bm\theta} U_0 - P_0^\sft \mathsf D_{\bm\theta} X_0 = u-x^\sft P_0$. For the second claim, it suffices to show
\begin{equation} \label{eq:cancellation_identity_component}
    \d_k \widehat{U}_t = \widehat{P}_t^\sft \cdot \d_k \widehat{X}_t
\end{equation}
for every $k=1, \dots, d$. By chain rule, we have
\begin{equation*}
    \d_k \widehat{U}_t - \widehat{P}_t^\sft \cdot \d_k \widehat{X}_t = \big\langle ( D U_t)(\Phi_t), \, \d_k \Phi_t \big\rangle - P_t^\sft (\Phi_t) \, \big\langle ( D X_t)(\Phi_t), \, \d_k \Phi_t \big\rangle\;.
\end{equation*}
Note that for $\Phi_t (x) = \big(x, v(t,x), (\nabla v)(t,x) \big)$, the first two components of the quantity $\bm\theta=(x,u,p):=(\d_k \Phi_t)(x)$ are given by
\begin{equation*}
    x=e_k \qquad \text{and} \qquad u=(\d_k v)(t,x)\;. 
\end{equation*}
Also by definition, we have $P_0 \big( \Phi_t(x) \big) = (\nabla v)(t,x)$. Hence, we have the identity
\begin{equation*}
    u-x^\sft P_0 = (\d_k v)(t,x) - \langle (\nabla v)(t,x), \, e_k \rangle = 0\;. 
\end{equation*}
This proves the identity \eqref{eq:cancellation_identity_component} and hence completes the proof of the lemma. 
\end{proof}

\begin{lem} \label{lem:nabla_Theta_hat_stability}
    Suppose $\|v_\eps\|_{\lL_t^\infty \cC_x^2 ([0,\tau] \times \overline{\widetilde{\oO}})}$ is uniformly bounded. Then for $k=0,1$, we have
    \begin{equation*}
        \big\| \nabla^k \big(\widehat{\Theta}_t^\eps - \widehat{\Theta}_t^0 \big) \|_{\lL_{t,x}^\infty([0,\tau] \times \overline{\widetilde{\oO}})} \lesssim d_\alpha (\W^\eps, \W) + \|v_\eps - v\|_{\lL_t^\infty \cC_x^2 ([0,\tau] \times \overline{\widetilde{\oO}})}\;.
    \end{equation*}
\end{lem}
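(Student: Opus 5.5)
We split the difference by inserting the intermediate quantity $\Theta^0_t \circ \Phi^\eps_t$ and use the triangle inequality:
\begin{equation*}
    \widehat{\Theta}^\eps_t - \widehat{\Theta}^0_t = \big(\Theta^\eps_t - \Theta^0_t\big)\circ \Phi^\eps_t + \big(\Theta^0_t\circ \Phi^\eps_t - \Theta^0_t\circ\Phi^0_t\big)\;,
\end{equation*}
where $\Phi^\eps_t(x) = (x, v_\eps(t,x), \nabla v_\eps(t,x))$ and $\Phi^0_t$ is the same with $v = v_0$. We treat $k=0$ and $k=1$ at the same time. For $k=1$ the chain rule gives
\begin{equation*}
    \nabla\widehat{\Theta}^\eps_t = (D\Theta^\eps_t)(\Phi^\eps_t)\,\nabla\Phi^\eps_t\;, \qquad \nabla\Phi^\eps_t = \big(\id, \nabla v_\eps, \nabla^2 v_\eps\big)\;.
\end{equation*}
This only involves up to two spatial derivatives of $v_\eps$. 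That is why the $\lL_t^\infty\cC_x^2$ norm of $v_\eps - v$ appears on the right-hand side.

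For the first term, we apply Lemma~\ref{lem:RDE_stability} with multi-indices $|\gamma|\leq 1$. It gives
\begin{equation*}
    \sup_{t}\big|D^\gamma_{\bm\theta}\Theta^\eps_t - D^\gamma_{\bm\theta}\Theta^0_t\big| \lesssim d_\alpha(\W^\eps,\W)
\end{equation*}
uniformly in $\bm\theta$. Evaluating at $\bm\theta = \Phi^\eps_t(x)$ handles the $k=0$ case directly. For $k=1$ we multiply by $\nabla\Phi^\eps_t$, which is bounded uniformly in $\eps$ because $\|v_\eps\|_{\lL_t^\infty\cC_x^2}$ is uniformly bounded. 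Because $H\in\cC^7_b$, Lemma~\ref{lem:RDE_stability} holds globally in $\bm\theta$, so we do not need to check that $\Phi^\eps_t$ stays in a compact set.

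For the second term, we use the mean value theorem in $\bm\theta$. Lemma~\ref{lem:RDE_stability} (or standard RDE flow bounds with constants uniform in $\bm\theta$) gives uniform bounds on $D\Theta^0_t$ and $D^2\Theta^0_t$. With these, the $k=0$ difference is bounded by
\begin{equation*}
    \|D\Theta^0\|_\infty\,|\Phi^\eps_t - \Phi^0_t| \lesssim \|v_\eps - v\|_{\cC^1}\;.
\end{equation*}
For $k=1$ we write
\begin{equation*}
    (D\Theta^0_t)(\Phi^\eps_t)\nabla\Phi^\eps_t - (D\Theta^0_t)(\Phi^0_t)\nabla\Phi^0_t = \big[(D\Theta^0_t)(\Phi^\eps_t) - (D\Theta^0_t)(\Phi^0_t)\big]\nabla\Phi^\eps_t + (D\Theta^0_t)(\Phi^0_t)\big(\nabla\Phi^\eps_t - \nabla\Phi^0_t\big)\;.
\end{equation*}
The first piece is bounded by $\|D^2\Theta^0\|_\infty\,|\Phi^\eps_t-\Phi^0_t|\,|\nabla\Phi^\eps_t|$. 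The second piece is bounded by $\|D\Theta^0\|_\infty\,\|v_\eps-v\|_{\cC^2}$. Adding the two terms gives the claimed bound.

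The only point needing care is that the constants in Lemma~\ref{lem:RDE_stability}, and the bounds on $D^2\Theta^0$, are uniform over all of $\RR^d\times\RR\times\RR^d$ and over $t\in[0,\tau]$. This follows from the global $\cC^7_b$ assumption on $H$, which gives enough derivatives for the flow, so the argument reduces to bookkeeping with the chain rule.
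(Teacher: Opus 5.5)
Your proof is correct and follows the same route as the paper. The paper just writes the chain-rule formula for $\nabla\widehat{\Theta}^\eps_t$ and cites Lemma~\ref{lem:RDE_stability} with the uniform $\cC^2$ bound on $v_\eps$; your intermediate term $\Theta^0_t\circ\Phi^\eps_t$ and the mean-value step (which uses a bound on $D^2\Theta^0_t$, standard flow regularity not literally contained in Lemma~\ref{lem:RDE_stability}) spell out what the paper leaves implicit. One correction: your stated reason for global uniformity in $\bm\theta$ is not right, since the characteristic vector fields contain $H_p\,p$ and $p\,H_u^\sft$ and so are not of class $\cC_b$ even when $H\in\cC^7_b$; but you never need global uniformity, because $\Phi^\eps_t(x)$, $\Phi^0_t(x)$ and the segment between them stay in a fixed bounded set determined by the uniform $\cC^2$ bounds.
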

\begin{proof}
    We prove the case $k=1$ only. Note that
    \begin{equation*}
        \nabla \widehat{\Theta}_t^\eps = \nabla \big( \Theta_t^\eps \circ \Phi_t^\eps \big) = (D_x \Theta_t^\eps)(\Phi_t^\eps) + (D_u \Theta_t^\eps)(\Phi_t^\eps) \cdot \nabla v_\eps + (D_p \Theta_t^\eps)(\Phi_t^\eps) \cdot \nabla^2 v_\eps\;.
    \end{equation*}
    The desired bound then follows from Lemma~\ref{lem:RDE_stability} and the assumption that $\|v_\eps\|_{\lL_t^\infty \cC_x^2([0,\tau] \times \overline{\widetilde{\oO}})}$ is uniformly bounded. 
\end{proof}

\begin{lem}\label{lem:nablaX_id}
For every $r>0$, there exists $\tau_\eps > 0$ such that for every $t\in[0,\tau_\eps]$ and $x \in \overline{\widetilde{\oO}}$, we have
\begin{equation} \label{eq:X_hat_approximate_identity}
    |\widehat{X}^\eps_t(x)-x|\leq r\;, \qquad
        \|\nabla^\sft\widehat{X}^\eps_t(x) - \id \| \leq r\;.
\end{equation}
As a result, for sufficiently small $r$, $\widehat{X}^\eps_t$ is injective for $t \in [0, \tau_\eps]$, and $\widehat{X}^\eps_t (\d \widetilde{\oO}) = \d \oO_t^\eps$.

If in addition, $\|v_\eps\|_{\lL_t^\infty \cC_x^2([0,T]\times 
\overline{\widetilde{\oO}})}$ is uniformly bounded in $\eps$, then for every $\oO \Subset \widetilde{\oO}$, there exists $\tau>0$ independent of $\eps$ such that
\begin{equation*}
    \oO \Subset \big\{ \widehat{X}_t^\eps(y): \; y \in \widetilde{\oO} \big\}
\end{equation*}
for all $t \in [0,\tau]$ and all sufficiently small $\eps$. Finally, if in addition,  $\|\nabla^3 v_0\|_{\lL_{t,x}^\infty([0,\tau] \times \overline{\oO_1})}$ is finite for some $\oO\Subset\oO_1\Subset\widetilde\oO$, then the inverse $\widehat{S}_t^\eps: \overline{\oO} \rightarrow \RR^d$ satisfies
\begin{equation*}
    \begin{split}
    \big\|\widehat{S}_t^\eps - \widehat{S}_t^0 \big\|_{\lL^\infty(\overline{\oO})} &\lesssim \big\| \widehat{X}_t^\eps - \widehat{X}_t^0 \big\|_{\lL^\infty(\widetilde{\oO})}\;,\\
    \big\|\nabla^\sft \big( \widehat{S}_t^\eps - \widehat{S}_t^0 \big) \big\|_{\lL^\infty(\overline{\oO})} &\lesssim \big\| \widehat{X}_t^\eps - \widehat{X}_t^0 \big\|_{\lL^\infty(\widetilde{\oO})} + \big\|\nabla^\sft \big( \widehat{X}_t^\eps - \widehat{X}_t^0 \big) \big\|_{\lL^\infty(\widetilde{\oO})}\;.
    \end{split}
\end{equation*}
The proportionality constants are independent of $t \in [0,\tau]$. 
\end{lem}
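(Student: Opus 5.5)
We first derive \eqref{eq:X_hat_approximate_identity} directly from Lemma~\ref{lem:RDE_stability}. Write $\widehat X^\eps_t(x)=X^\eps_t(x,v_\eps(t,x),\nabla v_\eps(t,x))$. Then $|\widehat X^\eps_t(x)-x|\lesssim t^\theta$ uniformly. By the chain rule,
\[
\nabla^\sft\widehat X^\eps_t=(D_xX^\eps_t)(\Phi^\eps_t)+(D_uX^\eps_t)(\Phi^\eps_t)\,\nabla^\sft v_\eps+(D_pX^\eps_t)(\Phi^\eps_t)\,\nabla^2v_\eps\;.
\]
By the same lemma, $\|\nabla^\sft\widehat X^\eps_t-\id\|\lesssim t^\theta(1+\|v_\eps\|_{\lL^\infty_t\cC^2_x})$. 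Choosing $\tau_\eps$ small gives both bounds with any prescribed $r$. If $\|v_\eps\|_{\lL_t^\infty\cC_x^2}$ is uniformly bounded, the constants, and hence $\tau$, are independent of $\eps$.

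For injectivity, the point is that $\widetilde\oO$ is not convex, so the mean value inequality cannot be applied along segments naively. Instead, we use a bounded Lipschitz domain with chord–arc property: there is $C_0$ such that any $x,y\in\overline{\widetilde\oO}$ are joined by a path in $\overline{\widetilde\oO}$ of length at most $C_0|x-y|$ (for $|x-y|$ small), while for $|x-y|\geq \eta$ we simply use the sup bound. Integrating $\nabla\widehat X^\eps_t-\id$ along such a path gives
\[
|\widehat X^\eps_t(x)-\widehat X^\eps_t(y)-(x-y)|\leq rC_0|x-y|\quad\text{for }|x-y|<\eta\;,
\]
and $|\widehat X^\eps_t(x)-\widehat X^\eps_t(y)|\geq \eta-2r$ otherwise. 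For $r<\min(1/(2C_0),\eta/3)$ this gives injectivity. Then $\widehat X^\eps_t$ is a $\cC^1$ injective map with invertible differential on $\overline{\widetilde\oO}$, hence a homeomorphism onto its image. Invariance of domain gives that interior maps to interior, so $\widehat X^\eps_t(\d\widetilde\oO)=\d\oO^\eps_t$.

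For $\oO\Subset\{\widehat X^\eps_t(y)\}$, let $\eta=\dist(\oO,\d\widetilde\oO)$ and take $r<\eta/4$. Consider the homotopy $s\mapsto (1-s)\,\id+s\widehat X^\eps_t$ on $\overline{\widetilde\oO}$. Each map in this family satisfies the same estimates and is injective. Moreover, the boundary images stay within $r$ of $\d\widetilde\oO$ and therefore avoid the $\eta/2$-neighbourhood of $\oO$. A degree argument then shows that every $z$ with $\dist(z,\oO)<\eta/2$ lies in the image. This gives $\oO\Subset\oO^\eps_t$, uniformly in $\eps$.

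For the stability of the inverses $\widehat S^\eps_t$, fix $z\in\overline\oO$ and set $y^\eps=\widehat S^\eps_t(z)$. Then
\[
\widehat X^0_t(y^\eps)-\widehat X^0_t(y^0)=\widehat X^0_t(y^\eps)-\widehat X^\eps_t(y^\eps)\;,
\]
and the bi-Lipschitz bound for $\widehat X^0_t$ from the previous step gives the first estimate. Both $y^\eps$ and $y^0$ lie in $\oO_1$ after shrinking $\tau$, since $|y-z|\leq r$.

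For the gradient, write $\nabla^\sft\widehat S^\eps_t(z)=(\nabla^\sft\widehat X^\eps_t(y^\eps))^{-1}$ and use $A^{-1}-B^{-1}=A^{-1}(B-A)B^{-1}$, where both inverses are bounded since $r<1/2$. This reduces the estimate to bounding
\[
\nabla\widehat X^\eps_t(y^\eps)-\nabla\widehat X^0_t(y^\eps)+\nabla\widehat X^0_t(y^\eps)-\nabla\widehat X^0_t(y^0)\;.
\]
The last difference is controlled by $\|\nabla^2\widehat X^0_t\|_{\lL^\infty(\oO_1)}|y^\eps-y^0|$. Since $\nabla^2\widehat X^0_t$ involves $\nabla^3v_0$ on $\oO_1$ and derivatives of $X$ bounded by Lemma~\ref{lem:RDE_stability}, this is exactly where the extra hypothesis on $\nabla^3v_0$ enters.

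The main obstacle is the global injectivity on a nonconvex domain together with the surjectivity onto a neighbourhood of $\oO$. We handle the first with the chord–arc property of the Lipschitz domain, and the second with a degree or homotopy argument.
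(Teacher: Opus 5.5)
Your proof is correct. Its structure matches the paper's: the chain rule plus Lemma~\ref{lem:RDE_stability} for the near-identity bounds, invariance of domain for $\widehat X^\eps_t(\d\widetilde\oO)=\d\oO^\eps_t$, and, for $\widehat S^\eps_t$, the lower Lipschitz bound for $\widehat X^0_t$ followed by $A^{-1}-B^{-1}=A^{-1}(B-A)B^{-1}$, with $\nabla^3 v_0$ entering through $\nabla^2\widehat X^0_t$ on $\oO_1$.

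Two steps are handled differently.

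\emph{Injectivity.} The paper only asserts that the near-identity bounds on a bounded connected Lipschitz domain imply injectivity. Your chord--arc (quasiconvexity) argument, splitting into close and far pairs, supplies that justification. It also yields the global lower bound $|\widehat X_t(x)-\widehat X_t(y)|\gtrsim|x-y|$, which the paper likewise just asserts from the Lipschitz-domain property when it bounds $\widehat S^\eps_t-\widehat S^0_t$.

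\emph{Covering $\oO\Subset\oO^\eps_t$.} The paper uses a Banach fixed-point argument. For each $x\in\overline\oO$, the map $T_x(y)=x-(\widehat X^\eps_t(y)-y)$ is a $\frac12$-contraction of $\overline{B(x,r_\oO)}\subset\widetilde\oO$ into itself, and its fixed point is a preimage of $x$. You instead use a homotopy from $\id$ and homotopy invariance of Brouwer degree. Your route needs only $\cC^0$-smallness of $\widehat X^\eps_t-\id$ on $\d\widetilde\oO$; the injectivity of the intermediate maps that you mention is not actually needed. The price is invoking degree theory. The paper's route is more elementary and purely local, but it does use the derivative bound on a ball around each point of $\overline\oO$.
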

\begin{proof}
Recall that $\widehat X^\eps_t(x)=X_t^\eps(\Phi^\eps_t(x))$. By the convention above and the chain rule,
\begin{equation*}
    \nabla^\sft\widehat X^\eps_t(x)
    =D_x X^\eps_t(\Phi^\eps_t(x))
    +D_u X^\eps_t(\Phi^\eps_t(x))(\nabla v_\eps(t,x))^\sft
    +D_p X^\eps_t(\Phi^\eps_t(x))\nabla^2 v_\eps(t,x)\;.
\end{equation*}
Combining the boundedness of $\nabla v_\eps$ and $\nabla^2 v_\eps$ with Lemma~\ref{lem:RDE_stability}, we may shrink the time interval to $[0, \tau_\eps]$ to get \eqref{eq:X_hat_approximate_identity}.
Since $\widetilde\oO$ is a bounded, connected Lipschitz domain, \eqref{eq:X_hat_approximate_identity} implies that $\widehat X_t^\eps$ is injective when $r$ is sufficiently small. Moreover, the continuous injection $\widehat X_t^\eps:\overline{\widetilde\oO}\to\RR^d$ is a homeomorphism onto its image, and invariance of domain therefore gives $\widehat X_t^\eps(\partial\widetilde\oO)=\partial\oO_t^\eps$.

As for the second claim, since $\oO \Subset \widetilde\oO$, there exists $r_{\oO}>0$ such that for every $x \in\overline{\oO}$, $\overline{B(x,r_{\oO})} \subset \widetilde\oO$. Consider the map $T_x:\overline{B(x,r_{\oO})}\to\RR^d$ defined by
\begin{equation*}
    T_x(y):= x - \bigl( \widehat X^\eps_t(y) - y \bigr)
\end{equation*}
Then
\begin{equation*}
    |T_x(x)-x| = \bigl| \widehat X^\eps_t(x) - x\bigr| \leq \frac{r_{\oO}}{2}\;, \quad \|\nabla^\sft T_x (y)\| = \bigl\|\nabla^\sft \widehat X^\eps_t(y) - \id \bigr\| \leq \frac{1}{2}\;,
\end{equation*}
where we have assumed that $t$ is small enough. As a result, $T_x:\overline{B(x,r_{\oO})}\to\overline{B(x,r_{\oO})}$ is a $\frac{1}{2}$-contraction. By the Banach fixed point theorem, there exists $y\in \overline{B(x,r_{\oO})}$ such that $T_x(y) =y$, which means $x = \widehat X^\eps_t(y)$ is in the image of $\widehat X^\eps_t(\cdot)$.
Here the time interval can be chosen independently of $\eps$ because the $\cC_x^2$ norms of $v_\eps$ and the rough-path norms are uniformly bounded.

To control the difference $\widehat{S}_t^\eps - \widehat{S}_t^0$, we first note that since $|(\nabla^\sft \widehat{X}_t^0)(y) - \id| < r$ for a sufficiently small $r$ for all $t \in [0,\tau]$ and $y \in \widetilde{\oO}$ and $\widetilde\oO$ is a bounded, connected Lipschitz domain, we have
\begin{equation*}
    \big| \widehat{S}_t^\eps(x) - \widehat{S}_t^0(x) \big| \lesssim \big| \widehat{X}_t^0 \big( \widehat{S}_t^\eps(x) \big) - \widehat{X}_t^0 \big( \widehat{S}_t^0(x) \big) \big| = \big| \widehat{X}_t^0 \big( \widehat{S}_t^\eps(x) \big) - x \big|\;,
\end{equation*}
where the proportionality constant is uniform in $x \in \oO$. On the other hand, we also have
\begin{equation*}
    \big| \widehat{X}_t^0 \big( \widehat{S}_t^\eps(x) \big) - x \big| = \big| \widehat{X}_t^0 \big( \widehat{S}_t^\eps(x) \big) - \widehat{X}_t^\eps \big( \widehat{S}_t^\eps(x) \big) \big| \lesssim \| \widehat{X}_t^\eps - \widehat{X}_t^0 \|_{\lL^\infty(\widetilde{\oO})}
\end{equation*}
uniformly in $t \in [0,\tau]$. Combining the above proves the bound for $\widehat{S}_t^\eps - \widehat{S}_t^0$. As for the difference of the gradients, we have
\begin{equation*}
    \nabla^\sft \widehat{S}_t^\eps = \big( \nabla^\sft \widehat{X}_t^\eps (\widehat{S}_t^\eps) \big)^{-1}
\end{equation*}
as matrix inverse. Now, using the identity
\begin{equation*}
    A^{-1} - B^{-1} = A^{-1} (B-A) B^{-1}
\end{equation*}
as well as $\nabla^\sft \widehat{X}_t^\eps$ is close to identity (and hence has a uniformly bounded inverse), we get 
\begin{equation*}
    \begin{split}
     \big| \nabla^\sft \widehat{S}_t^\eps - \nabla^\sft \widehat{S}_t^0 \big| &\lesssim \big| \nabla^\sft \widehat{X}_t^\eps (\widehat{S}_t^\eps) - \nabla^\sft \widehat{X}_t^0 (\widehat{S}_t^0) \big|\\
     &\leq \big| \nabla^\sft \widehat{X}_t^\eps (\widehat{S}_t^\eps) - \nabla^\sft \widehat{X}_t^0 (\widehat{S}_t^\eps) \big| + \big| \nabla^\sft \widehat{X}_t^0 (\widehat{S}_t^\eps) - \nabla^\sft \widehat{X}_t^0 (\widehat{S}_t^0) \big|\;.
     \end{split}
\end{equation*}
Note that we have
\begin{equation*}
    \|\nabla^2 \widehat{X}_t^0\|_{\lL^\infty(\overline{\oO_1})} \lesssim 1+\|v_0\|_{\lL_t^\infty\cC_x^3([0,\tau]\times\overline{\oO_1})}
\end{equation*}
on $[0,\tau]$ and after decreasing $\tau$ if necessary, we also have $\widehat S_t^\eps(\overline{\oO})\subset\oO_1$ for all sufficiently small $\eps\geq 0$ and $t\in[0,\tau]$. Combining these  gives the desired bound for $\|\nabla^\sft(\widehat S_t^\eps-\widehat S_t^0)\|_{\lL^\infty(\overline{\oO})}$. This completes the proof of the lemma.
\end{proof}

\subsection{Proof of Lemma~\ref{lem:transform}}

\begin{proof} [Proof of Lemma~\ref{lem:transform}]
By Lemma~\ref{lem:nablaX_id}, after decreasing $\tau$ if necessary, $\widehat{X}_t$ is injective on $\overline{\widetilde{\oO}}$ for $t\in[0,\tau]$. We set $\widehat{S}_t := \widehat{X}_t^{-1}$ be the inverse, and define
\[
    \rho(t,x) := \widehat{U}_t \big( \widehat{S}_t (x) \big)\;,\quad x\in\overline{\oO}_t:= \widehat{X}_t (\overline{\widetilde{\oO}})\;.
\]
Since $\widehat{S}_t$ is the inverse of $\widehat{X}_t$, by Lemmas~\ref{lem:composition_2} and~\ref{lem:inverse_rough_path}, one has
\begin{equation*}
    \begin{split}
    \widehat{U}_t \big( \widehat{S}_t (x) \big) - \widehat{U}_s \big( \widehat{S}_s (x) \big) = &\phantom{11} \eE_s \big( \widehat{S}_s(x) \big) \, (W_t - W_s) + \langle \eE_s' \big( \widehat{S}_s(x) \big), \, \WW_{s,t} \rangle\\
    &+ \fF_s \big( \widehat{S}_s(x) \big) \cdot (t-s) + \oO \big(|t-s|^{1+\gamma} \big)
    \end{split}
\end{equation*}
for some $\gamma>0$. In other words, both the rough path integrand and drift are functions of $\widehat{S}$. To compute $(\eE, \eE')$ and $\fF$, we need information on the controlled integrand $(A,A')$ and drift $B$ for $\widehat{U}$ as well as the pair $(C, C')$ and $D$ for $\widehat{S}$. For $\widehat{U}$, we have
\begin{equation*}
    \md \widehat{U}_t = \big(A_t, A_t' \big) \, \md \W_t + B_t \,\md t\;,
\end{equation*}
where
\begin{equation} \label{eq:rp_integrand_Phat}
    \begin{split}
    &A_t = H^\sft \big( \widehat{\Theta}_t \big) - P^\sft \big( \widehat{\Theta}_t \big) H_p^\sft \big( \widehat{\Theta}_t \big)\;, \qquad B_t = (D U_t)(\Phi_t) \cdot \dot{\Phi}_t\;,\\
    &A_t' = D\big((H-H_p p)^\sft\big)(\widehat{\Theta}_t) \cdot \vV\big(\widehat{\Theta}_t\big)\;.
    \end{split}
\end{equation}
Here
\[
    \vV(x,u,p):=
    \begin{pmatrix}
        -H_p^\sft\\
        (H-H_p p)^\sft\\
        H_x^\sft+pH_u^\sft
    \end{pmatrix}(x,u,p)\in\RR^{(2d+1)\times m}\;.
\]
All the above expressions make sense as matrix multiplication. As for $\widehat{X}$, we have
\begin{equation*}
    \md \widehat{X}_t = \big(\widetilde{A}_t, \widetilde{A}_t' \big) \, \md \W_t + \widetilde{B}_t \, \md t\;,
\end{equation*}
where
\begin{equation} \label{eq:rp_integrand_Xhat}
    \widetilde{A}_t = - H_p^\sft \big( \widehat{\Theta}_t \big)\;, \quad \widetilde{A}_t' = - D(H_p^\sft)\big( \widehat{\Theta}_t \big) \cdot \vV \big( \widehat{\Theta}_t \big)\;, \quad \widetilde{B}_t = (D X_t)(\Phi_t) \cdot \dot{\Phi}_t\;.
\end{equation}
The expression for $\widetilde{A}_t'$ is in the sense that its $k$-th component is given by
\begin{equation} \label{eq:rp_integrand_Xhat_derivative}
    \big( \widetilde{A}_t' \big)^{(k)} = - D\big(H_{p_k}^\sft\big)\big( \widehat{\Theta}_t \big) \cdot \vV \big( \widehat{\Theta}_t \big)
\end{equation}
as an $m \times m$ matrix. Again, all the above operations make sense as matrix multiplications. Hence, by Lemma~\ref{lem:inverse_rough_path}, we have
\begin{equation*}
    \md \widehat{S}_t = \big( C_t (\widehat{S}_t), \, C_t'(\widehat{S}_t) \big) \, \md \W_t + D_t(\widehat{S}_t) \, \md t\;,
\end{equation*}
where
\begin{equation} \label{eq:rp_integrand_Shat}
    \begin{split}
    &C_t = - \big( \nabla^\sft \widehat{X}_t \big)^{-1} \widetilde{A}_t = \big( \nabla^\sft \widehat{X}_t \big)^{-1} H_p^\sft \big( \widehat{\Theta}_t \big)\;,\\
    &D_t = - \big( \nabla^\sft \widehat{X}_t \big)^{-1} \widetilde{B}_t = - \big( \nabla^\sft \widehat{X}_t \big)^{-1} (D X_t)(\Phi_t) \cdot \dot{\Phi}_t\;,
    \end{split}
\end{equation}
and $C_t'$ is defined via the relation
\begin{equation*} 
    \langle C_t', \WW_{s,t} \rangle = - \big( \nabla^\sft \widehat{X}_t \big)^{-1} \Big\langle \widetilde{A}_t'  + 2 \, \Sym \big( \nabla^\sft \widetilde{A}_t^\sft \cdot C_t \big) + C_t^\sft \, \nabla^2 \widehat{X}_t \, C_t, \; \WW_{s,t} \Big\rangle\;.
\end{equation*}
Again, by the identity \eqref{eq:jacobian_identity_special}, we have
\begin{equation} \label{eq:rp_integrand_Shat_derivative}
    \begin{split}
    &\phantom{111}\nabla^\sft \widehat{U}_t \langle C_t', \WW_{s,t} \rangle\\
    &= - \sum_{k=1}^{d} \widehat P_t^{(k)} \Big\langle \big(\widetilde{A}_t'\big)^{(k)}  + 2 \, \Sym \big( \nabla^\sft \widetilde{A}_t^{(k)} \cdot C_t \big) + C_t^\sft \, \nabla^2 \widehat{X}^{(k)}_t \, C_t, \; \WW_{s,t} \Big\rangle\;.
    \end{split}
\end{equation}
We are now ready to compute the controlled integrand $(\eE_t, \eE_t')$. For $\eE_t$, it follows directly from Lemma~\ref{lem:composition_2}, the form of $A$ and $C$ in \eqref{eq:rp_integrand_Phat} and \eqref{eq:rp_integrand_Shat}, and the identity $\nabla^\sft \widehat{U}_t \big( \nabla^\sft \widehat{X}_t \big)^{-1} = \widehat{P}_t^\sft$ from Lemma~\ref{lem:jacobian_identity} that
\begin{equation*}
    \eE_t = A_t + \nabla^\sft \widehat{U}_t \cdot C_t = H^\sft \big( \widehat{\Theta}_t \big)\;.
\end{equation*}
For $\eE'$, by Lemma~\ref{lem:composition_2} and \eqref{eq:rp_integrand_Shat_derivative}, we have
\begin{equation*}
    \langle \eE_t', \, \WW_{s,t} \rangle = \Big\langle A_t' + 2 \, \Sym \big( \nabla^\sft A_t^\sft \cdot C_t \big) + C_t^\sft \, \nabla^2 \widehat{U}_t \, C_t, \; \WW_{s,t} \Big\rangle + \nabla^\sft \widehat{U}_t \langle C_t', \, \WW_{s,t} \rangle\;.
\end{equation*}
Combining with the expression \eqref{eq:rp_integrand_Shat_derivative}, we can write
\begin{equation*}
    \eE_t' = \eE_{t,1}' + \eE_{t,2}' + \eE_{t,3}'
\end{equation*}
with
\begin{equation*}
    \begin{split}
    &\eE_{t,1}' = A_t' + \sum_k \widehat P_t^{(k)} \big(\widetilde{A}_t'\big)^{(k)}\;, \quad \eE_{t,2}' = 2 \, \Sym \Big( \big( \nabla^\sft A_t^\sft - \sum_k \widehat P_t^{(k)} \nabla^\sft \widetilde{A}_t^{(k)} \big) \; C_t \Big)\;,\\
    &\eE_{t,3}' = C_t^\sft \Big( \nabla^2 \widehat{U}_t - \sum_k \widehat P_t^{(k)} \nabla^2 \widehat{X}^{(k)}_t  \Big) C_t\;,
    \end{split}
\end{equation*}
where the sums above are all over $k$ from $1$ to $d$. For $\eE_{t,1}$, according to the expressions of $A'$ and $\widetilde{A}'$ in \eqref{eq:rp_integrand_Phat} and \eqref{eq:rp_integrand_Xhat_derivative}, we have
\begin{equation} \label{eq:rp_integrand_control_1}
    \eE_{t,1}' = - H_x \big( \widehat{\Theta}_t \big) H_p^\sft \big( \widehat{\Theta}_t \big) + H_u \big( \widehat{\Theta}_t \big) \Big( H^\sft \big( \widehat{\Theta}_t \big) - P^\sft \big( \widehat{\Theta}_t \big) H_p^\sft \big( \widehat{\Theta}_t \big) \Big)\;.
\end{equation}
For $\eE_{t,2}'$, by the expressions for $A_t$ and $\widetilde{A}_t$ in \eqref{eq:rp_integrand_Phat} and \eqref{eq:rp_integrand_Xhat}, we have
\begin{equation*}
    \begin{split}
    &\phantom{111}\nabla^\sft A_t^\sft - \sum_k \widehat P_t^{(k)} \nabla^\sft (\widetilde{A}_t)^{(k)}\\
    &= \nabla^{\sft} \Big( H ( \widehat{\Theta}_t ) - H_p ( \widehat{\Theta}_t ) \widehat P_t \Big) + \sum_{k} \widehat P_t^{(k)} \nabla^\sft \Big( H_{p_k} \big( \widehat{\Theta}_t \big) \Big)\\
    &= \nabla^\sft \Big( H(\widehat{\Theta}_t) \Big) - H_p \big(\widehat{\Theta}_t \big) \nabla^\sft \Big( P (\widehat{\Theta}_t) \Big) = H_x \big( \widehat{\Theta}_t \big) \nabla^\sft \widehat{X}_t + H_u \big( \widehat{\Theta}_t \big) \nabla^\sft \widehat{U}_t\;.
    \end{split}
\end{equation*}
Together with the form for $C_t$ from \eqref{eq:rp_integrand_Shat}, we have
\begin{equation} \label{eq:rp_integrand_control_2}
    \eE_{t,2}' = \big( H_x + H_u P^\sft \big) H_p^\sft + H_p \big( H_x^\sft + P H_u^\sft \big)\;,
\end{equation}
where all quantities are evaluated at $\widehat{\Theta}_t$. Finally for $\eE_{t,3}'$, we first note that by differentiating \eqref{eq:jacobian_identity_special}, we have
\begin{equation*}
    \nabla^2 \widehat{U}_t - \sum_k \widehat P_t^{(k)} \nabla^2 \widehat{X}_t^{(k)} = \nabla \widehat{X}_t^\sft \cdot \nabla^\sft \widehat{P}_t\;.
\end{equation*}
Again, combining it with the expression for $C_t$ in \eqref{eq:rp_integrand_Shat}, we have
\begin{equation} \label{eq:rp_integrand_control_3}
    \eE_{t,3}' = H_p \big( \widehat{\Theta}_t \big) \cdot \nabla^\sft \widehat{P}_t \cdot \big( \nabla^\sft \widehat{X}_t \big)^{-1} \cdot H_p^\sft \big( \widehat{\Theta}_t \big)\;.
\end{equation}
Combining \eqref{eq:rp_integrand_control_1}, \eqref{eq:rp_integrand_control_2} and \eqref{eq:rp_integrand_control_3}, we get
\begin{equation*}
    \eE_t' = H_u H^\sft + H_p \Big( H_x^\sft + P H_u^\sft + \nabla^\sft \widehat{P}_t \cdot \big( \nabla^\sft \widehat{X}_t \big)^{-1} \cdot H_p^\sft \Big)\;,
\end{equation*}
where all the quantities involving $H$ and the component function $P$ are evaluated at $\widehat{\Theta}_t$. 

Next we show that $\eE_t'$ given above is indeed the ``derivative" process for $\hH(\rho)$. By the form of $\hH(\rho)$, we need also to control the fluctuation of $\nabla \rho$. Taking $\nabla^\sft$ on $\rho(t,\cdot) = \widehat{U}_t \circ \widehat{S}_t$ and using that $\nabla^\sft \widehat{S}_t = (\nabla^\sft \widehat{X}_t)^{-1}(\widehat{S}_t)$ as matrix inverse, we have
\begin{equation*}
    \nabla^\sft \rho (t, \cdot) = (\nabla^\sft \widehat{U}_t)(\widehat{S}_t) \cdot (\nabla^{\sft} \widehat{X}_t)^{-1}(\widehat{S}_t) = \widehat{P}_t^{\sft} (\widehat{S}_t)\;,
\end{equation*}
where we also used \eqref{eq:jacobian_identity_special} in the last equality. Taking another gradient on both sides above, we get
\begin{equation*}
    \nabla^2 \rho (t, \cdot) = (\nabla^\sft \widehat{P}_t)(\widehat{S}_t) \cdot (\nabla^\sft \widehat{X}_t)^{-1}(\widehat{S}_t)\;.
\end{equation*}
Now, applying Lemma~\ref{lem:composition_1} to $\nabla \rho (t, \cdot) = \widehat{P}_t \circ \widehat{S}_t$, we see $\nabla \rho$ is controlled by $W$ with derivative process
\begin{equation*}
    (\nabla \rho)'_t = H_x^\sft \big( \widehat{\Theta}_t \big) + P \big( \widehat{\Theta}_t \big) H_u^\sft \big( \widehat{\Theta}_t \big) + (\nabla^\sft \widehat{P}_t) \cdot (\nabla^{\sft} \widehat{X}_t)^{-1} \cdot H_p^\sft\;,
\end{equation*}
where all quantities on the right hand side above are evaluated at (composed with) $\widehat{S}_t$. Hence, combining all the above, we see the process $H \big(x, \rho(t,x), \nabla \rho(t,x) \big)$ (as a process in $t$) is controlled by $W$ with the Gubinelli derivative $(\hH\rho)'_t$ precisely given by $\eE_t'$. 

We now compute the drift term $\fF_t$. Again by Lemma~\ref{lem:composition_2}, we have
\begin{equation*}
    \fF_t = B_t + \nabla^\sft \widehat{U}_t \cdot D_t = \Big( (D U_t)(\Phi_t) - P_t^\sft(\Phi_t) \cdot (D X_t)(\Phi_t) \Big) \cdot \dot{\Phi}_t\;.
\end{equation*}
By Lemma~\ref{lem:jacobian_identity} with $\bm\theta = \dot{\Phi}_t = (0, \d_t v, \d_t \nabla v)$, we get
\begin{equation*}
    \begin{split}
    \fF_t &= \d_t v \cdot \exp \Big( \int_0^t H_u^\sft \big( \Theta_s(\Phi_t(x)) \big) \,\md \W_s \Big)\\
    &= \widetilde{F}\big(t, \Phi_t(x), \nabla^2 v \big) \cdot \exp \Big( \int_0^t H_u^\sft \big( \Theta_s(\Phi_t(x)) \big) \,\md \W_s \Big)\;.
    \end{split}
\end{equation*}
By definition of $\widetilde{F}$ in Lemma~\ref{lem:transform}, we get
\begin{equation*}
    \fF_t \big( \widehat{S}_t(x) \big) = F \Big( t, \; \widehat{\Theta}_t\big(\widehat{S}_t(x)\big), \; \big( \nabla^\sft \widehat{P}_t \big)\big( \widehat{S}_t(x) \big) \cdot \big( \nabla^\sft \widehat{X}_t \big)^{-1} \big( \widehat{S}_t(x) \big) \Big)\;.
\end{equation*}
Note that
\begin{equation*}
    \begin{split}
    \nabla^\sft \widehat{P}_t &= D_x P_t + D_u P_t \cdot \nabla^\sft v + D_p P_t \cdot \nabla^2 v\;,\\
    \nabla^\sft \widehat{X}_t &= D_x X_t + D_u X_t \cdot \nabla^\sft v + D_p X_t \cdot \nabla^2 v\;,
    \end{split}
\end{equation*}
we have thus verified the pointwise relation for $\rho$ in Definition~\ref{def:RPDE}. 

It remains to check the initial and boundary conditions. At $t=0$, we have $\widehat{S}_0 = \widehat{X}_0 = \id$, and hence
\begin{equation*}
    \widehat{U}_0 \big( \widehat{S}_0 (x) \big) = U_0 \big( \Phi_0 (x) \big) = \rho^0(x)\;,
\end{equation*}
    which agrees with the initial condition. As for the boundary condition, by Lemma~\ref{lem:nablaX_id}, for every $x \in \d \oO_t$, there exists $y \in \d \widetilde{\oO}$ such that $\widehat{X}_t(y) = x$. We then have
\begin{equation*}
    \rho(t,x) = \widehat{U}_t \big( \widehat{S}_t (x) \big) = \widehat{U}_t(y)\;, \qquad \nabla \rho (t,x) = \widehat{P}_t \big( \widehat{S}_t (x) \big) = \widehat{P}_t (y)\;.
\end{equation*}
This gives
\begin{equation*}
    G \big(x, \rho(t,x), \nabla \rho (t,x) \big) = G \big( \widehat{\Theta}_t(y) \big) = \widetilde{G}\big(t, \Phi_t(y) \big) = 0
\end{equation*}
for $x \in \d \oO_t$. Finally, the remaining regularity assertions follow from Lemmas~\ref{lem:composition_2} and \ref{lem:inverse_rough_path}, applied to the compositions above and to their spatial derivatives. Moreover,
\[
    \nabla^3\rho=\left[
    \nabla^\sft\left(
    \nabla^\sft\widehat P_t
    \bigl(\nabla^\sft\widehat X_t\bigr)^{-1}
    \right)
    \bigl(\nabla^\sft\widehat X_t\bigr)^{-1}
    \right]\circ\widehat S_t\;,
\]
and the same composition estimates give $\left(\nabla^2 \rho,\nabla^2(\hH\rho)\right)\in\lL^\infty_{x,loc}\dD^{2\alpha}_{W}$ and $\nabla^3\rho\in\lL_{x,loc}^\infty\cC_t^{\alpha}$. Differentiating the reconstruction formulas once more and applying the same composition and inverse-map estimates gives the stated bound for $\|\nabla^4\rho\|$.

We now turn to stability of the solution, assuming $\|v_\eps\|_{\lL_t^\infty \cC_x^2}$ being uniformly bounded. Fix $\oO \Subset \widetilde{\oO}$. By Lemma~\ref{lem:nablaX_id}, there exists $\tau>0$ independent of $\eps$ such that
\begin{equation*}
    \begin{split}
    \rho_\eps (t,x) &= \widehat{U}_t^\eps \big( \widehat{S}_t^\eps (x) \big)\;, \quad \nabla \rho_\eps (t,x) = \widehat{P}_t^\eps \big( \widehat{S}_t^\eps (x) \big)\;,\\
    \nabla^2 \rho_\eps (t,x) &= \nabla^\sft \widehat{P}_t^\eps \big( \widehat{S}_t^\eps (x) \big) \cdot  \Big( \nabla^\sft \widehat{X}_t^\eps \big( \widehat{S}_t^\eps (x) \big) \Big)^{-1}\;.
    \end{split}
\end{equation*}
are all defined on $[0,\tau] \times \oO$ for all sufficiently small $\eps$. The desired bound \eqref{eq:PDE_stability} then follows from Lemmas~\ref{lem:RDE_stability}, \ref{lem:nabla_Theta_hat_stability} and~\ref{lem:nablaX_id}. This completes the proof of Lemma~\ref{lem:transform}. 
\end{proof}

\subsection{Stability of the deterministic PDE}

For $R, T>0$ and $g \in \cC^{2+2\alpha}(\overline{\widetilde{\oO}})$, define the set $\yY_T^{g;R}$ of spacetime functions by
\begin{equation*}
    \yY_T^{g;R} := \Big\{ v \in \cC_{\pa}^{2+2\alpha}\big([0,T] \times \overline{\widetilde{\oO}} \big): v(0,\cdot)=g,\ \|v-g\|_{\cC_{\pa}^{2+2\alpha}([0,T] \times \overline{\widetilde{\oO}})} \leq R \Big\}\;.
\end{equation*}
Here, $g$ is regarded as a spacetime function which is constant in time.

\begin{lem}\label{lem:nonlinear_PDE}
Fix $\alpha \in (0,\frac{1}{2})$. Let $\widetilde\oO \subset \RR^d$ be an open bounded domain with smooth boundary. Fix $R_0>0$ and $(\bar{u}, \bar{p}, \bar{q}) \in \RR \times \RR^d \times \RR^{d \times d}$. Consider the domains
\begin{equation*}
    Q_T = [0,T] \times \overline{\widetilde{\oO}}\times B\bigl((\bar u,\bar p,\bar q),R_0\bigr)\;, \qquad S=\overline{\widetilde{\oO}}\times B\bigl((\bar u,\bar p),R_0\bigr)\;. 
\end{equation*}
Suppose $\{\widetilde{F}_\eps\}$, $\{\widetilde{G}_\eps\}$ and $\{g_\eps\}$ are families of real-valued functions on $Q_T, S$ and $\overline{\widetilde{\oO}}$ and satisfy the followings:
\begin{enumerate}
\item $\widetilde{F}_\eps$ is differentiable in $q$ and satisfies the uniform ellipticity condition: there exists $\lambda>0$ such that
\begin{equation} \label{e:condition_F_elliptic}
\sum_{i,j=1}^d \d_{q_{ij}} \widetilde{F}_\eps(t,x,u,p,q) \, \eta_i \, \eta_j \geq \lambda |\eta|^2
\end{equation}
uniformly over $(t,x,u,p,q) \in Q_T$, $\eta \in \RR^d$ and $\eps \in [0,1]$. 

\item Each $\widetilde{F}_\eps$ is twice differentiable in $(u,p,q)$, and that the derivatives up to order two belongs to $\cC_\pa^{2\alpha}$ in $(t,x)$. More precisely, we have
\begin{equation} \label{e:condition_F}
|\!|\!| \widetilde{F}_\eps |\!|\!|_{\mathcal F,\mathrm{PDE}}:=
\sup_{\substack{(u,p,q) \in B((\bar u,\bar p,\bar q),R_0)\\ |\beta|\leq 2}}
    \bigl\| D_{(u,p,q)}^\beta \widetilde F_\eps(\cdot\,,\cdot\,,u,p,q) \bigr\|_{\cC_{\pa}^{2\alpha}([0,T]\times\overline{\widetilde{\oO}})}
<+\infty\;.
\end{equation}

\item $\widetilde G_\eps$ is three times differentiable in $(u,p)$ and each derivative $D_{(u,p)}^\beta \widetilde G_\eps$ with $|\beta|\leq3$ belongs to $\cC^{1+2\alpha}(\overline{\widetilde{\oO}})$ in $x$, uniformly with respect to $(u,p)$. More precisely, we have
\begin{equation}\label{e:condition_G}
|\!|\!| \widetilde G_\eps |\!|\!|_{\mathcal G}:=
\sup_{\substack{(u,p) \in B((\bar u,\bar p),R_0) \\ |\beta|\leq 3}}
    \bigl\| D_{(u,p)}^\beta \widetilde G_\eps(\cdot\,,u,p) \bigr\|_{\cC^{1+2\alpha}(\overline{\widetilde{\oO}})} <+\infty\;.
\end{equation}

\item $g_\eps \in \cC^{2+2\alpha} (\overline{\widetilde{\oO}})$ satisfies the compatibility condition
\begin{equation*}
\widetilde G_\eps\bigl(x,g_\eps(x),\nabla g_\eps(x)\bigr)=0\;,\quad x\in\partial\widetilde\oO\;,
\end{equation*}
and the uniform non–tangentiality condition: there exists $c>0$ such that
\begin{equation} \label{e:condition_G_non_tangent}
\inf_{x \in \d \widetilde{\oO},\eps\in[0,1]} \Big| \sum_{i=1}^d \d_{p_i} \widetilde{G}_\eps \big( x,g_\eps(x),\nabla g_\eps(x) \big) \nu_i(x) \Big| \geq c\;,
\end{equation}
where $\nu(x)$ denotes the unit outward normal at $x \in \partial \widetilde\oO$.
Moreover, the range of $(g_\eps,\nabla g_\eps,\nabla^2 g_\eps)$ is contained in the ball of radius $R_0/2$ centered at $(\bar u,\bar p,\bar q)$ . 
\end{enumerate}
Consider the second order fully nonlinear parabolic initial-boundary value problem
\begin{equation}\label{e:nonlinear_PDE}
\left\{
\begin{aligned}
&\d_t u_\eps(t,x) = \widetilde F_\eps \bigl(t,x,u_\eps(t,x),\nabla u_\eps(t,x),\nabla^2 u_\eps(t,x) \bigr)\;, \quad &&t\in [0,T]\;, x\in\overline{\widetilde{\oO}}\;,\\
&\widetilde G_\eps \bigl(x,u_\eps(t,x),\nabla u_\eps(t,x)\bigr) = 0\;,&& t\in[0,T]\;, x\in\partial\widetilde\oO\;,\\
&u_\eps(0) = g_\eps\;, && x\in\overline{\widetilde{\oO}}\;.
\end{aligned}\right.
\end{equation}
Then for every $\eps$, there exist $\tau_\eps > 0$ and $R_\eps>0$ such that the equation \eqref{e:nonlinear_PDE} has a unique solution $u_\eps \in \yY_{\tau_\eps}^{g_\eps; R_\eps}$. Furthermore, the local existence time $\tau_\eps$ and the radius $R_\eps$ depend on $\widetilde{F}_\eps$, $\widetilde{G}_\eps$ and $g_\eps$ via the following quantity $\mM_\eps$ only:
\begin{equation} \label{e:norms_F_G_f}
    \mM_\eps:= |\!|\!|\widetilde F_\eps|\!|\!|_{\mathcal F,\mathrm{PDE}} +|\!|\!|\widetilde G_\eps|\!|\!|_{\mathcal G} + \|g_\eps\|_{\cC^{2+2\alpha}(\overline{\widetilde{\oO}})}\;.
\end{equation}
and one has the bound
\begin{equation} \label{eq:bound_nonlinear_PDE}
    \|u_\eps\|_{\cC_{\pa}^{2+2\alpha}( [0,\tau_\eps] \times \overline{\widetilde{\oO}})} \lesssim_{\mM_\eps} 1\;.
\end{equation}
If furthermore, $\mM_\eps$ is uniformly bounded in $\eps$, then there exists $\tau>0$ independent of $\eps \in [0,1]$ such that
\begin{equation*}
    \|u_\eps-u_0\|_{\cC_{\pa}^{2+2\alpha}([0,\tau]\times\overline{\widetilde{\oO}})} \lesssim |\!|\!|\widetilde F_\eps - \widetilde F_0|\!|\!|_{\mathcal F,\mathrm{PDE}} +|\!|\!|\widetilde G_\eps-\widetilde G_0|\!|\!|_{\mathcal G} + \|g_\eps-g_0\|_{\cC^{2+2\alpha}(\overline{\widetilde{\oO}})}\;.
\end{equation*}
\end{lem}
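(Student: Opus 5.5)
We prove Lemma~\ref{lem:nonlinear_PDE} by linearisation around the initial datum and a contraction argument in $\yY_T^{g_\eps;R}$, in the spirit of the classical theory of fully nonlinear parabolic problems with nonlinear oblique boundary conditions (Lunardi's approach). Fix $\eps$ and write $g=g_\eps$. For $w\in\yY_T^{g;R}$ with $T$ small, the range of $(w,\nabla w,\nabla^2 w)$ stays in $B((\bar u,\bar p,\bar q),R_0)$. This holds because $\|w-g\|_{\lL^\infty_t\cC^2_x}\lesssim T^{\alpha}R$, using $w(0)=g$ and the $\cC^{2\alpha}_\pa$ control of $\nabla^2 w$ in time. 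Set
\begin{equation*}
    A(x):=D_q\widetilde F_\eps(0,x,g,\nabla g,\nabla^2 g)\;, \qquad \beta(x):=D_p\widetilde G_\eps(x,g,\nabla g)\;, \qquad \gamma(x):=D_u\widetilde G_\eps(x,g,\nabla g)\;.
\end{equation*}
The linear problem $\d_t z - A:\nabla^2 z = h$ in $\widetilde\oO$, $\beta\cdot\nabla z+\gamma z=k$ on $\d\widetilde\oO$, $z(0)=z_0$ is then uniformly parabolic with uniformly oblique boundary operator. This follows from \eqref{e:condition_F_elliptic} and \eqref{e:condition_G_non_tangent}, and the coefficients are $\cC^{2\alpha}$ and $\cC^{1+2\alpha}$ respectively. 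Classical Schauder theory for oblique derivative problems (Ladyzhenskaya--Solonnikov--Ural'tseva, Lunardi Thm.~5.1.20) gives a solution in $\cC^{2+2\alpha}_\pa$, provided $h\in\cC^{2\alpha}_\pa$, $k\in\cC^{(1+2\alpha)/2,1+2\alpha}$ on the lateral boundary and the first-order compatibility $\beta\cdot\nabla z_0+\gamma z_0=k(0)$ holds. The estimate has a constant uniform for $T\le 1$.

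Define the map $\Gamma(w)=z$ as the solution of the linear problem with
\begin{equation*}
    h=\widetilde F_\eps(t,x,w,\nabla w,\nabla^2 w)-A:\nabla^2 w\;, \qquad k=\beta\cdot\nabla w+\gamma w-\widetilde G_\eps(x,w,\nabla w)\;, \qquad z_0=g\;.
\end{equation*}
Compatibility at $t=0$ follows from $\widetilde G_\eps(x,g,\nabla g)=0$, and a fixed point is exactly a solution of \eqref{e:nonlinear_PDE}. The key estimates are Taylor remainder bounds. Using the two $(u,p,q)$-derivatives from \eqref{e:condition_F}, one gets
\begin{equation*}
    \|h(w_1)-h(w_2)\|_{\cC^{2\alpha}_\pa}\lesssim \Bigl(T^{\alpha'}+\sup\bigl|(w_i,\nabla w_i,\nabla^2 w_i)-(g,\nabla g,\nabla^2 g)\bigr|\Bigr)\|w_1-w_2\|_{\cC^{2+2\alpha}_\pa}\;,
\end{equation*}
and the difference inside the supremum is again $\lesssim T^{\alpha'}R$. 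The boundary term $k$ is analogous, using three derivatives of $\widetilde G_\eps$ to control the $\cC^{1+2\alpha}$ norm in $x$ and the $\cC^{1/2+\alpha}$ norm in $t$ of $\nabla w$ restricted to the boundary. Choose $R$ comparable to the linear bound for $\Gamma(g)-g$, and then $T$ small. With these choices $\Gamma$ maps $\yY_T^{g;R}$ into itself and is a $\tfrac12$-contraction. All constants depend only on $\mM_\eps$, $\lambda$, $c$, $R_0$ and the domain, which gives $\tau_\eps,R_\eps$ as functions of $\mM_\eps$ together with \eqref{eq:bound_nonlinear_PDE}. Uniqueness in $\yY^{g;R}$ is the contraction property.

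For stability, take $\mM_\eps$ uniformly bounded, so $\tau$ and $R$ are uniform. Subtract the fixed-point identities. The problem is that the linearised operators depend on $g_\eps$. We would handle this by using the operator built from $g_0$ for both $\eps$ and $0$, absorbing $(A_\eps-A_0):\nabla^2 u_\eps$ into $h$; this term is small by $\|g_\eps-g_0\|_{\cC^{2+2\alpha}}$ and the $\widetilde F$-difference. Then
\begin{equation*}
    \|u_\eps-u_0\|\le \tfrac12\|u_\eps-u_0\| + C\bigl(\triplenorm{\widetilde F_\eps-\widetilde F_0}_{\mathcal F,\mathrm{PDE}}+\triplenorm{\widetilde G_\eps-\widetilde G_0}_{\mathcal G}+\|g_\eps-g_0\|_{\cC^{2+2\alpha}}\bigr)\;.
\end{equation*}
Two obstacles need care. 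The first is the initial data: the linear problem for the difference has initial datum $g_\eps-g_0$ and boundary datum whose compatibility at $t=0$ must be checked. The second, which we expect to be the main technical point, is the time-H\"older estimate of the nonlinear boundary term in the anisotropic space $\cC^{(1+2\alpha)/2,1+2\alpha}$, and the smallness factor $T^{\alpha'}$ there. We would first try interpolating between $\nabla w\in\cC^{1/2+\alpha}_t$, which comes from $w\in\cC^{2+2\alpha}_\pa$, and the vanishing of $w-g$ at $t=0$.
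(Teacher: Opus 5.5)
Your proposal is correct and is essentially the paper's argument. You linearise at the initial datum, solve the frozen oblique-derivative problem by Schauder theory, run a contraction in $\yY_T^{g_\eps;R}$ with $R$ and $T$ depending only on $\mM_\eps$, and get stability by comparing the two fixed-point identities. The paper freezes the full linearisation $\aA_\eps$ rather than only the principal part, which makes no real difference. The initial-datum obstacle you flag is resolved by using one frozen operator for both solutions and splitting off $g_\eps-g_0$ to recover the $T^{\alpha}$ smallness; compatibility is automatic because $u_\eps-u_0$ is a genuine solution. This is more careful than the paper's comparison through $\Gamma_0(u_\eps)$, which tacitly assumes $g_\eps=g_0$.
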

\begin{proof}
The existence and uniqueness of the solution for fixed $\eps$ is essentially the same as \cite[Theorem~8.5.4]{Lun95}. The stability is also implicit in its proof. On the other hand, our setup is slightly different from that of \cite[Theorem~8.5.4]{Lun95} in terms of the boundary condition. So for the sake of completeness, we give a sketched argument here. 

Since \(\widetilde\oO\) is fixed, in what follows we use the abbreviation
\[
    \cC_{\pa,T}^{2+2\alpha}:=
    \cC_{\pa}^{2+2\alpha}([0,T]\times\overline{\widetilde{\oO}})\;,
\]
and use the analogous notation with \(T\) replaced by \(\tau\) or
\(\tau_\eps\). Define the operators $\widetilde\fF_\eps^t$ for $t \geq 0$ and $\widetilde\gG_\eps$ by
\begin{equation*}
    (\widetilde\fF_\eps^t h)(x) := \widetilde F_\eps \big( t, x, h(x), \nabla h(x), \nabla^2 h(x) \big)\;, \quad (\widetilde\gG_\eps h)(x) := \widetilde G_\eps \big( x, h(x), \nabla h(x) \big)\;.
\end{equation*}
Define the linearised operators $\aA_\eps$ and $\bB_\eps$ by
\begin{equation*}
    \begin{split}
    \aA_\eps h &:= \sum_{i,j=1}^{d} \d_{q_{ij}} \widetilde F_\eps \cdot \nabla_{ij}^2 h + \sum_{j=1}^{d} \d_{p_j} \widetilde F_\eps \cdot \nabla_j h + \d_u \widetilde F_\eps \cdot h\;,\\
    \bB_\eps h &:= \sum_{j=1}^{d} \d_{p_j} \widetilde G_\eps \cdot \nabla_j h + \d_u \widetilde G_\eps \cdot h\;,
    \end{split}
\end{equation*}
where the above derivatives of $\widetilde F_\eps$ and $\widetilde G_\eps$ are evaluated at $\big(0, x, g_\eps(x), \nabla g_\eps(x), \nabla^2 g_\eps (x) \big)$ and $\big( x, g_\eps(x), \nabla g_\eps (x) \big)$ respectively. 

For $v \in \yY_{T}^{g_\eps; R_\eps}$, let $\Gamma_\eps(v)$ denote the solution $w_\eps$ to the linear problem
\begin{equation}
\left\{
\begin{aligned}
\d_t w_\eps &= \aA_\eps w_\eps + \widetilde\fF_\eps^t(v) - \aA_\eps v\;,
&& t\in [0,T]\;, x \in \overline{\widetilde{\oO}}\;,\\
\bB_\eps w_\eps &= - \widetilde\gG_\eps (v) + \bB_\eps v\;,
&& t \in [0,T]\;, x\in\partial\widetilde\oO\;,\\
w_\eps(0)& = g_\eps,\;&& x\in\overline{\widetilde{\oO}}\;.
\end{aligned}\right.
\end{equation}
Note that
\begin{equation*}
    \|u-v\|_{\lL_t^\infty\cC_x^2([0,T]\times\overline{\widetilde{\oO}})} \lesssim T^\alpha \|u-v\|_{\cC_{\pa,T}^{2+2\alpha}}
\end{equation*}
for $u,v \in \yY_{T}^{g_\eps; R_\eps}$. In particular, this implies that after decreasing $T$ if necessary, $(v,\nabla v,\nabla^2v)$ remains in $B((\bar u,\bar p,\bar q),R_0)$ for every $v\in\yY_T^{g_\eps;R_\eps}$. Also, the map $\Gamma_\eps$ satisfies the bounds
\begin{equation*}
    \| \Gamma_\eps (v) \|_{\cC_{\pa,T}^{2+2\alpha}} \leq C(\lambda, \mM_\eps, c) \cdot \big( T^\theta ( 1 + R_\eps )^2 + 1 \big)\;,
\end{equation*}
and
\begin{equation*}
    \|\Gamma_\eps (u) - \Gamma_\eps (v)\|_{\cC_{\pa,T}^{2+2\alpha}} \leq C'(\lambda, \mM_\eps, c, R_\eps) \, T^\theta \|u-v\|_{\cC_{\pa,T}^{2+2\alpha}}
\end{equation*}
for some $\theta > 0$ and $C$ and $C'$ depending on relevant quantities. Thus, one can choose $R_\eps$ sufficiently large and $T = \tau_\eps$ sufficiently small such that
\begin{equation*}
    C(\lambda, \mM_\eps, c) \cdot \big( T^\theta ( 1 + R_\eps )^2 + 1 \big) < R_\eps\;, \quad C'(\lambda, \mM_\eps, c, R_\eps) \cdot T^\theta < \frac{1}{2}\;,
\end{equation*}
and $\Gamma_\eps$ is thus a $(1/2)$-contraction from $\yY_{T}^{g_\eps; R_\eps}$ into itself. Hence, there is a unique fixed point $u_\eps \in \yY_{T}^{g_\eps; R_\eps}$ of $\Gamma_\eps$, which is the solution to \eqref{e:nonlinear_PDE} by definition. The choices above also show that both $R_\eps$ and $\tau_\eps$ depend on $\eps$ only through $\mM_\eps$. It also follows directly from the above argument that the bound \eqref{eq:bound_nonlinear_PDE} holds. If $\mM_\eps$ is uniformly bounded, then we can take $\tau>0$ and $R>0$ independent of $\eps$ such that all the above hold.

As for stability, it suffices to note further that
\begin{equation*}
    \begin{split}
    \big\| \Gamma_\eps (u_\eps) &- \Gamma_0 (u_0) \big\|_{\cC_{\pa,\tau}^{2+2\alpha}} \leq \big\| \Gamma_\eps (u_\eps) - \Gamma_0 (u_\eps) \big\|_{\cC_{\pa,\tau}^{2+2\alpha}} + \big\| \Gamma_0 (u_\eps) - \Gamma_0 (u_0) \big\|_{\cC_{\pa,\tau}^{2+2\alpha}}\\
    &\lesssim_R |\!|\!| \widetilde F_\eps - \widetilde F_0 |\!|\!|_{\fF,\mathrm{PDE}} + |\!|\!| \widetilde G_\eps - \widetilde G_0 |\!|\!|_{\gG} + \|g_\eps - g_0\|_{\cC^{2+2\alpha}} + \tau^\theta \|u_\eps - u_0\|_{\cC_{\pa,\tau}^{2+2\alpha}}\;.
    \end{split}
\end{equation*}
Using that $u_\eps$ is the fixed point of $\Gamma_\eps$ and that $\tau$ can be chosen sufficiently small, the claim then follows. 
\end{proof}

\begin{lem} \label{lem:PDE_further_regularity}
Fix $k\in\NN$. If $\widetilde{F}_\eps$ further satisfies that
\begin{equation*}
\sup_{\eps \in [0,1]} \sup_{\substack{(u,p,q)\in B((\bar u,\bar p,\bar q),R_0)\\|\beta|\leq k+1}} \bigl\| D_{(u,p,q)}^{\beta} \widetilde{F}(\cdot\,,\cdot\,,u,p,q)\bigr\|_{\cC^{\alpha,k+2\alpha}([0,T]\times \overline{\widetilde{\oO}})} <+\infty\;,
\end{equation*}
and $g_\eps \in \cC^{k+2+2\alpha}$ uniformly in $\eps$, then for every open $\oO \Subset \widetilde\oO$, we have
\begin{equation*}
    \|\nabla^k u_\eps\|_{\cC_\pa^{2+2\alpha}([0,\tau] \times \overline{\oO})}\lesssim 1\;,
\end{equation*}
where the proportionality constant depends on the above norms of $\widetilde{F}_\eps$, $\widetilde{G}_\eps$ and $g_\eps$ as well as the set $\oO$. 
\end{lem}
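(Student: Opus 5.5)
The estimate is local, and I would obtain it by differentiating the equation \eqref{e:nonlinear_PDE} in space, inducting on $k$ and localising with cut-offs. The case $k=0$ is Lemma~\ref{lem:nonlinear_PDE}, in the form of the bound \eqref{eq:bound_nonlinear_PDE} with $\tau$ uniform in $\eps$. Assume now that $\|\nabla^{j}u_\eps\|_{\cC_\pa^{2+2\alpha}([0,\tau]\times\overline{\oO_j})}\lesssim1$ for every $j<k$, on a nested family $\oO\Subset\oO_{k-1}\Subset\dots\Subset\oO_0\Subset\widetilde\oO$. Because we only work on interior sets, the boundary operator $\widetilde G_\eps$ never enters, except through the base case.

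Since $\nabla^2 u_\eps$ is only $\cC_\pa^{2\alpha}$ at the start, I would not differentiate directly. Instead I would use difference quotients $\delta_h^e w(t,x)=(w(t,x+he)-w(t,x))/h$, as in the proof of interior Schauder regularity. Applying $\delta_h^e$ to $\nabla^{k-1}u_\eps$ and using the mean value form of $\widetilde F_\eps$, the function $w_h=\delta_h^e\nabla^{k-1}u_\eps$ satisfies a linear equation
\begin{equation*}
\partial_t w_h=a^h_{ij}\partial_{ij}w_h+b^h_i\partial_iw_h+c^hw_h+f^h\;.
\end{equation*}
The coefficients are
\begin{equation*}
a^h=\int_0^1\partial_q\widetilde F_\eps\big(t,x,\text{interpolated }(u,\nabla u,\nabla^2u)\big)\,\md s\;,
\end{equation*}
and similarly $b^h$ and $c^h$. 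The forcing $f^h$ collects difference quotients of $\widetilde F_\eps$ in $x$ together with lower order products of $\nabla^{j}u_\eps$, $j\le k+1$. By the hypothesis on $\widetilde F_\eps$, namely $k$ spatial derivatives in $\cC^{\alpha,k+2\alpha}$ and $k+1$ derivatives in $(u,p,q)$, and by the induction hypothesis, the coefficients and the forcing are bounded in $\cC_\pa^{2\alpha}([0,\tau]\times\overline{\oO_{k-1}})$ uniformly in $h$ and $\eps$. Ellipticity comes from \eqref{e:condition_F_elliptic}.

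Let $\chi$ be a cut-off equal to $1$ on $\oO_k$ and supported in $\oO_{k-1}$. Then $\chi w_h$ solves a linear parabolic problem with zero Dirichlet data. Its forcing involves $\nabla\chi\cdot\nabla w_h$ and $w_h\Delta\chi$, which are controlled by the induction hypothesis, since they contain at most $k+1$ derivatives of $u_\eps$. Its initial datum is $\chi\,\delta_h^e\nabla^{k-1}g_\eps$, which is bounded in $\cC^{2+2\alpha}$ because $g_\eps\in\cC^{k+2+2\alpha}$. The one term that is only borderline is $\nabla\chi\cdot\nabla w_h$, which is first order in $w_h$. It is controlled by interpolation, which gives $\|\nabla w\|_{\cC^{2\alpha}_\pa}\le\eta\|w\|_{\cC^{2+2\alpha}_\pa}+C_\eta\|w\|_{L^\infty}$, and then the small part is absorbed. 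The linear parabolic Schauder estimate on a smooth domain (\cite{Lun95}) then bounds $\|\chi w_h\|_{\cC_\pa^{2+2\alpha}}$ uniformly in $h$. Letting $h\to0$ shows that $\nabla^k u_\eps$ exists and satisfies the desired bound on $\oO_k$.

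I expect the main obstacle to be the time regularity in the parabolic Hölder norm. The hypothesis gives $\widetilde F_\eps$ only $\cC^\alpha$ in time, that is $\cC^{2\alpha}$ parabolically, which is exactly the exponent that Schauder theory needs. The difficulty is that the composite coefficients $\partial_q\widetilde F_\eps(t,x,u,\nabla u,\nabla^2u)$ must be checked to have parabolic $\cC^{2\alpha}$ norm bounded using only $\nabla^2u\in\cC^{2\alpha}_\pa$. This works because $\widetilde F_\eps$ is $\cC^1$ in $q$ and the composition of Hölder functions preserves the exponent, but the constants must be tracked carefully.

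A second, minor point is compatibility at $t=0$ for the localised problem. Because $\chi$ vanishes near the lateral boundary, only the corner condition at the boundary of the support matters there, and it holds trivially.
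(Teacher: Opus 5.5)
Your argument is correct, but it is not the paper's route, because the paper gives no argument of its own. It cites \cite[Proposition~8.5.6]{Lun95} for $k=1$ and says the general case is similar. You instead run an explicit induction on $k$:
\begin{itemize}
\item take difference quotients of $\nabla^{k-1}u_\eps$, which for fixed $h$ already lie in $\cC_\pa^{2+2\alpha}$, so the a priori estimate applies;
\item localise with a cut-off to a linear Dirichlet problem whose data are compactly supported, so compatibility at $t=0$ is trivial;
\item apply the linear interior Schauder estimate and let $h\to0$.
\end{itemize}
The citation is short and rests on a fully worked-out result for $k=1$, but it leaves the higher-$k$ step and the exact hypotheses implicit. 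Your route is self-contained and shows exactly what each step uses: $k+1$ derivatives in $(u,p,q)$, $k$ derivatives in $x$, and $g_\eps\in\cC^{k+2+2\alpha}$. It also shows that $\widetilde G_\eps$ enters only through the base case and the choice of $\tau$.

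Two points to tighten.

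\emph{Reading of the hypothesis.} You claim $f^h$ is bounded in $\cC_\pa^{2\alpha}$ uniformly in $h$. This needs the mixed derivatives $\nabla_x^jD^\beta_{(u,p,q)}\widetilde F_\eps$, $j\le k$, to be $\alpha$-H\"older in $t$, so $\cC^{\alpha,k+2\alpha}$ must be read in this strong sense. If it only meant $\cC^\alpha$ in $t$ and $\cC^{k+2\alpha}$ in $x$ separately, interpolation would give $\nabla_x\widetilde F_\eps$ a strictly worse time exponent, and the estimate would fail. Already for $\partial_tu=\Delta u+g$, the bound $\nabla u\in\cC_\pa^{2+2\alpha}$ would force $\nabla g=\partial_t\nabla u-\Delta\nabla u\in\cC_\pa^{2\alpha}$. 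For example, $g=\lambda^{-(1+2\alpha)}\sin(\lambda x_1)\psi(\lambda^{(1+2\alpha)/\alpha}t)$ has bounded separate norms, yet $\nabla g$ has $\cC^\alpha_t$-seminorm of order $\lambda$. The strong reading is exactly what \eqref{e:condition_F_RPDE} supplies in the application, so say so explicitly.

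\emph{The commutator term.} The term $\nabla\chi\cdot\nabla w_h$ is not borderline, as you yourself note one sentence earlier. Since $\nabla w_h=\delta_h^e\nabla^ku_\eps$ and $\nabla^{k+1}u_\eps\in\cC_\pa^{2\alpha}(\overline{\oO_{k-1}})$ by the inductive hypothesis, it is bounded in $\cC_\pa^{2\alpha}$ uniformly in $h$. Drop the interpolation-and-absorption step. As written it would not close anyway: the $\cC_\pa^{2+2\alpha}$ norm of $w_h$ on $\operatorname{supp}\nabla\chi$ is not controlled by that of $\chi w_h$.
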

\begin{proof}
    The case $k=1$ is shown in \cite[Proposition~8.5.6]{Lun95}. The proof for arbitrary $k$ is similar. So we omit the details. 
\end{proof}

\subsection{Proof of Theorem~\ref{thm:stability}}

We are now ready to give the proof of the main well-posedness theorem for the general RPDE. 

\begin{proof}[Proof of Theorem~\ref{thm:stability}]
We only give details for the stability assuming the uniform boundedness of $F_\eps$ and $G_\eps$. The bounds for fixed $\eps$ are simpler so we omit it. 

Let $v_\eps$ be the solution to \eqref{e:RPDE_transformed} with $\widetilde{F}_\eps$ and $\widetilde{G}_\eps$ transformed from $F_\eps$ and $G_\eps$. We first assume that the transformed solutions $v_\eps$ satisfy the additional regularity hypotheses in Lemma~\ref{lem:transform}; this assumption will be verified at the end of the proof. Then by Lemma~\ref{lem:transform}, we have
\begin{equation*}
    \|\rho_\eps - \rho_0\|_{\lL_t^\infty \cC_x^2([0,\tau]\times\overline{\oO})} \lesssim_{\|\W^\eps\|, \, \|v_\eps\|_{\cC_{\pa,T}^{2+2\alpha}}} d_{\alpha}(\W^\eps, \W) + \|v_\eps - v_0\|_{\lL_t^\infty \cC_x^2([0,\tau]\times\overline{\widetilde{\oO}})} \;.
\end{equation*}

We hope to apply Lemma~\ref{lem:nonlinear_PDE} to say $\|v_\eps\|$ is uniformly bounded and to control $\|v_\eps - v_0\|$. But the boundary term in Lemma~\ref{lem:nonlinear_PDE} does not depend on $t$ while the boundary term for the solution to \eqref{e:RPDE_transformed} has the form
\begin{equation*}
    \widetilde{G}_\eps(t; x,u,p) = G_\eps \big( \Theta_t^\eps (\bm\theta) \big) \;,
\end{equation*}
which in general depends on $t$. To this end, we use the factorization condition \eqref{e:condition_GH_factorization} to say that these two boundary conditions actually coincide. To see this, let $\widehat{v}_\eps$ be the solution to \eqref{e:nonlinear_PDE} in Lemma~\ref{lem:nonlinear_PDE} with $\widetilde{F}_\eps$ transformed from $F_\eps$, but the boundary term being $G_\eps$ itself. This implies the solution $\widehat{v}_\eps$ satisfies the boundary condition
\begin{equation*}
    G_\eps \big( x, \widehat{v}_\eps(t,x), \nabla \widehat{v}_\eps(t,x) \big) = 0 \quad \text{on} \; \d \widetilde\oO\;.
\end{equation*}
Now, let $Z_\eps(t) := G_\eps(\Theta_t^\eps(\bm\theta))$. The rough chain rule and the characteristic system give 
\begin{equation*}
    \md Z_\eps(t)=\left[-D_xG_\eps H_p^\sft + D_u G_\eps (H-H_p \, P)^\sft +D_pG_\eps (H_x^\sft+PH_u^\sft )\right] (\Theta_t^\eps(\bm\theta)) \,\md\W_t\;.
\end{equation*}
By the factorization assumption \eqref{e:condition_GH_factorization}, we see $Z_\eps$ satisfies the linear equation
\begin{equation*}
    \md Z_\eps(t)=Z_\eps(t)L_\eps(\Theta_t^\eps(\bm\theta))\,\md\W_t\;, \quad Z_\eps(0) = G_\eps(\bm\theta)\;.
\end{equation*}
If $G_\eps(\bm\theta)=0$, then $Z_\eps(t)\equiv0$. Taking $\bm\theta = (x, \widehat{v}_\eps(t,x), \nabla \widehat{v}_\eps(t,x))$ gives
\begin{equation*}
    \widetilde{G}_\eps \big(t, \bm\theta \big) = G_\eps \big( \Theta_t^\eps (\bm\theta) \big) = 0\;.
\end{equation*}
This shows that $\widehat{v}_\eps$ also satisfies the boundary condition in \eqref{e:RPDE_transformed} given by $\widetilde{G}_\eps$. Hence by uniqueness, we have $v_\eps = \widehat{v}_\eps$. 

We next check ellipticity. We have the expression
\begin{equation}\label{e:expression_tildeF}
    \widetilde{F}_\eps(t, \bm\theta, q) = F_\eps \big( t, \Theta_{t}^\eps (\bm\theta), \; Q_t^\eps(\bm\theta; q) \big) \cdot \exp \Big( -\int_0^t (D_u H)^\sft\big(\Theta_s^\eps(\bm\theta)\big)\,\md\W_s^\eps \Big)\;.
\end{equation} 
Write $M_t^\eps := D_p P_t^\eps - Q_t^\eps D_p X_t^\eps$. Differentiating $\widetilde{F}_\eps$ in $q \in \RR^{d \times d}$, we get
\begin{equation*}
    \d_q \widetilde{F}_\eps =\exp \Big( -\int_0^t (D_u H)^\sft\big(\Theta_s^\eps(\bm\theta)\big)\,\md\W_s^\eps \Big) (M_t^\eps)^\sft \, \d_q F_\eps\big(t,\Theta_t^\eps(\bm\theta), Q_t^\eps\big) \, \big( D_x X_t^\eps + D_u X_t^\eps \, p^\sft + D_p X_t^\eps \, q \big)^{-\sft}\;.
\end{equation*}
Here \(\partial_qF_\eps\) and \(\partial_q\widetilde F_\eps\) are understood as matrices whose $(i,j)$-th entries are partial derivatives with respect to $q_{ij}$. 

Choose a bounded open set $\dD_3'\Subset\dD_3$ containing the range of $(\rho^0,\nabla\rho^0,\nabla^2\rho^0)$ on $\overline{\widetilde\oO}$. Since $\widetilde\oO\Subset\widehat\oO$ and $\overline{\dD_3'}\subset\dD_3$, the rough flow estimate Lemma~\ref{lem:RDE_stability} allows us, after decreasing $\tau$ if necessary, to ensure uniformly in $\eps$ that
\[
    X_t^\eps(\bm\theta)\in\widehat\oO\;,\qquad
    \big(U_t^\eps(\bm\theta),P_t^\eps(\bm\theta),Q_t^\eps(\bm\theta,q)\big)\in\dD_3
\]
for $t\in[0,\tau]$, $x\in\overline{\widetilde\oO}$, and $(u,p,q)\in\dD_3'$. Thus the ellipticity assumption \eqref{e:condition_F_elliptic_RPDE} applies to \(  \d_qF_\eps\big(t,\Theta_t^\eps(\bm\theta),Q_t^\eps(\bm\theta,q)\big)\). Moreover, by the rough flow estimate Lemma~\ref{lem:RDE_stability}, we also have
\begin{align*}
&\left|\exp\!\Big(-\int_0^t(D_uH)^\sft(\Theta_s^\eps(\bm\theta))\,\md\W_s^\eps\Big)-1\right|
+\|M_t^\eps-\id\|\\
&\qquad
+\left\|\big(D_xX_t^\eps+D_uX_t^\eps p^\sft+D_pX_t^\eps q\big)^{-\sft}-\id\right\|
\lesssim t^\theta
\end{align*}
uniformly on the above region. Consequently, after decreasing $\tau$ once more if necessary, we obtain
\[
    \eta^\sft \partial_q\widetilde F_\eps(t,\bm\theta,q)\eta
    \geq \frac{\lambda}{2}|\eta|^2
\]
uniformly for $t\in[0,\tau]$, $x\in\overline{\widetilde\oO}$, $(u,p,q)\in\dD_3'$, $\eta\in\RR^d$, and $\eps\in[0,1]$.

We can then apply Lemma~\ref{lem:nonlinear_PDE} to obtain $\|v_\eps\|_{\cC_{\pa, \tau}^{2+2\alpha}}$ is uniformly bounded and
\begin{equation*}
    \|v_\eps - v_0\|_{\cC_{\pa, \tau}^{2+2\alpha}} \lesssim |\!|\!| \widetilde{F}_\eps - \widetilde{F}_0 |\!|\!| + |\!|\!| G_\eps - G_0 |\!|\!|\;.
\end{equation*}
The proportionality constant depends on the uniform bounds of $|\!|\!| \widetilde{F}_\eps |\!|\!|$ as well as ellipticity. Moreover, by Lemma~\ref{lem:PDE_further_regularity} with $k=2$, for every open $\oO\Subset\widetilde\oO$, we have $\nabla v_\eps,\nabla^2 v_\eps\in\cC^{2+2\alpha}_\pa\bigl([0,\tau]\times \overline{\oO}\bigr)$, with these norms also uniformly bounded in $\eps$. These estimates give the regularity required in Lemma~\ref{lem:transform}, and hence justify the assumption made at the beginning of the proof.

It then remains to control $|\!|\!| \widetilde{F}_\eps - \widetilde{F}_0 |\!|\!|$ in terms of relevant norms of $F_\eps - F_0$. By the expression \eqref{e:expression_tildeF}, the derivatives of $\widetilde{F}_\eps$ with respect to $(\bm\theta, q) = (x,u,p,q)$ up to order three are linear combinations of products of derivatives of $F_\eps$ with respect to $(x,u,p,q)$ up to order three and derivatives of $\Theta_{t}^\eps$ with respect to $\bm\theta = (x,u,p)$ up to order four. Also, for $\alpha < \frac{1}{2}$, the $\alpha$-H\"older norm of $\widetilde{F}_\eps$ in $t$ is controlled by the $\alpha$-H\"older norm of $F_\eps$ together with the first order derivatives of $F_\eps$ in terms of other variables. This gives
\begin{equation*}
    |\!|\!| \widetilde{F}_\eps - \widetilde{F}_0 |\!|\!|_{\text{PDE}} \lesssim |\!|\!| F_\eps - F_0 |\!|\!|_{\text{RPDE}} + \sup_{|\beta| \leq 4} \|D^\beta (\Theta_{t}^\eps - \Theta_{t}^0)\|  \lesssim |\!|\!| F_\eps - F_0 |\!|\!|_{\text{RPDE}} + d_\alpha (\W^\eps, \W)\;,
\end{equation*}
where the second bound follows from Lemma~\ref{lem:RDE_stability}. This completes the proof. 
\end{proof}

\appendix

\section{Operations with rough paths}

We give some standard lemmas on operations with controlled rough paths. These are well known and we give statements without proofs. 

\begin{lem}\label{lem:composition_1}
    Suppose $Y \in \cC_{x,loc}^2$ and $Y' \in \cC_{x,loc}^1$, and
    \begin{equation*}
        (Y(t;x),Y'(t;x))\in\lL_{x,loc}^\infty\dD_W^{2\alpha}\;, \qquad (Z(t),Z'(t))\in\dD_W^{2\alpha}\;.
    \end{equation*}
    Then we have
    \begin{equation*}
        \left(Y(t;Z(t)),Y'(t;Z(t)) + \nabla^\sft Y(t;Z(t))Z'(t)\right)\in\dD_W^{2\alpha}\;.
    \end{equation*}
\end{lem}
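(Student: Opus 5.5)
The plan is to verify the controlled-path expansion directly. Write $Z_{s,t}:=Z_t-Z_s$ and $W_{s,t}:=W_t-W_s$, and split the increment of the composition as
\begin{equation*}
Y(t;Z_t)-Y(s;Z_s) = \big[Y(t;Z_t)-Y(s;Z_t)\big] + \big[Y(s;Z_t)-Y(s;Z_s)\big]\;.
\end{equation*}
The first bracket is the time increment of $Y$ at the frozen point $x=Z_t$. By the assumption $(Y,Y')\in\lL_{x,loc}^\infty\dD_W^{2\alpha}$, it equals $Y'(s;Z_t)W_{s,t}+R^Y_{s,t}(Z_t)$, with $|R^Y_{s,t}(Z_t)|\lesssim|t-s|^{2\alpha}$ uniformly over the compact set containing the range of $Z$. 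We then replace $Y'(s;Z_t)$ by $Y'(s;Z_s)$. Since $Y'\in\cC^1_{x,loc}$ and $Z$ is $\alpha$-H\"older, the error is at most $\|\nabla Y'\|_\infty|Z_{s,t}||W_{s,t}|\lesssim|t-s|^{2\alpha}$.

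The second bracket is a purely spatial increment at the fixed time $s$. Taylor's formula with $Y(s;\cdot)\in\cC^2$ gives
\begin{equation*}
\nabla^\sft Y(s;Z_s)\,Z_{s,t}+\oO\big(|Z_{s,t}|^2\big)\;.
\end{equation*}
Substituting $Z_{s,t}=Z'_sW_{s,t}+R^Z_{s,t}$ turns this into $\nabla^\sft Y(s;Z_s)Z'_sW_{s,t}$ plus an error of order $|t-s|^{2\alpha}$, controlled by $\|\nabla Y\|_\infty\|R^Z\|_{2\alpha}$ and $\|\nabla^2Y\|_\infty\|Z\|_\alpha^2$. Collecting both brackets, the remainder of the composed path relative to the proposed Gubinelli derivative $Y'(s;Z_s)+\nabla^\sft Y(s;Z_s)Z'_s$ is $\oO(|t-s|^{2\alpha})$. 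Its constant depends on the local $\dD_W^{2\alpha}$ bounds of $Y$, the $\cC^2$ norm of $Y$, the $\cC^1$ norm of $Y'$, and $\|(Z,Z')\|_{\dD_W^{2\alpha}}$.

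It remains to show that the proposed derivative is itself $\alpha$-H\"older in $t$. The term $Y'(t;Z_t)$ is handled by the same frozen-point splitting, using that $Y'$ is $\alpha$-H\"older in time uniformly in $x$ and Lipschitz in $x$. For $\nabla^\sft Y(t;Z_t)Z'_t$, the factor $Z'$ is $\alpha$-H\"older by assumption, so the key step is to show $t\mapsto\nabla Y(t;Z_t)$ is $\alpha$-H\"older. Spatial Lipschitz continuity comes from $Y\in\cC^2$. Temporal H\"older continuity of $\nabla Y(\cdot;x)$ is not literally among the hypotheses, and I expect this to be the main obstacle. I will first read the hypothesis $Y\in\cC^2_{x,loc}$ as uniform-in-time spatial regularity. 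Then I will argue as follows: $Y(\cdot;x)$ is uniformly $\alpha$-H\"older and $\nabla^2Y$ is bounded, so interpolation gives a H\"older exponent for $\nabla Y$ in time, which is at least smaller than $\alpha$. If the full exponent $\alpha$ is needed, I will instead assume, as in all applications in the paper, that $\nabla Y(\cdot;x)$ is $\alpha$-H\"older locally uniformly in $x$. This does hold there, since $\nabla^k\rho$ are controlled paths for $k\leq2$.
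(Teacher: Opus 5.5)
Your treatment of the remainder is correct and is the standard argument; the paper states this lemma without proof. The frozen-point time increment plus fixed-time Taylor expansion gives a remainder of order $|t-s|^{2\alpha}$ with the constants you list, and $t\mapsto Y'(t;Z_t)$ is $\alpha$-H\"older as you say.

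The gap is the last step. Membership in $\dD_W^{2\alpha}$ requires the Gubinelli derivative to be $\alpha$-H\"older. So you need $|\nabla Y(t;x)-\nabla Y(s;x)|\lesssim|t-s|^\alpha$ locally uniformly in $x$. Interpolating the increment $Y(t;\cdot)-Y(s;\cdot)$, which has size $|t-s|^\alpha$, against the bound on $\nabla^2 Y$ only yields $|t-s|^{\alpha/2}$, and that is not enough. Your fallback of assuming H\"older continuity of $\nabla Y$ in time proves a weaker lemma than the one stated. The extra hypothesis is unnecessary.

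The missing idea is to interpolate the remainder $R^Y_{s,t}$ rather than the increment. Fix a compact $K$ containing the range of $Z$, and $\delta>0$ such that the closed $\delta$-neighbourhood $K_\delta$ lies in the domain. For $x\in K$, a unit vector $e$ and $0<h\leq\delta$, writing $R^Y_{s,t}(x+he)-R^Y_{s,t}(x)$ as an integral of $e\cdot\nabla R^Y_{s,t}$ along the segment gives
\begin{equation*}
|e\cdot\nabla R^Y_{s,t}(x)|\leq \frac{2}{h}\sup_{K_\delta}|R^Y_{s,t}| + \sup_{|y-x|\leq h}\big|\nabla R^Y_{s,t}(y)-\nabla R^Y_{s,t}(x)\big|\;.
\end{equation*}
Since $\nabla R^Y_{s,t}=\nabla Y(t;\cdot)-\nabla Y(s;\cdot)-\nabla Y'(s;\cdot)\,W_{s,t}$, the oscillation term is at most $2\|\nabla^2Y\|_\infty h+2\|\nabla Y'\|_\infty\|W\|_\alpha|t-s|^\alpha$. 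This uses only boundedness of $\nabla^2 Y$ and $\nabla Y'$. The first term is $\lesssim|t-s|^{2\alpha}/h$ by the local uniform bound on $\|R^Y\|_{2\alpha}$.

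Choose $h=|t-s|^\alpha$; the case $|t-s|^\alpha>\delta$ is trivial by boundedness of $\nabla Y$. This gives $|\nabla R^Y_{s,t}(x)|\lesssim|t-s|^\alpha$, hence $|\nabla Y(t;x)-\nabla Y(s;x)|\leq|\nabla R^Y_{s,t}(x)|+|\nabla Y'(s;x)|\,|W_{s,t}|\lesssim|t-s|^\alpha$ uniformly on $K$. With this, your splitting shows $t\mapsto\nabla^\sft Y(t;Z_t)Z'_t$ is $\alpha$-H\"older, and the lemma follows under exactly its stated hypotheses.
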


\begin{lem}\label{lem:composition_2}
    Suppose $Y \in \cC_{x,loc}^3$, and there exist $A \in \cC_{x,loc}^2$, $A' \in \cC_{x,loc}^1$, $B \in \cC_t^0 \cC_{x,loc}^0$ such that
    \begin{equation*}
    \begin{aligned}
        Y(t;x) &= y_0(x)+\int_0^t (A(s;x),A'(s;x))\,\md \W_s +\int_0^t B(s;x)\,\md s\;,\\ Z(t) &= z_0+\int_0^t (C(s),C'(s))\,\md \W_s+\int_0^t D(s)\,\md s\;.
    \end{aligned}
    \end{equation*}
    Suppose further $\nabla^\sft Y$ is also controlled by $W$ with $(\nabla^\sft Y)' = \nabla^\sft A$. Then we have
    \begin{equation*}
        Y(t;Z(t)) = y_0(z_0)+ \int_0^t (E(s),E'(s))\,\md \W_s +\int_0^t F(s)\,\md s\;,
    \end{equation*}
    where
    \begin{equation*}
        \begin{split}
        E(s) &= A \big(s; Z(s)\big) + (\nabla^\sft Y)\big(s;Z(s)\big)C(s)\;, \qquad F(s) = B\big( s; Z(s) \big) + (\nabla^\sft Y)\big(s; Z(s)\big) D(s)\;,\\
        E'(s) &= A' +  2 \, \Sym \big( (\nabla^\sft A) \, C \big) + (\nabla^\sft Y) \, C' + \langle \nabla^2 Y, \; C, \, C \rangle\;.
        \end{split}
    \end{equation*}
    Here, the spacetime dependent quantities on the right hand side of the expression for $E'(s)$ are evaluated at $(s, Z(s))$. 
\end{lem}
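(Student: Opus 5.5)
The plan is to reduce the rough It\^o--Wentzell formula of Lemma~\ref{lem:composition_2} to local two-parameter expansions and then identify the integrand, its Gubinelli derivative and the drift by matching terms up to order $|t-s|^{3\alpha}$. By the standard uniqueness of such expansions (a controlled path plus a drift is determined by its local increments up to an $o(|t-s|)$ remainder), the identity for $Y(t;Z(t))$ follows once we show that for $s\le t$
\begin{equation*}
Y(t;Z_t)-Y(s;Z_s) = E(s)W_{s,t} + \langle E'(s),\WW_{s,t}\rangle + F(s)(t-s) + \oO(|t-s|^{3\alpha}) ,
\end{equation*}
with the remainder uniform in $s,t$. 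The sewing lemma then produces the claimed rough-integral representation.

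To obtain the expansion, split the increment as
\begin{equation*}
\big[Y(t;Z_t)-Y(s;Z_t)\big] + \big[Y(s;Z_t)-Y(s;Z_s)\big] .
\end{equation*}

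For the second bracket, Taylor expand $Y(s;\cdot)$ to second order around $Z_s$. This uses $Y\in\cC^3_{x,loc}$, which makes the third-order remainder $\oO(|Z_{s,t}|^3)=\oO(|t-s|^{3\alpha})$. Then insert the expansion of $Z_{s,t}$, namely
\begin{equation*}
Z_{s,t} = C_sW_{s,t} + C'_s\WW_{s,t} + D_s(t-s) + \oO(|t-s|^{3\alpha}) .
\end{equation*}
The linear term gives $\nabla^\sft Y(s;Z_s)\big(C_sW_{s,t}+C'_s\WW_{s,t}+D_s(t-s)\big)$. The quadratic term gives $\tfrac12\langle\nabla^2Y, C_sW_{s,t},C_sW_{s,t}\rangle$. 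Since $\W$ is geometric, $\Sym\WW_{s,t}=\tfrac12 W_{s,t}\otimes W_{s,t}$, so this equals $\langle \nabla^2 Y,C,C\rangle$ paired with $\WW_{s,t}$ (interpreting the pairing symmetrically).

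For the first bracket, evaluate at the frozen point $Z_t$ and write $Z_t=Z_s+Z_{s,t}$. Use the equation for $Y(\cdot;x)$ at that point:
\begin{equation*}
A(s;Z_t)W_{s,t} + A'(s;Z_t)\WW_{s,t} + B(s;Z_t)(t-s) + \oO(|t-s|^{3\alpha}) .
\end{equation*}
The remainder must be uniform in $x$ on compacts, which follows from the $\lL^\infty_{x,loc}$ controlledness of $(A,A')$ and the spatial regularity of $A,A'$. Expanding $A(s;Z_t)=A(s;Z_s)+\nabla^\sft A(s;Z_s)C_sW_{s,t}+\oO(|t-s|^{2\alpha})$ produces the cross term $\nabla^\sft A\,C\,W_{s,t}\otimes W_{s,t}$, which again becomes $2\Sym(\nabla^\sft A\,C)$ paired with $\WW_{s,t}$ by geometricity. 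In the remaining terms, replacing $Z_t$ by $Z_s$ costs only higher order.

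Collecting the terms gives exactly $E$, $E'$ and $F$. The hypothesis $(\nabla^\sft Y)'=\nabla^\sft A$ is what guarantees that $E$ is itself controlled with derivative $E'$, by Lemma~\ref{lem:composition_1} applied to $A(s;Z_s)$ and $\nabla^\sft Y(s;Z_s)C_s$.

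The main obstacle is bookkeeping the uniformity of the remainder in the frozen spatial argument. The point $Z_t$ depends on $t$, so the first bracket needs the expansion of $Y(\cdot;x)$ with remainder bounded uniformly over $x$ in a compact neighbourhood of the range of $Z$, together with continuity of $A'$ and $B$ in $x$. I would handle this first, by proving a uniform-in-$x$ version of the local expansion via the sewing-lemma estimate with $\sup_{x\in K}$ norms.
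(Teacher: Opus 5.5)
Your proposal is correct. The paper gives no proof of this lemma (it is listed in the appendix among standard facts stated without proof). Your argument is the standard route to this rough It\^o--Wentzell formula: take the spatial increment at time $s$ and the time increment at the point $Z_t$, Taylor expand in space, convert $W_{s,t}\otimes W_{s,t}$ into $2\,\Sym\WW_{s,t}$ using that $\W$ is geometric, and conclude by the uniqueness part of the sewing lemma. Geometricity is left implicit in the statement but is genuinely needed, since otherwise bracket terms appear.

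A few points of precision:
\begin{itemize}
\item When $B$ and $D$ are merely continuous, the drift contributions are only $o(|t-s|)$ uniformly, not $\oO(|t-s|^{3\alpha})$. You should therefore compare $Y(t;Z_t)-Y(s;Z_s)$ with $E(s)W_{s,t}+\langle E'(s),\WW_{s,t}\rangle+\int_s^tF(r)\,\md r$. This suffices, because an additive two-parameter function that is uniformly $o(|t-s|)$ vanishes.
\item The uniformity in the frozen point, which you flag as the main obstacle, is immediate. The sewing bound depends on $x$ only through $\|(A(\cdot;x),A'(\cdot;x))\|_{\dD_W^{2\alpha}}$.
\item Your splitting has the nice feature that the expansion itself never uses time regularity of $\nabla Y$. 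The cross term $2\,\Sym(\nabla^\sft A\,C)$ comes entirely from expanding $A(s;Z_t)$ around $Z_s$ and symmetrising.
\item The hypothesis $(\nabla^\sft Y)'=\nabla^\sft A$ enters only to certify $(E,E')\in\dD_W^{2\alpha}$. There you should verify that Lemma~\ref{lem:composition_1} and the product rule reproduce exactly your $E'$, including its antisymmetric part. This does hold: $A(s;Z_s)$ contributes $\nabla^\sft A\,C$, and $\nabla^\sft Y(s;Z_s)\,C(s)$ contributes its transpose.
\item Strictly speaking, the $\alpha$-H\"older continuity of $E'$ also needs $s\mapsto\nabla^2Y(s;x)$ to be $\alpha$-H\"older, locally uniformly in $x$. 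This is implicit in the appendix rather than stated.
\end{itemize}
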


\begin{lem}\label{lem:inverse_rough_path}
    Suppose $Y \in \cC_{x,loc}^3$, and there exists $(A, A') \in \dD_W^{2\alpha}$ with $A \in \cC_{x,loc}^2$, $A' \in \cC_{x,loc}^1$ and $B \in \cC_{x,loc}^1 \cap \cC_t^\gamma$ such that
    \begin{equation*}
        Y(t;x) - Y(s;x) = A(s;x) (W_t - W_s) + A'(s;x) \WW_{s,t} + B(s;x)(t-s) +\oO((t-s)^{1+\gamma})\;,
    \end{equation*}
    and that $Y(t;\cdot)$ is injective with $\sigma_{\min}(\nabla^
    \sft Y(t;x)) \geq c > 0$. Then $Z(t;\cdot):= Y(t;\cdot)^{-1}$ satisfies
    \begin{equation*}
        Z(t;x) -Z(s;x) = C(s;x) (W_t - W_s) + C'(s;x) \WW_{s,t} + D(s;x)(t-s) +\oO((t-s)^{1+\gamma})\;,
    \end{equation*}
    where 
    \begin{equation*}
        \begin{split}
        C(s) &= - (\nabla^\sft Y)^{-1} A\;, \qquad D(s) = -(\nabla^\sft Y)^{-1} B\;,\\
        C'(s) &= - (\nabla^\sft Y)^{-1} \bigg[ A' - 2 \, \Sym \Big( \big(\nabla^\sft A\big) \big( (\nabla^\sft Y)^{-1} A \big) \Big)\\
        &\phantom{11111}+ \langle \nabla^2 Y, \; (\nabla^\sft Y)^{-1} A, \, (\nabla^\sft Y)^{-1} A  \rangle \bigg]\;.
        \end{split}
    \end{equation*}
    The right-hand sides above are all evaluated at $s$ (and $(s; Z(s))$ for spacetime dependent quantities). 
\end{lem}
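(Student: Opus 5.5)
This is a standard statement about inverting a flow of diffeomorphisms driven by a rough path, so the plan is to derive the expansion of $Z$ from the identity
\[
Y(t;Z(t;x))=x\qquad\text{for all } t,
\]
and then read off the coefficients order by order in the increments.

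The first step is to establish a priori regularity of $Z$. Since $\sigma_{\min}(\nabla^\sft Y)\geq c$, the inverse function theorem shows that $Z(t;\cdot)$ is $\cC^3_{loc}$ with $\nabla^\sft Z=(\nabla^\sft Y)^{-1}\circ Z$, uniformly in $t$. Next I would show that $t\mapsto Z(t;x)$ is $\alpha$-H\"older. Apply the mean value theorem to
\[
0=Y(t;Z_t)-Y(s;Z_s)=\big[Y(t;Z_t)-Y(s;Z_t)\big]+\big[Y(s;Z_t)-Y(s;Z_s)\big].
\]
The first bracket is $\oO(|t-s|^\alpha)$ uniformly in space, using local boundedness of $A,A',B$. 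The second bracket is bounded below by $c|Z_t-Z_s|$, after restricting to a small spatial ball where $\nabla^\sft Y(s;\cdot)$ stays invertible. Together these give $|Z_t-Z_s|\lesssim|t-s|^\alpha$.

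The second step is to expand the identity to second order. Writing $\delta Z=Z_t-Z_s$, Taylor expansion in space gives
\[
0=\big[Y(t;Z_t)-Y(s;Z_t)\big]+\nabla^\sft Y(s;Z_s)\,\delta Z+\tfrac12\langle\nabla^2Y(s;Z_s),\delta Z,\delta Z\rangle+\oO(|\delta Z|^3).
\]
For the first bracket I use the hypothesis at the point $Z_t$, and then Taylor-expand $A(s;Z_t)$ and $A'(s;Z_t)$ around $Z_s$. This yields
\[
A\,\delta W+(\nabla^\sft A)\,\delta Z\,\delta W+A'\,\WW_{s,t}+B\,(t-s)+\oO(|t-s|^{3\alpha\wedge(1+\gamma)}).
\]
At first order this gives $\delta Z=-(\nabla^\sft Y)^{-1}A\,\delta W+\oO(|t-s|^{2\alpha})$, so $C=-(\nabla^\sft Y)^{-1}A$ and $Z$ is controlled. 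Substituting this first-order expansion into the second-order terms gives:
\begin{itemize}
\item $(\nabla^\sft A)C\,\delta W\otimes\delta W$ from the cross term,
\item $\tfrac12\langle\nabla^2Y,C\delta W,C\delta W\rangle$ from the spatial Hessian.
\end{itemize}
For the quadratic terms I use geometricity of the rough path, $\Sym(\WW_{s,t})=\tfrac12\delta W\otimes\delta W$, to rewrite $\delta W\otimes\delta W=2\Sym\WW_{s,t}$. This lets every quadratic term be written as a pairing with $\WW_{s,t}$. Solving the resulting linear equation for $\delta Z$, i.e.\ multiplying by $-(\nabla^\sft Y)^{-1}$, gives $C'$ as stated and $D=-(\nabla^\sft Y)^{-1}B$.

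The third step is to check the remainder, which I expect to be the main obstacle. I need all dropped terms to be $\oO(|t-s|^{1+\gamma'})$. These include cubic Taylor terms of size $|t-s|^{3\alpha}$ and products such as $\delta Z\cdot(t-s)$. This requires $3\alpha>1$, which holds since $\alpha>\tfrac13$. The conclusion is therefore obtained with $\gamma$ replaced by $\min(\gamma,3\alpha-1,\alpha)$. The uniformity in $x$ over compacts follows from the local $\cC^3$ and $\cC^2$ bounds on $Y$ and $A$, combined with the fact that $Z$ maps compacts into compacts on short time intervals.

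One further point needs care: the cross-term coefficient $2\Sym\big((\nabla^\sft A)C\big)$ must be matched with the sign convention in the stated formula. After substituting $C=-(\nabla^\sft Y)^{-1}A$, this is exactly where the minus sign inside the bracket comes from.
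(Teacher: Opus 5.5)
The paper does not prove this lemma; it is listed in the appendix as standard. So there is no argument to compare against. Your direct expansion of $Y(t;Z(t;x))=x$ is the natural proof, and your algebra is right: it yields $C$, $D$ and $C'$ with exactly the stated signs. It is the same computation as setting $E=E'=F=0$ in Lemma~\ref{lem:composition_2}, with the advantage that you never presuppose that $Z$ is controlled. Your remainder bookkeeping uses exactly the stated regularity: $Y\in\cC^3$, $A\in\cC^2$, $A'\in\cC^1$, and $B\in\cC^1$ in $x$.

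One step needs repair. In your a priori H\"older bound, the lower bound $|Y(s;Z_t)-Y(s;Z_s)|\gtrsim|Z_t-Z_s|$ only holds once you know $Z_t$ lies in a small ball around $Z_s$ on which $\nabla^\sft Y(s;\cdot)$ is nearly constant. A pointwise bound $\sigma_{\min}\geq c$ gives no global lower Lipschitz bound, since the image need not be convex. As written, the argument is therefore circular. Argue through the frozen-time inverse instead:
\begin{itemize}
\item[(a)] $Z_t=Z(s;Y(s;Z_t))$ and $|Y(s;Z_t)-x|=|Y(s;Z_t)-Y(t;Z_t)|\lesssim|t-s|^\alpha$;
\item[(b)] $Z(s;\cdot)$ is $c^{-1}$-Lipschitz on every ball contained in the open image of $Y(s;\cdot)$, because $\|\nabla^\sft Z(s;\cdot)\|\leq c^{-1}$.
\end{itemize}
Hence $|Z_t-Z_s|\leq c^{-1}|Y(s;Z_t)-x|\lesssim|t-s|^\alpha$ for $|t-s|$ small, with no continuity of $t\mapsto Z_t$ assumed.

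Your loss of exponent is not an artefact of the method: the lemma as literally stated, with the same $\gamma$, fails for $\gamma>3\alpha-1$. Take $d=m=1$, $|x|<1$, $t$ small enough that $|W_t|\leq\frac14$, and $Y(t;x)=x+x^2W_t$. The hypothesis holds with $A=x^2$, $A'=B=0$ and zero remainder, hence for every $\gamma$. However, $Z(t;x)=x-x^2W_t+2x^3W_t^2-5x^4W_t^3+\oO(W_t^4)$. At $s=0$ one has $C=-x^2$, $C'=4x^3$ and $D=0$, so the remainder is $-5x^4W_t^3+\oO(W_t^4)$ for $x\neq0$. This is not $\oO(t^{1+\gamma})$ when $1+\gamma>3\alpha$ and $|W_t|\gtrsim t^\alpha$ along a sequence $t\to0$. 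So $\min(\gamma,3\alpha-1)$ is the correct exponent. This is harmless for the paper, which only uses the lemma with some $\gamma>0$.

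Similarly, your use of $\Sym(\WW_{s,t})=\frac12\,\delta W\otimes\delta W$ is genuinely necessary. For non-geometric $\W$ the bracket would add a drift not of the form $D\,(t-s)$. The lemma therefore tacitly assumes $\W\in\emC_g^\alpha$, which holds throughout the paper.
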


\section{Maximum principle with time-dependent domains}
\begin{lem}\label{lem:moving_domain_maximum_principle}
    Let $\widetilde\oO\subset\RR^d$ be a bounded domain and suppose $F:[0,T]\times\overline{\widetilde{\oO}}\to\RR^d$ is continuous and satisfies that $F(t,\cdot)$ is injective for all $t\in[0,T]$, as well as 
    \begin{equation*}
        \det(\nabla_x^\sft F(t,x)) \neq  0
    \end{equation*}
    for all $t\in(0,T)$ and $x\in\widetilde\oO$. Let $\oO_t:= F(t,\widetilde{\oO})$ and
    \begin{equation*}
        \sS:=\bigcup_{t\in(0,T)}\{t\}\times\oO_t\;.
    \end{equation*}
    Let $w\in\cC(\overline{\sS})\cap \cC^{1,2}(\sS)$ satisfy
    \begin{equation*}
        \d_t w(t,x) = \sum_{i,j=1}^d a_{ij}(t,x) \d_{ij} w(t,x) + \sum_{i=1}^d b_i(t,x)\d_i w(t,x) + c(t,x)w(t,x)\;,\quad t\in(0,T)\;, x\in\oO_t\;,
    \end{equation*}
    where \(a=(a_{ij})\) is positive definite in \(\sS\), and the coefficients \(a_{ij}\), \(b_i\), and \(c\) are bounded. Assume moreover that $w(t,x) = 0$ for $t=0, x\in \overline{\oO_0}$ and $t\in(0,T], x\in \partial \oO_t$. Then $w(t,x)=0$ for all $(t,x)\in\overline{\sS}$.
\end{lem}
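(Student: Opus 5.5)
The plan is to pull the problem back to the fixed cylinder $(0,T)\times\widetilde\oO$ by the change of variables $y\mapsto F(t,y)$, and then run the classical weak maximum principle argument on the pulled-back function. Since $F(t,\cdot)$ is a continuous injection on $\overline{\widetilde\oO}$, invariance of domain makes it a homeomorphism onto its image, so $F(t,\partial\widetilde\oO)=\partial\oO_t$. Hence the parabolic boundary of $\sS$ is $\{0\}\times\overline{\oO_0}$ together with $\bigcup_t\{t\}\times\partial\oO_t$, and $w$ vanishes there by hypothesis. It suffices to prove $w\le 0$ on $\overline{\sS}$; applying the same argument to $-w$, which solves the same linear equation, gives $w\ge 0$.

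First we remove the zeroth-order term. Let $C:=\sup|c|$ and $\widetilde w:=e^{-(C+1)t}w$. Then $\widetilde w$ satisfies the same equation with $c$ replaced by $c-(C+1)\le -1<0$, and it has the same zero boundary and initial data. Now suppose $\sup_{\overline{\sS}}\widetilde w>0$, and fix $T'<T$ for which the supremum over $\overline{\sS}\cap\{t\le T'\}$ is still positive. We need compactness of $\overline{\sS}\cap\{t\le T'\}$ and the fact that this supremum is attained. Both hold because $\widetilde w\in\cC(\overline{\sS})$ and the set is closed and bounded: $\overline{\sS}$ is the image of the compact set $[0,T]\times\overline{\widetilde\oO}$ under the continuous map $(t,y)\mapsto(t,F(t,y))$.

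The maximiser $(t_0,x_0)$ cannot lie on the parabolic boundary, where $\widetilde w=0$. So $t_0\in(0,T']$ and $x_0\in\oO_{t_0}$ is a spatial interior point. At this point $\nabla\widetilde w=0$ and $\nabla^2\widetilde w\le 0$, and positive definiteness of $a$ gives $a:\nabla^2\widetilde w\le 0$. It remains to show $\d_t\widetilde w(t_0,x_0)\ge 0$. This is the main obstacle, because the domain moves: we need $(t_0-h,x_0)\in\sS$ for small $h>0$. To get this, write $x_0=F(t_0,y_0)$ with $y_0\in\widetilde\oO$. The maps $F(t,\cdot)$ are continuous injections that depend continuously on $t$, so a degree argument on a small ball $B(y_0,r)\Subset\widetilde\oO$ applies. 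Since $\dist(x_0,F(t_0,\d B))>0$ and $F(t,\cdot)\to F(t_0,\cdot)$ uniformly on $\overline{B}$, the degree $\deg(F(t,\cdot),B,x_0)$ stays equal to $\pm1$ for $t$ close to $t_0$. Hence $x_0\in F(t,B)\subset\oO_t$ for those $t$. The same argument shows that $\sS$ is relatively open near $(t_0,x_0)$, so $\widetilde w$ is $\cC^{1,2}$ there.

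With $(t_0-h,x_0)\in\sS$ for small $h$, maximality gives $\d_t\widetilde w(t_0,x_0)\ge0$; if $t_0=T'$ this is a one-sided derivative, which is still fine. Evaluating the equation at $(t_0,x_0)$ then yields
\[
0\le\d_t\widetilde w = a:\nabla^2\widetilde w + b\cdot\nabla\widetilde w+(c-C-1)\widetilde w\le -\widetilde w(t_0,x_0)<0,
\]
a contradiction. Therefore $\widetilde w\le0$ on $\overline{\sS}\cap\{t\le T'\}$ for every $T'<T$. By continuity this extends to $\overline{\sS}$, and symmetry gives $w\equiv 0$.

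The non-vanishing Jacobian hypothesis is not strictly needed in this argument; the degree argument uses only injectivity. One could instead use it to apply the inverse function theorem and obtain local openness, which is an alternative way to handle the step $(t_0-h,x_0)\in\sS$.
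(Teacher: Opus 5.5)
Your proof is correct and uses the same argument as the paper: replace $w$ by $e^{-Ct}w$ to make the zeroth-order coefficient negative, then derive a contradiction at a positive maximum. You also supply the details the paper leaves implicit, namely compactness of $\overline{\sS}$ as the image of $[0,T]\times\overline{\widetilde\oO}$, the identity $F(t,\d\widetilde\oO)=\d\oO_t$ via invariance of domain, the truncation at $T'<T$, and the fact that $(t_0-h,x_0)\in\sS$ for small $h>0$; despite your opening sentence you never actually pull the equation back to the fixed cylinder (that would need $\d_t F$, which is not assumed), and openness of $\sS$ also follows in one line from invariance of domain applied to $(t,y)\mapsto(t,F(t,y))$ on $(0,T)\times\widetilde\oO\subset\RR^{d+1}$.
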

\begin{proof}
    It suffices to show that $w\leq 0$. Without loss of generality, we assume that $c<0$ (or we can consider $e^{-Ct} w$ for a large $C$). Considering the maximum point of $w$ gives the desired result.
\end{proof}

\endappendix

\bibliographystyle{Martin}
\bibliography{Refs}
\end{document}